\newtheorem{theorem}{Theorem}
\newtheorem{lemma}{Lemma}
\newtheorem{corollary}{Corollary}
\newtheorem{proposition}{Proposition}
\theoremstyle{conjecture}
\theoremstyle{definition}
\newcommand{\twotwo}[4]{\left(\begin{array}{cc}#1&#2\\&\\#3&#4\end{array}\right)}
\theoremstyle{remark}
\newtheorem*{remark}{Remark}
\theoremstyle{remarks}
\newtheorem*{remarks}{Remarks}
\theoremstyle{example}
\numberwithin{equation}{section}
\begin{document}
\title{Loops in $SL(2,\mathbb C)$ and Root Subgroup Factorization}

\author{Estelle Basor and Doug Pickrell}
\email{ebasor@aimath.org}
\email{pickrell@math.arizona.edu}

\begin{abstract} In previous work we proved that for a $SU(2,\mathbb C)$ valued loop
having the critical degree of smoothness (one half of a derivative in the $L^2$ Sobolev sense),
the following are equivalent: (1) the Toeplitz and shifted Toeplitz operators associated to the loop
are invertible, (2) the loop has a triangular factorization, and (3) the loop has a root subgroup
factorization. For a loop $g$ satisfying these conditions, the Toeplitz determinant
$det(A(g)A(g^{-1}))$ and shifted Toeplitz determinant $det(A_1(g)A_1(g^{-1}))$ factor as products in
root subgroup coordinates. In this paper we observe that, at least in broad outline, there is a relatively
simple generalization to loops having values in $SL(2,\mathbb C)$. The main novel features are that (1) root subgroup coordinates
are now rational functions, i.e. there is an exceptional set, and (2) the non-compactness of $SL(2,\mathbb C)$ entails that
loops are no longer automatically bounded, and this (together with the exceptional set) complicates the analysis at the critical exponent.

\end{abstract}
\maketitle

\setcounter{section}{-1}

\section{Introduction}\label{Introduction}

In \cite{Pi3} and \cite{BP} we initiated the study of root subgroup factorization for loops in the compact group
$SU(2,\mathbb C)$. The purpose of this paper is to show that there is a simple way to extend the broad outline of
this theory to loops in the complex group $SL(2,\mathbb C)$. The basic idea is the following. In the unitary case one is considering a loop which satisfies $kk^*=1$. We replace this with $g(g^{-*})^*=1$ (where $g^{-*}$ is shorthand for $(g^{-1})^*$), and in place of a single condition on $k$, we introduce a pair of conditions on $g$ and $g^{-*}$.

In this introduction we will mainly focus on `the classical case', i.e. the unit disk, which for unitary loops is the context of \cite{Pi3}. In Section \ref{BPcase} we will consider `the general case', in which the disk is replaced by a compact Riemann surface with boundary, which is the context of \cite{BP}.

Let $L_{fin}SL(2,\mathbb C)$ denote the group
consisting of functions $S^1 \to SL(2,\mathbb C)$ having finite Fourier series, with pointwise
multiplication. For example suppose that $\zeta=(\zeta^-,\zeta^+) \in \mathbb C^2$, $\zeta^-\zeta^+\ne 1$, $n\in
\mathbb N$, and choose a square root
$$\mathbf a(\zeta):= (1-\zeta^-\zeta^+)^{-1/2}$$
Then the function
$$ S^1 \to SL(2,\mathbb C):z \to
\mathbf a(\zeta) \left(\begin{matrix} 1&
\zeta^- z^{-n}\\
\zeta^+ z^n&1\end{matrix} \right)$$
is in $L_{fin}SL(2,\mathbb C)$.
It is known that $L_{fin}SL(2,\mathbb C)$ is dense in
$C^{\infty}(S^1,SL(2,\mathbb C))$ (by modifying the proof of Proposition 3.5.3 of \cite{PS}).
Also, if
$f(z)=\sum f_n z^n$, let $f^*(z)=\sum \bar f_n z^{-n}$. If $f \in
H^0(\Delta)$, then $f^* \in H^0(\Delta^*)$, where $\Delta$ is the
open unit disk, $\Delta^*$ is the open unit disk at $\infty$, and
$H^0(U)$ denotes the space of holomorphic functions for a domain
$U$.

\begin{theorem}\label{SU(2)theorem1} Suppose that $g_1 \in L_{fin}SL(2,\mathbb C)$. Consider the following three conditions:

(I.1) $g_1$ is of the form
$$g_1(z)=\left(\begin{matrix} a_1(z)&b_1(z)\\
c_1^*(z)&d_1^*(z)\end{matrix} \right),\quad z\in S^1,$$ where $a_1,b_1,c_1$ and
$d_1$ are polynomials in $z$, and $a_1(0)=d_1^*(\infty)\ne 0$.

(I.2) $g_1$ has a factorization of the form
$$g_1(z)=\mathbf a(\eta_n)\left(\begin{matrix} 1& \eta^+_nz^n\\
\eta^-_nz^{-n}&1\end{matrix} \right)..\mathbf a(\eta_0)\left(\begin{matrix} 1&
\eta^+_0\\
\eta^-_0&1\end{matrix} \right),$$ for some finite subset
$\{\eta_0,..,\eta_n\} \subset \{\eta\in\mathbb C^2:1-\eta^-\eta^+\ne 0\}$.

(I.3) $g_1$ and $g_1^{-*}$ have triangular factorizations of the form
$$\left(\begin{matrix} 1&0\\
\sum_{j=0}^n \bar y_jz^{-j}&1\end{matrix} \right)\left(\begin{matrix} \mathbf a_1&0\\
0&\mathbf a_1^{-1}\end{matrix} \right)\left(\begin{matrix} \alpha_1 (z)&\beta_1 (z)\\
\gamma_1 (z)&\delta_1 (z)\end{matrix} \right),$$ where
the third factor is a polynomial in $z$ which is unipotent upper
triangular at $z=0$, and $\mathbf a_1$ is a nonzero constant.

(I.1) and (I.3) are equivalent. (I.2) implies (I.1) and (I.3). The converse holds generically, in the sense
that the $\eta$ variables are rational functions of the Laurent coefficients
for $b_1/a_1$ and $c_1^*/d_1^*$.

Similarly, for $g_2 \in L_{fin}SL(2,\mathbb C)$, consider the following three conditions:

(II.1) $g_2$ is of the form
$$g_2(z)=\left(\begin{matrix} a^*_2(z)&b^*_2(z)\\
c_2(z)&d_2(z)\end{matrix} \right),\quad z\in S^1,$$ where $a_2,b_2,c_2$ and $d_2$ are polynomials
in $z$, $c_2(0)=b_2(0)=0$, and $a_2^*(\infty)=d_2(0)\ne 0$.

(II.2) $g_2$ has a factorization of the form
$$g_2(z)=\mathbf a(\zeta_n)\left(\begin{matrix} 1&\zeta_n^-z^{-n}\\
\zeta_n^+z^n&1\end{matrix} \right)..\mathbf a(\zeta_1)\left(\begin{matrix} 1&
\zeta_1^-z^{-1}\\
\zeta_1^+z&1\end{matrix} \right),$$ for some finite subset
$\{\zeta_1,..,\zeta_n\} \subset \{\zeta\in\mathbb C^2:1-\zeta^-\zeta^+\ne 0\}$.

(II.3) $g_2$ and $g_2^{-*}$ have triangular factorizations of the form
$$\left(\begin{matrix} 1&\sum_{j=1}^n \bar x_jz^{-j}\\
0&1\end{matrix} \right)\left(\begin{matrix}\mathbf a_2&0\\0&\mathbf a_2^{-1}\end{matrix}\right)\left(\begin{matrix} \alpha_2 (z)&\beta_2 (z)\\
\gamma_2 (z)&\delta_2 (z)\end{matrix} \right), $$ where the third factor is a polynomial in $z$ which is unipotent
upper triangular at $z=0$, and $\mathbf a_2$ is a nonzero constant.

(II.1) and (II.3) are equivalent. (II.2) implies (II.1) and (II.3). The converse holds generically, in the sense
that the $\zeta$ variables are rational functions of the Laurent coefficients for $c_2/d_2$ and $b_2^*/a_2^*$.

\end{theorem}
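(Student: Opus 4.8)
I will treat case (I) in detail; case (II) is entirely parallel, with the generator $\mathbf a(\eta)\left(\begin{matrix}1&\eta^+z^k\\\eta^-z^{-k}&1\end{matrix}\right)$ replaced by the shifted generator $\mathbf a(\zeta)\left(\begin{matrix}1&\zeta^-z^{-k}\\\zeta^+z^k&1\end{matrix}\right)$ (so that the unipotent factor in the triangular factorization becomes upper rather than lower triangular, and the running index drops to $1$), and the exponent-zero normalization $a_1(0)=d_1^*(\infty)$ replaced by $c_2(0)=b_2(0)=0$, $a_2^*(\infty)=d_2(0)$. The plan is to prove, in order: (I.2)$\Rightarrow$(I.1); (I.1)$\Leftrightarrow$(I.3); and the generic converse (I.1)$\Rightarrow$(I.2).

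For (I.2)$\Rightarrow$(I.1) I would induct on the number of factors. One checks directly that if $g$ has the matrix shape of (I.1) with $a,b,c,d$ of degree $\le k-1$ and $a(0)=d^*(\infty)$, then $\mathbf a(\eta_k)\left(\begin{matrix}1&\eta^+_kz^k\\\eta^-_kz^{-k}&1\end{matrix}\right)g$ again has this shape with degrees $\le k$; the crucial points are that $z^kc^*$ (resp.\ $z^{-k}b$) stays polynomial in $z$ (resp.\ $z^{-1}$) because $c^*$ (resp.\ $b$) has degree $\le k-1$, and that the ratio $a(0)/d^*(\infty)$ is unchanged at each stage, the prefactor $\mathbf a(\eta_k)$ cancelling. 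Since the base factor $\mathbf a(\eta_0)\left(\begin{matrix}1&\eta^+_0\\\eta^-_0&1\end{matrix}\right)$ visibly has this shape with $a(0)=d^*(\infty)=\mathbf a(\eta_0)\ne 0$, the product has form (I.1).

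For (I.1)$\Leftrightarrow$(I.3) the substantive direction is (I.1)$\Rightarrow$(I.3), which amounts to making the Birkhoff (triangular) factorization explicit. Given $g_1$ as in (I.1), I would solve for the lower-unipotent factor $\ell(z)=\sum_j\bar y_jz^{-j}$ by imposing that the $(2,1)$ entry $c_1^*-\ell a_1$ of $\left(\begin{matrix}1&0\\-\ell&1\end{matrix}\right)g_1$ be a polynomial in $z$ vanishing at $0$; this is a triangular linear system in the $\bar y_j$ with all pivots equal to $a_1(0)\ne 0$, hence uniquely and finitely solvable. Then $\det g_1=1$ forces $a_1\mid 1+b_1(c_1^*-\ell a_1)$ in $\mathbb C[z]$ (again using $a_1(0)\ne 0$), so the $(2,2)$ entry $d_1^*-\ell b_1$ is automatically a polynomial in $z$ with value $a_1(0)^{-1}$ at $0$; extracting the diagonal $\mathrm{diag}(a_1(0),a_1(0)^{-1})$ leaves a polynomial third factor unipotent upper triangular at $0$, i.e.\ the factorization of (I.3) with $\mathbf a_1=a_1(0)$. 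Since form (I.1) is preserved by $g\mapsto g^{-*}$ — one has $g_1^{-*}=\left(\begin{matrix}d_1&-c_1\\-b_1^*&a_1^*\end{matrix}\right)$ when $\det g_1=1$, and $a_1(0)=d_1^*(\infty)$ is symmetric under this involution — the same construction applied to $g_1^{-*}$ supplies the second factorization, completing (I.3). Conversely, the two factorizations of (I.3) display all four of $a_1,b_1,c_1,d_1$ as polynomials in $z$ (the first two from the factorization of $g_1$, the last two from the $(1,1)$ and $(1,2)$ entries $d_1,-c_1$ of the factorization of $g_1^{-*}$), and the diagonal constants of the two factorizations — conjugate to one another — pin down $a_1(0)=d_1^*(\infty)\ne 0$.

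For the generic converse (I.1)$\Rightarrow$(I.2) one reverses the multiplication of step one: peel off one generator at a time. Let $n$ be the largest of the degrees of $a_1,b_1,c_1,d_1$, put $\eta^+_n:=[b_1]_n/a_1(0)$ and $\eta^-_n:=\overline{[c_1]_n}/a_1(0)$ ($[\cdot]_j$ denoting the coefficient of $z^j$), and — provided $1-\eta^-_n\eta^+_n\ne 0$ — set $g_1':=\mathbf a(\eta_n)\left(\begin{matrix}1&-\eta^+_nz^n\\-\eta^-_nz^{-n}&1\end{matrix}\right)g_1$. The key verification is that $g_1'$ again has form (I.1), now with all four degrees $\le n-1$: the vanishing of the $z^{\pm n}$ coefficients in the appropriate rows of $g_1'$ follows from the definitions of $\eta^\pm_n$, the normalization $a_1(0)=\overline{d_1(0)}$, and the top-degree consequences of $\det g_1=1$ (namely $[a_1]_n\overline{d_1(0)}=[b_1]_n\overline{c_1(0)}$ and $a_1(0)\overline{[d_1]_n}=b_1(0)\overline{[c_1]_n}$), while the normalization is restored by the prefactor $\mathbf a(\eta_n)$ exactly as in step one. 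Iterating $n$ times reaches a constant matrix, necessarily $\mathbf a(\eta_0)\left(\begin{matrix}1&\eta^+_0\\\eta^-_0&1\end{matrix}\right)$, and reading the steps in reverse gives (I.2); unwinding the recursion exhibits each $\eta_k$ as a rational function of the Laurent coefficients of $b_1/a_1$ at $0$ and of $c_1^*/d_1^*$ at $\infty$, just as in the $SU(2,\mathbb C)$ computation of \cite{Pi3}. The procedure — hence the factorization (I.2) — breaks down precisely when some $1-\eta^-_k\eta^+_k$ generated along the way vanishes, equivalently when a denominator of these rational functions vanishes; this is the exceptional set, and is why the converse is only generic. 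I expect the main obstacle to be exactly this degree bookkeeping — checking that form (I.1) is preserved with the degree dropping by one, via the $\det=1$ relations among leading coefficients — together with the careful handling of the normalization conditions under $g\mapsto g^{-*}$ in (I.1)$\Leftrightarrow$(I.3); everything in case (II) then goes through by the same three steps.
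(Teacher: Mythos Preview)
Your three-step outline---induction for (I.2)$\Rightarrow$(I.1), direct construction for (I.1)$\Leftrightarrow$(I.3), and top-down peeling for the generic converse---matches the paper's overall strategy, and your degree bookkeeping for the peeling step is correct (the vanishing of the top coefficients does follow from the $z^{\pm n}$ coefficients of $\det g_1=1$, exactly as you say). Two points of divergence are worth noting.

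First, for (I.1)$\Leftrightarrow$(I.3) you give an elementary polynomial argument: solve a finite triangular linear system for $\ell$, then use $a_1(0)\ne0$ to force the $(2,2)$ entry into $\mathbb C[z]$. This is clean and correct for $L_{\mathrm{fin}}$. The paper instead proves a $C^s$ refinement (Theorem~\ref{SU(2)theorem1smooth}) by showing the block Toeplitz operator $A(g_2)$ is invertible and then solving for the triangular factor via a Hardy-space closed-range argument; the $L_{\mathrm{fin}}$ case is then read off. Your route is shorter for polynomials but does not generalize; the paper's route is what is needed for Theorem~\ref{smoothSLtheorem1}.

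Second, the theorem asserts not merely that the converse holds generically, but that the $\eta_k$ are rational functions of the Taylor coefficients of the \emph{ratios} $b_1/a_1$ and $c_1^*/d_1^*$. Your top-down peeling produces $\eta_n^{\pm}$ from the top matrix-entry coefficients $[b_1]_n,[c_1]_n$ and $a_1(0)$, which are not themselves ratio coefficients; you then defer the ratio-dependence claim to \cite{Pi3}. The paper handles this separately: after the finite peeling (which it also gives, noting explicitly that this argument ``applies exclusively to finite type loops''), it proves Theorem~\ref{solvingsubgpcoords}, a combinatorial identity expressing the Taylor coefficients of $c_2/d_2$ and $b_2^*/a_2^*$ as explicit polynomials in the $\zeta_k^{\pm}$, triangular in the index, which can be inverted from the bottom up. That bottom-up recursion is what pins the $\zeta_k$ (resp.\ $\eta_k$) to the ratio coefficients and is also what extends to the $C^\infty$ case. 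So your proposal is correct as far as it goes, but to match the full statement you would still need to supply (or cite precisely) this second, bottom-up half of the argument.
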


\begin{remark}\label{remarks1} (a) The two sets of conditions are equivalent; they are intertwined by
the outer involution $\sigma$ of $LSL(2,\mathbb C)$ given by
\begin{equation}\label{outer}\sigma(\left(\begin{matrix}a&b\\c&d\end{matrix}\right))=
\left(\begin{matrix}d&cz^{-1}\\bz&a\end{matrix}\right).\end{equation}

(b) It is easy to see that $g_i$ ($i=1$ or $2$) satisfies the conditions in parts 1 and 2 if and only if $g_i^{-*}$ satisfies the same conditions. Consequently we do not need to mention $g_i^{-*}$ explicitly in parts 1 and 2.

(c) The need to choose square roots for the $\mathbf a$ factors frustrates any attempt to formulate a uniqueness statement for the factorization in part 2. This complication can be avoided by modifying the building blocks of the factorization to not include the $\mathbf a$ factors. But this introduces other problems. This is discussed in Section \ref{concludingcomments} .

(d) Clearly the main novelty is that part 2 can fail (for non-unitary loops). To illustrate this, for $g_2$ as in $(II.2)$, one can calculate (as we will do more systematically in the text) that
$$\frac{c_2}{d_2}=\zeta_1^+z+\zeta_2^+(1-\zeta_1^-\zeta_1^+)z^2+
\left((1-\zeta_1^-\zeta_1^+)(1-\zeta_2^-\zeta_2^+)\zeta_3^+-(1-\zeta_1^-\zeta_1^+)\zeta_1^-(\zeta_2^+)^{2}\right)z^3+...$$
and
$$\frac{b_2^*}{a_2^*}=\zeta_1^-z^{-1}+\zeta_2^-(1-\zeta_1^-\zeta_1^+)z^{-2}+
\left((1-\zeta_1^-\zeta_1^+)(1-\zeta_2^-\zeta_2^+)\zeta_3^- -(1-\zeta_1^-\zeta_1^+)\zeta_1^+(\zeta_2^-)^{2}\right)z^{-3}+...$$
For $g_2$ as in (II.1), we can use these series to inductively and rationally solve for the $\zeta$ coordinates, provided that at each step $1-\zeta_k^-\zeta_k^+\ne 0$ (For a unitary $g_2$, $(1-\zeta_k^-\zeta_k^+)=(1+|\zeta_k^-|^2)>0$, hence there is no obstruction).

An example of a loop as in (II.1) which does not have a root subgroup factorization is
$$g_2=\left(\begin{matrix}1-z^{-1}+z^{-2}&z^{-1}+z^{-3}\\z-2z^2+z^3&1-z+z^2\end{matrix}\right) $$
The associated triangular factorizations are
$$g_2=\left(\begin{matrix}1&z^{-1}+z^{-2}+z^{-3}\\0&1\end{matrix}\right) \left(\begin{matrix}1+z-z^{2}&-z\\z-2z^2+z^3&1-z+z^2\end{matrix}\right) $$
and
$$g_2^{-*}=\left(\begin{matrix}1&z^{-1}+z^{-2}-z^{-3}\\0&1\end{matrix}\right)
\left(\begin{matrix}1+z+z^{2}&-z\\-z-z^3&1-z+z^2\end{matrix}\right) $$

It would be highly desirable to understand the exceptional set in some conceptual way.

\end{remark}

There is a $C^{\infty}$ analogue of Theorem \ref{SU(2)theorem1} which we will state in abbreviated form:

\begin{theorem}\label{smoothSLtheorem1}Suppose that $g_i\in C^{\infty}(S^1,SL(2,\mathbb C))$, $i=1,2$. The statements in Theorem
\ref{SU(2)theorem1} apply to $g_i$, where in parts 1 and 3 the coefficients are holomorphic functions in the disk with smooth boundary values (rather than polynomials), and in part 2  $\{\eta_i\}$ and $\{\zeta_k\}$ are rapidly decreasing
sequences of pairs of complex numbers, the roots $\mathbf a(\eta_n),\mathbf a(\zeta_n)$ are chosen to be near $1$ for large $n$, and the limits
$$g_1(z)=\lim_{n\to\infty}\mathbf a(\eta_n)\left(\begin{matrix} 1&\eta^+_nz^n\\
\eta^-_nz^{-n}&1\end{matrix} \right)..\mathbf
a(\eta_0)\left(\begin{matrix} 1&
\eta^+_0\\
\eta^-_0&1\end{matrix} \right)$$ and
$$g_2(z)=\lim_{n\to\infty}\mathbf a(\zeta_n)\left(\begin{matrix} 1&\zeta^-_nz^{-n}\\
\zeta^+_nz^n&1\end{matrix} \right)..\mathbf
a(\zeta_1)\left(\begin{matrix} 1&
\zeta^-_1z^{-1}\\
\zeta^+_1z&1\end{matrix} \right),$$ exist in
$C^{\infty}(S^1,SL(2,\mathbb C))$.

\end{theorem}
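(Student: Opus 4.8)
The plan is to reduce the $C^\infty$ statements to the $L_{fin}$ statements of Theorem \ref{SU(2)theorem1}, through (i) a convergence estimate for the infinite products in part 2, (ii) a limiting argument for the implications out of part 2, and (iii) a transcription to the smooth category of the explicit formulas governing the equivalence of parts 1 and 3 and the generic converse. In contrast to the delicate analysis at the critical exponent, the $C^\infty$ case is comparatively soft, the only genuinely new work being the control of the exceptional set. By Remark \ref{remarks1}(a) it suffices to treat $g_1$ (the assertions for $g_2$ follow by conjugating with $\sigma$), and by Remark \ref{remarks1}(b) we may suppress $g_1^{-*}$ in parts 1 and 2. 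Below, (I.1), (I.2), (I.3) always denote the smooth versions.

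I would begin with the convergence asserted in part 2. Write the $n$-th building block as $\mathbf a(\eta_n)(I+N_n)$, where $N_n(z)=\left(\begin{matrix}0&\eta_n^+z^n\\ \eta_n^-z^{-n}&0\end{matrix}\right)$ and $\mathbf a(\eta_n)$ is the square root near $1$. For $s>1/2$, $H^s(S^1)$ is a Banach algebra under pointwise multiplication, and $\|\mathbf a(\eta_n)(I+N_n)-I\|_{H^s}\le C_s(1+n)^s(|\eta_n^+|+|\eta_n^-|+|\eta_n^-\eta_n^+|)$; since $\{\eta_n\}$ is rapidly decreasing the right-hand side is summable in $n$, so the partial products are Cauchy in $H^s$ for every $s>1/2$ and hence converge in $C^\infty(S^1,SL(2,\mathbb C))$, whose topology is that of $\bigcap_{s>1/2}H^s$. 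This shows the infinite products in part 2 are well defined in the smooth category.

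For (I.2)$\Rightarrow$(I.1), (I.3): let $g_1^{(N)}$ be the $N$-th partial product, so $g_1^{(N)}\to g_1$ in $C^\infty$, and by Theorem \ref{SU(2)theorem1} each $g_1^{(N)}\in L_{fin}SL(2,\mathbb C)$ is of the form in (I.1) with polynomial entries. The Fourier support conditions (top row supported in $[0,\infty)$, bottom row in $(-\infty,0]$) pass to the $C^\infty$ limit, so $g_1$ has entries $a_1,b_1\in H^0(\Delta)$ and $c_1^*,d_1^*\in H^0(\Delta^*)$ with smooth boundary values; and $a_1(0)=\lim_N a_1^{(N)}(0)=\prod_k\mathbf a(\eta_k)$, a convergent and (since the factors tend to $1$ with $\sum|\mathbf a(\eta_k)-1|<\infty$) nonzero product. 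So (I.1) holds, and (I.3) follows once (I.1)$\Leftrightarrow$(I.3) is known; the latter I would prove exactly as in the finite case. The passage between the two forms is effected by rational operations on $a_1,b_1,c_1,d_1$ together with one scalar Birkhoff (Riemann--Hilbert) factorization of a symbol built from $\det=1$ and the relevant entries, and the operations involved --- division by a nowhere-vanishing smooth function of winding number zero, and the splitting of such a function into factors holomorphic and nonvanishing on $\Delta$ and on $\Delta^*$ --- all preserve $C^\infty$, with continuous dependence on the data. The point requiring care is that the scalar being split is genuinely nonvanishing of winding zero, which is exactly what the normalization $a_1(0)=d_1^*(\infty)\ne0$ in (I.1), respectively the prescribed shape of the factorization in (I.3), supplies.

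There remains the generic converse (I.1)$\Rightarrow$(I.2). Using the Birkhoff factorization above one reduces to $a_1$, $d_1^*$ nowhere vanishing, so $b_1/a_1\in C^\infty(S^1)$ has rapidly decreasing Taylor coefficients and $c_1^*/d_1^*\in C^\infty(S^1)$ has rapidly decreasing Laurent coefficients. Feeding these into the recursion of Remark \ref{remarks1}(d) (in the version appropriate to $g_1$), one solves rationally and successively for $\eta_0,\eta_1,\dots$, the step at stage $k$ being solvable precisely when $1-\eta_k^-\eta_k^+\ne0$; this is the exceptional set. Off it, plugging the resulting $\{\eta_k\}$ back into the product reproduces the series for $b_1/a_1$ and $c_1^*/d_1^*$, so --- once convergence is known --- the root subgroup product equals $g_1$, since a loop of the form (I.1) is determined by those two ratios up to the sign $g_1\leftrightarrow-g_1$, which is fixed by the shape of the factorization. \emph{The main obstacle} is the remaining analytic point: one must show that, off the exceptional set, $\{\eta_k\}$ is again rapidly decreasing with $\mathbf a(\eta_k)\to1$, so that step (i) applies. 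In the unitary case of \cite{Pi3} and \cite{BP} this is automatic, since there $1-\eta_k^-\eta_k^+=1+|\eta_k^-|^2\ge1$ bounds from below the denominators in the recursion; for $SL(2,\mathbb C)$ these denominators can be arbitrarily small, so one has to quantify how near the loop's coordinates come to the exceptional set and control the resulting amplification. I expect this to be handled by an induction on $k$ dominating the correction terms of the Remark \ref{remarks1}(d) recursion by the already-established rapid decay of $\eta_0,\dots,\eta_{k-1}$ together with a loop-dependent lower bound $\inf_j|1-\eta_j^-\eta_j^+|>0$ --- essentially the telescoping estimate of step (i) run in reverse.
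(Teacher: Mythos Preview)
Your overall architecture matches the paper's: prove convergence of the infinite products, deduce (I.2)$\Rightarrow$(I.1) by passing to the limit, establish (I.1)$\Leftrightarrow$(I.3) directly, and then run the rational recursion for the generic converse. Your convergence estimate and your limiting argument for (I.2)$\Rightarrow$(I.1) are fine and essentially what the paper does (via Lemma \ref{keylemma}). You are also more candid than the paper about the rapid-decrease issue in the converse; the paper simply records Theorem \ref{solvingsubgpcoords} and declares the recursion ``evident,'' without isolating the lower bound $\inf_j|1-\eta_j^-\eta_j^+|>0$ that you correctly identify as the crux.

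Where your proposal has a genuine gap is the equivalence (I.1)$\Leftrightarrow$(I.3). You assert that this is ``effected by rational operations \dots\ together with one scalar Birkhoff factorization of a symbol built from $\det=1$,'' but no such scalar reduction is available in general: the entry $a_1$ (resp.\ $d_2$) may vanish in $\Delta$ and even on $S^1$, so there is no nowhere-vanishing winding-zero scalar to split, and the determination of $y^*$ (resp.\ $x^*$) is not a division problem. The paper's argument is genuinely operator-theoretic: from (II.1) one shows that $A(g_2)$, $A_1(g_2)$ are Fredholm of index zero (since $g_2\in QC$) and have trivial kernel precisely because the pairs $(c_2,d_2)$ and $(a_2,b_2)$ have no common zeros in $\Delta$; invertibility of the Toeplitz operators then yields a triangular factorization, and the specific shape in (II.3) comes from a closed-range argument for the map $T:x^*\mapsto((c_2x^*)_-,(d_2x^*)_-)$, showing $((a_2^*)_-,b_2^*)\in\ker(T^*)^\perp=\mathrm{im}(T)$. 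The decomposing-algebra property of $C^s$ (or $C^\infty$) then puts the factors back in the right regularity class. None of this is a scalar Riemann--Hilbert step, and your sketch does not supply a substitute. Relatedly, your sentence ``one reduces to $a_1$, $d_1^*$ nowhere vanishing'' in the converse is unjustified: no such reduction has been made, and the paper works with $b_1/a_1$, $c_2/d_2$ only as formal Taylor series at the basepoint (they are explicitly called \emph{meromorphic}), not as smooth functions on $S^1$.
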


The terminology regarding triangular factorization and Toeplitz
operators in the following theorem is reviewed in Section
\ref{triangularfactorization}.

\begin{theorem}\label{U(2)theorem} (a) Suppose $g\in C^{\infty}(S^1,SL(2,\mathbb C))$. The following are
equivalent:

(i) $g$ and $g^{-*}$ have triangular factorizations $g=lmau$ (see
(\ref{factorization})), where $l$ and $u$ have $C^{\infty}$
boundary values.

(ii) $g$ has a factorization of the form
$$g(z)=g_1^*(z)\left(\begin{matrix} e^{\chi(z)}&0\\
0&e^{-\chi(z)}\end{matrix}\right)g_2(z),$$ where $\chi \in
C^{\infty}(S^1,\mathbb C)$, and $g_1$ and $g_2$ are as in Theorem \ref{smoothSLtheorem1}.

(iii) The Toeplitz operators $A(g)$ and $A(g^{-1})$ (see (\ref{multiplicationop}))
and the shifted Toeplitz operators $A_1(g)$ and $A_1(g^{-1})$ (see the paragraph
following (\ref{matrix})) are invertible.

(b) For a generic $g$ as in (a), in terms of root subgroup coordinates,
$$det(A(g)A(g^{-1}))=\left(\prod_{i=0}^{\infty}\frac{1}{(1-\eta^-_i\eta^+_i)^{i}}\right)\times
\left(\prod_{
j=1}^{\infty}e^{2j\chi_j\chi_{-j}}\right)\times
\left(\prod_{k=1}^{\infty}\frac
{1}{(1-\zeta^-_k\zeta^+_{-k})^{k}}\right)$$ and
$$det(A_1(g)A_1(g^{-1}))=\left(\prod_{i=0}^{\infty}\frac{1}{(1-\eta^-_i\eta^+_i)^{i+1}}\right)\times
\left(\prod_{
j=1}^{\infty}e^{2j\chi_j\chi_{-j}}\right)\times
\left(\prod_{k=1}^{\infty}\frac
{1}{(1-\zeta^-_k\zeta^+_k)^{k-1}}\right)$$
\end{theorem}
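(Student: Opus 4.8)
The plan is to reduce part (a) to the case of $SU(2,\mathbb C)$ loops treated in \cite{Pi3} and \cite{BP}, exploiting the substitution $kk^*=1 \leadsto g(g^{-*})^* = 1$. First I would establish the equivalence (i)$\Leftrightarrow$(iii): this is essentially the standard Gauss/triangular-factorization-versus-Toeplitz-invertibility dictionary reviewed in Section \ref{triangularfactorization}, applied to both $g$ and $g^{-*}$. The role of the two Toeplitz operators $A(g), A(g^{-1})$ and their shifted counterparts $A_1(g), A_1(g^{-1})$ is that invertibility of $A(g)$ gives the triangular factorization of $g$ with the stated block structure, invertibility of $A(g^{-1})$ (equivalently $A(g^{-*})$ after conjugating by the appropriate Hermitian structure) gives the factorization of $g^{-*}$, and the shifted operators control the diagonal/unipotent normalization at $z=0$ and $z=\infty$; smoothness of $l$ and $u$ follows from the $C^\infty$ hypothesis on $g$ by the usual regularity argument (Wiener–type estimates for the Birkhoff factorization). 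For (i)$\Leftrightarrow$(ii), the key structural input is the Birkhoff-type middle factorization: a $C^\infty$ loop $g$ with both $g$ and $g^{-*}$ admitting triangular factorization can be written as $g = g_1^* \, \mathrm{diag}(e^\chi, e^{-\chi}) \, g_2$ where $g_1$ is of "type I" and $g_2$ of "type II" in the sense of Theorem \ref{smoothSLtheorem1}; here one peels off the upper-left-holomorphic part into $g_2$ and the conjugate part into $g_1^*$, with the abelian loop $\mathrm{diag}(e^\chi,e^{-\chi})$ absorbing the winding/central degree of freedom. Then Theorem \ref{smoothSLtheorem1} identifies $g_1, g_2$ with (generic) root subgroup products, giving the coordinates $\{\eta_i\}, \{\chi_j\}, \{\zeta_k\}$.

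For part (b), the plan is to compute the two determinants by multiplicativity over the three factors. I would use the block-triangular structure together with the fact that $\det(A(gh)A((gh)^{-1}))$ transforms predictably under the group operation when one factor is "holomorphic" (triangular) — more precisely, the Toeplitz determinant is a character-like object modulo the cocycle that measures the overlap between the negative modes of one factor and the positive modes of the next, and this cocycle is exactly what produces the infinite products. For the abelian middle factor $\mathrm{diag}(e^\chi, e^{-\chi})$ the determinant $\det(A(g)A(g^{-1}))$ is the classical strong Szegő limit theorem / Kac–Achiezer formula output, giving $\prod_j e^{2j\chi_j\chi_{-j}}$ (the factor $2$ and the $\mathrm{diag}$ structure doubling the scalar Szegő exponent $\sum_j j|\chi_j|^2$ in the unitary case, here polarized to $\chi_j\chi_{-j}$). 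For the $g_1$ and $g_2$ factors I would compute the determinant contribution of a single root subgroup generator $\mathbf a(\zeta_k)\left(\begin{smallmatrix}1 & \zeta_k^- z^{-k} \\ \zeta_k^+ z^k & 1\end{smallmatrix}\right)$ and then show the contributions multiply with the power $k$ (resp. $i$, $i+1$, $k-1$) arising from the shift/degree bookkeeping — the exponents differ between the unshifted and shifted determinants precisely because $A_1$ removes one dimension from the relevant kernel/cokernel at each level. This is the $SL(2,\mathbb C)$ analogue of the computation in \cite{Pi3}, \cite{BP}, with $(1+|\zeta_k^-|^2)$ replaced by $(1-\zeta_k^-\zeta_k^+)$.

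The main obstacle I anticipate is \emph{not} the formal/algebraic identity but the analytic issue flagged in the abstract: at the critical smoothness exponent (and even in locating the precise $C^\infty$ statement) one must ensure that the infinite products actually converge and that the factorizations exist, which fails on the exceptional set where some $1-\zeta_k^-\zeta_k^+$ vanishes or where the non-compactness of $SL(2,\mathbb C)$ allows the partial products to blow up. Here, however, the hypothesis is $C^\infty$ and "generic", so convergence of $\prod (1-\eta_i^-\eta_i^+)^{-i}$ etc. follows from the rapid decrease of the root subgroup coordinates (Theorem \ref{smoothSLtheorem1}) — one just needs the elementary estimate that $\log(1-\eta_i^-\eta_i^+)^{-i} = O(i \cdot |\eta_i|^2)$ is summable. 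The genuinely delicate point is verifying that the "generic" hypothesis of part (b) is compatible with — indeed implied by — the invertibility hypotheses of part (a), i.e. that one does not accidentally land on the exceptional set; I would handle this by showing the exceptional set is a proper analytic subvariety (zero locus of the relevant resultants in the Laurent coefficients) and that the three conditions in (a) are preserved under the complexified torus action, so that a generic point of each stratum has a root subgroup factorization, and then invoke continuity of the determinants to pin down the formula everywhere it makes sense.
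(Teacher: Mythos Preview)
Your overall architecture matches the paper's: (i)$\Leftrightarrow$(iii) via the standard Toeplitz/triangular-factorization dictionary of Section~\ref{triangularfactorization}, (ii)$\Rightarrow$(i) by explicitly writing down the triangular factorization of $g_1^*\,\mathrm{diag}(e^\chi,e^{-\chi})\,g_2$, and part~(b) via the multiplicative splitting $\det(A(g)A(g^{-1}))=\det(A(g_1)A(g_1^{-1}))\det(A(\lambda)A(\lambda^{-1}))\det(A(g_2)A(g_2^{-1}))$ combined with the Szeg\H{o} formula for the abelian factor and analytic continuation from the unitary case (Theorem~\ref{det1}) for the outer factors.

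Where your sketch is thin is the direction (i)$\Rightarrow$(ii). ``Peeling off the upper-left-holomorphic part'' is not a proof, and in fact the paper's mechanism is rather specific: one reads off the \emph{first column} of $l(g)$ and $l(g^{-*})$ and the \emph{second row} of $u(g)$ and $u(g^{-*})$, obtaining four relations like $(l_{11},l_{21})=e^{\chi_-}(\alpha_1^*,\beta_1^*)$ and $(u_{21},u_{22})=e^{-\chi_+}(\gamma_2,\delta_2)$. From these one forms combinations such as $l_{11}l'_{11}-l_{21}l'_{21}$ which collapse (via the $SL(2)$ determinant) to pure exponentials, thereby isolating $\chi_\pm$; the entries of $g_1,g_2$ then follow. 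A consistency condition must be verified using $g(g^{-*})^*=1$. Without identifying this column/row extraction, you have not actually produced $g_1$, $g_2$, $\chi$ from the data in~(i). Separately, your final paragraph misreads the role of ``generic'': the paper does \emph{not} claim, and it is false, that the conditions in~(a) force the existence of root subgroup coordinates (Remark~(d) after Theorem~\ref{SU(2)theorem1} gives an explicit $g_2$ satisfying (II.1) and (II.3) with no root subgroup factorization). ``Generic'' in part~(b) is a genuine extra hypothesis, not something to be deduced from~(a), so your proposed continuity/analytic-subvariety argument to ``pin down the formula everywhere'' is aimed at a claim the theorem does not make.
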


\begin{remarks} (a) Again note that in part (a), each of the conditions (i)-(iii) holds for $g$ if and only if the
condition holds for $g^{-*}$.

(b) It is easy to generalize this statement to a loop $g$ with values in $GL(2,\mathbb C)$. The only substantial changes
are that the diagonal factor in (ii) of part (a) now has the form $diag(exp(\chi_1(z)),exp(\chi_2(z))$, and in the
part (b) $exp(2j\chi_j\chi_{-j})$ is replaced by $exp(j((\chi_1)_j(\chi_1)_{-j}+(\chi_2)_j(\chi_2)_{-j}))$.

(c) The loop group $LSL(2,\mathbb C)$ has a universal $\mathbb C^{\times}$ central extension
$$0\to \mathbb C^{\times} \to \widehat L SL(2,\mathbb C) \to LSL(2,\mathbb C) \to 0$$
As explained originally in (section 11.3 of) \cite{PS}, one can interpret $\sigma_0=det(A)$ as the essentially unique holomorphic function on $\widehat L SL(2,\mathbb C)$ of level one which is well-defined on the double coset space
$$H^0(D^*,SL(2,\mathbb C))\backslash \widehat L SL(2,\mathbb C)/H^0(D,SL(2,\mathbb C)) $$
There are product formulas for $\sigma_0$ (and also $\sigma_1=det(A_1)$) as in (b) (this is developed for unitary loops in \cite{PP}).
Observe that $\sigma_i(\widehat g)$ is a holomorphic function of level $1$, $\sigma_i(\widehat g^{-1})$ is a holomorphic function of level $-1$, and $\sigma_i(\widehat g)\sigma_i(\widehat g^{-1})$ is of level $0$, i.e. an ordinary function on the loop group.
\end{remarks}

\subsection{Outline of the paper}

Section \ref{triangularfactorization} is a review of standard facts about
triangular factorization. This is an abbreviated version of Section 1 in \cite{Pi3}.

In Sections \ref{SU(2)case} and \ref{SU(2)caseII}, we prove
Theorems \ref{SU(2)theorem1} and \ref{U(2)theorem}, respectively.
In these two sections we also extend the
equivalences to Holder type function spaces. Unlike the unitary case, it is presently unclear how to formulate
results for the
critical Sobolev space $W^{1/2,L^2}$, and there is the appearance of an exceptional set, because the coordinates
$\eta$ and $\zeta$ are rational.

In section \ref{BPcase} we consider `the general case', i.e. we replace
the disk with a compact Riemann surface with boundary.

In an appendix we briefly discuss a possible alternate formulation which avoids the problem of having
to choose square roots of the factors $(1-\zeta^-\zeta^+)$.  This has the drawback of not directly generalizing the unitary case.
There also still remains an exceptional set.

\subsection{Notation} Sobolev spaces will be denoted by $W^s$, and will
always be understood in the $L^2$ sense. The space of sequences
satisfying $\sum_{n=1}^{\infty} n\vert \zeta_n\vert^2<\infty$ will
be denoted by $w^{1/2}$.

We will use \cite{Pe} as a general reference for Hankel and
Toeplitz operators.

We generally use bold letters for scalars.

\section{Triangular factorization for $LSL(2,\mathbb
C)$}\label{triangularfactorization}

The purpose of this section is to recall some simple facts about triangular factorization. This is an abbreviated
version of Section 1 from \cite{Pi3}, where proofs and further references can be found.

Suppose that $g\in L^1(S^1,SL(2,\mathbb C))$. A triangular
factorization of $g$ is a factorization of the form
\begin{equation}\label{factorization}g=l(g)m(g)a(g)u(g),\end{equation}
where
\[l=\left(\begin{array}{cc}
l_{11}&l_{12}\\
l_{21}&l_{22}\end{array} \right)\in H^0(\Delta^{*},SL(2,{\mathbb
C})),\quad l(\infty )=\left(\begin{array}{cc}
1&0\\
l_{21}(\infty )&1\end{array} \right),\] $l$ has a $L^2$ radial
limit, $m=\left(\begin{array}{cc}
m_0&0\\
0&m_0^{-1}\end{array} \right)$, $m_0\in S^1$,
$a(g)=\left(\begin{array}{cc}
a_0&0\\
0&a_0^{-1}\end{array} \right)$, $a_0>0$,
\[u=\left(\begin{array}{cc}
u_{11}&u_{12}\\
u_{21}&u_{22}\end{array} \right)\in H^0(\Delta ,SL(2,{\mathbb
C})),\quad u(0)=\left(\begin{array}{cc}
1&u_{12}(0)\\
0&1\end{array} \right),\] and $u$ has a $L^2$ radial limit. Note
that (\ref{factorization}) is an equality of measurable functions
on $S^1$. A Birkhoff (or Wiener-Hopf, or Riemann-Hilbert)
factorization is a factorization of the form $g=g_-g_0g_+$, where
$g_-\in H^0(\Delta^*,\infty;SL(2,\mathbb C),1)$, $g_0\in
SL(2,\mathbb C)$, $g_+\in H^0(\Delta,0;SL(2,\mathbb C),1)$, and
$g_{\pm}$ have $L^2$ radial limits on $S^1$. Clearly $g$ has a
triangular factorization if and only if $g$ has a Birkhoff
factorization and $g_0$ has a triangular factorization, in the
usual sense of matrices.

Note that $g$ has a triangular factorization, $g=lmau$, if and only if $g^{*}$ has a triangular factorization,
$g^*=u^*m^*al^*$.
\begin{proposition}Birkhoff and triangular factorizations are
unique. \end{proposition}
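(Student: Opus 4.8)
The plan is to prove uniqueness of the Birkhoff factorization first, and then deduce uniqueness of the triangular factorization from the corresponding (standard) uniqueness statement for matrices. Suppose $g = g_- g_0 g_+ = \tilde g_- \tilde g_0 \tilde g_+$ are two Birkhoff factorizations. Rearranging, I get the identity
$$\tilde g_-^{-1} g_- = \tilde g_0 \tilde g_+ g_+^{-1} g_0^{-1}$$
as an equality of $SL(2,\mathbb C)$-valued measurable functions on $S^1$. The left-hand side extends holomorphically to $\Delta^*$ (including $\infty$), since both $g_-$ and $\tilde g_-$ lie in $H^0(\Delta^*,\infty;SL(2,\mathbb C),1)$ and $SL(2,\mathbb C)$ is a group; the right-hand side extends holomorphically to $\Delta$. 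The main analytic point is that both sides have $L^2$ radial limits agreeing a.e.\ on $S^1$, so each matrix entry is a function holomorphic on $\Delta$ and on $\Delta^*$ with matching $L^1$ (indeed $L^2$) boundary values — hence, by the standard Morera/edge-of-the-wedge argument (or: its Fourier coefficients vanish in both the positive and negative ranges except the constant term), each entry is a constant. Call the common constant matrix $c \in SL(2,\mathbb C)$.

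Now I use the normalizations. From $\tilde g_-^{-1} g_- = c$ and the fact that $g_-(\infty) = \tilde g_-(\infty) = \left(\begin{matrix}1&0\\ *&1\end{matrix}\right)$ — wait, in the Birkhoff setting the normalization is $g_-(\infty) = 1$ and $g_+(0) = 1$ — evaluating the left side at $\infty$ gives $c = \tilde g_-(\infty)^{-1} g_-(\infty) = 1$. Hence $g_- = \tilde g_-$, and then from $\tilde g_0 \tilde g_+ = g_0 g_+$ together with $g_+(0) = \tilde g_+(0) = 1$ one evaluates at $0$ to get $\tilde g_0 = g_0$, and therefore $\tilde g_+ = g_+$. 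This establishes uniqueness of the Birkhoff factorization.

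For the triangular factorization $g = lmau$, I invoke the observation already recorded in the text: $g$ has a triangular factorization iff it has a Birkhoff factorization $g = g_- g_0 g_+$ in which $g_0 \in SL(2,\mathbb C)$ itself admits a triangular (i.e.\ $LDU$-type, lower-unipotent times diagonal-unitary times positive-diagonal times upper-unipotent) factorization. Given two triangular factorizations $g = lmau = \tilde l \tilde m \tilde a \tilde u$, grouping $g_- = l$, $g_0 = ma$, $g_+ = u$ exhibits each as a Birkhoff factorization (the normalizations $l(\infty)$ lower-unipotent, $u(0)$ upper-unipotent are compatible after absorbing — here one must be slightly careful: $l(\infty)$ is lower-unipotent rather than $1$, so I first factor $l = l' \cdot l(\infty)$ with $l'(\infty) = 1$, and similarly for $u$, pushing the unipotent pieces into the middle factor, then apply the Birkhoff uniqueness just proved). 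This forces $l = \tilde l$, $u = \tilde u$, and $ma = \tilde m \tilde a$ as constant matrices; finally uniqueness of the classical matrix triangular factorization (the middle factor is lower-unipotent $\cdot\, m_0 \cdot a_0 \cdot$ upper-unipotent with $m_0 \in S^1$, $a_0 > 0$, and such a decomposition of a fixed matrix in $SL(2,\mathbb C)$ is unique when it exists) gives $m = \tilde m$ and $a = \tilde a$.

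The step I expect to be the main obstacle — or at least the one requiring the most care — is the boundary-value argument showing that a function holomorphic on $\Delta$ and on $\Delta^*$ whose $L^2$ radial limits agree a.e.\ must be constant. This is where the hypothesis that $l, u$ (and $g_\pm$) possess $L^2$ radial limits is essential: without an integrability hypothesis on the boundary values one cannot conclude the two one-sided holomorphic extensions glue to an entire (hence constant, by the behavior at $\infty$) function. Concretely, I would phrase it via Fourier coefficients: writing $h$ for a matrix entry of $\tilde g_-^{-1}g_-$, the $\Delta^*$-side forces $\hat h(n) = 0$ for $n > 0$ and the $\Delta$-side forces $\hat h(n) = 0$ for $n < 0$, leaving $h$ constant; the only delicacy is justifying that products and inverses of $H^0$ functions with $L^2$ boundary values again have boundary values computed by the naive pointwise formula, which follows since the radial limits exist a.e.\ and $SL(2,\mathbb C)$ operations are continuous. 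Everything else is bookkeeping with the normalizations.
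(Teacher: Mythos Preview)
The paper does not actually give a proof of this proposition; it is stated as a recalled fact, with the proofs deferred to \cite{Pi3}. Your argument is the standard one and is correct: reduce to a Liouville-type statement by matching a function in the Hardy class of $\Delta^*$ with one in the Hardy class of $\Delta$ along $S^1$, then conclude via Fourier coefficients that it is constant; finally use uniqueness of the matrix $LDU$ decomposition (with the $m_0\in S^1$, $a_0>0$ normalization) for the constant middle factor.

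Two minor points of cleanup. First, the entries of $\tilde g_-^{-1}g_-$ are products of $H^2$ functions and hence lie in $H^1(\Delta^*)$, not $H^2$ as your parenthetical ``(indeed $L^2$)'' suggests; since you already note that $L^1$ boundary data suffices for the Fourier-coefficient argument, this does not affect the proof. Second, in the triangular step your sentence ``This forces $l=\tilde l$, $u=\tilde u$, and $ma=\tilde m\tilde a$'' compresses two steps: Birkhoff uniqueness gives only $l'=\tilde l'$, $u'=\tilde u'$, and $n_-man_+=\tilde n_-\tilde m\tilde a\tilde n_+$; it is then the uniqueness of the finite-dimensional $LDU$ factorization of this common constant matrix that yields $n_-=\tilde n_-$, $m=\tilde m$, $a=\tilde a$, $n_+=\tilde n_+$, and hence $l=l'n_-=\tilde l$ and $u=n_+u'=\tilde u$.
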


As in \cite{PS}, consider the polarized Hilbert space
\begin{equation}\label{polarization}\mathcal H:=L^2(S^1,C^2)={\mathcal H}^{+}\oplus {\mathcal
H}^{-},\end{equation} where $\mathcal H^{+}=P_{+}\mathcal H$
consists of $L^2$-boundary values of functions holomorphic in
$\Delta$. If $g\in L^{\infty}(S^1,SL(2,\mathbb C))$, we write the
bounded multiplication operator defined by $g$ on $\mathcal H$ as
\begin{equation}\label{multiplicationop}M_g=\left(\begin{array}{cc}
A(g)&B(g)\\
C(g)&D(g)\end{array} \right)\end{equation} where
$A(g)=P_{+}M_gP_{+}$ is the (block) Toeplitz operator associated
to $g$ and so on. If $g$ has the Fourier expansion $g=\sum
g_nz^n$,
$g_n=\left(\begin{matrix}a_n&b_n\\c_n&d_n\end{matrix}\right)$,
then relative to the basis for $\mathcal H$:
\begin{equation}\label{basis} ..
\epsilon_1z,\epsilon_2z,\epsilon_1,\epsilon_2,\epsilon_1z^{-1},\epsilon_2z^{-1},..\end{equation}
where $\{\epsilon_1,\epsilon_2\}$ is the standard basis for
$\mathbb C^2$, the matrix of $M_g$ is block periodic of the form
\begin{equation}\label{matrix}\begin{array}{ccccccccc}&.&.&.&.&.&.&.&\\..&a_0&b_0&a_1&b_1&\vert&a_2&b_2&..
\\..&c_0&d_0&c_1&d_1&\vert&c_{2}&d_{2}&..\\
..&a_{-1}&b_{-1}&a_0&b_0&\vert&a_1&b_1&.. \\
..&c_{-1}&d_{-1}&c_0&d_0&\vert&c_1&d_1&..\\-&-&-&-&-&-&-&-&-\\
..&a_{-2}&b_{-2}&a_{-1}&b_{-1}&\vert&a_0&b_0&..\\
..&c_{-2}&d_{-2}&c_{-1}&d_{-1}&\vert&c_0&d_0&..\\&.&.&.&.&.&.&.&
\end{array}\end{equation}
From this matrix form, it is clear that, up to equivalence, $M_g$
has just two types of ``principal minors", the matrix representing
$A(g)$, and the matrix representing the shifted Toeplitz operator
$A_1(g)$, the compression of $M_g$ to the subspace spanned by
$\{\epsilon_iz^j:i=1,2,j>0\}\cup\{\epsilon_1\}$. Relative to the
basis (\ref{basis}), the involution $\sigma$ defined by
(\ref{outer}) is equivalent to conjugation by the shift operator,
i.e. the matrix of $M_{\sigma(g)}$ is obtained from the matrix for
$M_g$ by shifting one unit along the diagonal (in either
direction: the result is the same, because $M_g$ commutes with
$M_z$, the square of the shift operator). Consequently the shifted
Toeplitz operator is equivalent to the operator $A(\sigma(g))$.

\begin{theorem}\label{factorizationthm} Suppose that $g\in L^{\infty}(S^1,SL(2,\mathbb C))$.

(a) If $A(g)$ is invertible, then $g$ has a Birkhoff
factorization, where
\begin{equation}\label{factorformula}(g_0g_+)^{-1}=[A(g)^{-1}\left(\begin{matrix}1\\0\end{matrix}\right),
A(g)^{-1}\left(\begin{matrix}0\\1\end{matrix}\right)].\end{equation}

(b) If $A(g)$ and $A_1(g)$ are invertible, then $g$ has a
triangular factorization.

\end{theorem}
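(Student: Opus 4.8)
\emph{Plan.} For part (a) the idea is to read the two columns of $(g_0g_+)^{-1}$ directly off $A(g)^{-1}$, as the asserted formula suggests. Set $h:=[\,A(g)^{-1}\epsilon_1,\ A(g)^{-1}\epsilon_2\,]$, a matrix-valued function whose columns lie in $\mathcal H^+$ (here $\epsilon_j$ also denotes the constant function with that value), and put $g_-:=gh$; since $g\in L^\infty$ and the columns of $h$ are in $L^2$, this product is in $L^2$. By construction $P_+(gh^{(j)})=A(g)h^{(j)}=\epsilon_j$, so $P_+g_-=I$, i.e. $g_-$ has only Fourier modes of degree $\le 0$ with constant term $I$; hence $g_-$ is the boundary value of a function in $H^0(\Delta^*)$ with $g_-(\infty)=I$. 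From $g\in SL(2,\mathbb C)$ we get $\det g_-=\det h$; but $\det h$ is holomorphic in $\Delta$ (a product of $H^2$ boundary values, hence in $H^1(\Delta)$) while $\det g_-$ is holomorphic in $\Delta^*$ with value $1$ at $\infty$, so this common function has only the zero Fourier mode and equals $1$. Thus $h\in H^0(\Delta,SL(2,\mathbb C))$ is invertible with $h^{-1}\in H^0(\Delta,SL(2,\mathbb C))$ (adjugate), and writing $h^{-1}=g_0g_+$ with $g_0:=h(0)^{-1}\in SL(2,\mathbb C)$, $g_+:=h(0)h^{-1}$ (so $g_+(0)=I$), we get the Birkhoff factorization $g=g_-h^{-1}=g_-g_0g_+$ with $(g_0g_+)^{-1}=h$, which is (\ref{factorformula}).

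For part (b), assume in addition that $A_1(g)$ is invertible and take the Birkhoff factorization $g=g_-g_0g_+$ from (a). Once we know $(g_0)_{11}\ne 0$, the triangular factorization follows purely formally: Gauss-decompose $g_0=l_0d_0u_0$ with $l_0$ lower unipotent, $u_0$ upper unipotent, and $d_0=\mathrm{diag}(\mathbf c,\mathbf c^{-1})$, $\mathbf c=(g_0)_{11}$; absorbing $l_0$ into $g_-$ and $u_0$ into $g_+$ preserves the normalizations at $\infty$ and $0$ (since $g_-(\infty)=g_+(0)=I$), and factoring $\mathbf c=m_0a_0$ with $|m_0|=1$, $a_0=|\mathbf c|>0$ yields $g=l\,m\,a\,u$ with $l=g_-l_0$, $m=\mathrm{diag}(m_0,m_0^{-1})$, $a=\mathrm{diag}(a_0,a_0^{-1})$, $u=u_0g_+$, exactly in the form (\ref{factorization}). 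So the crux is: $A_1(g)$ invertible forces $(g_0)_{11}\ne 0$. Let $H_1=\mathrm{span}\{\epsilon_iz^j:i=1,2,\ j>0\}\cup\{\epsilon_1\}$ be the domain of $A_1(g)$ and $P_1$ the orthogonal projection onto it; note $H_1=\{f\in\mathcal H^+:\langle f(0),\epsilon_2\rangle=0\}$ and $H_1^\perp=\mathcal H^-\oplus\mathbb C\epsilon_2$. Suppose $(g_0)_{11}=0$ and set $f:=g_+^{-1}\epsilon_1$ (the first column of $g_+^{-1}\in H^0(\Delta,SL(2,\mathbb C))$, which has $L^2$ radial limits by the adjugate formula). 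Then $f\in\mathcal H^+$, $f(0)=g_+(0)^{-1}\epsilon_1=\epsilon_1$, so $f\in H_1$ and $f\ne 0$; moreover $gf=g_-g_0g_+g_+^{-1}\epsilon_1=g_-(g_0\epsilon_1)=(g_0)_{21}\,g_-\epsilon_2$, and the second column $g_-\epsilon_2$ of $g_-$ has only Fourier modes of degree $\le 0$ with constant term $\epsilon_2$, hence lies in $H_1^\perp$. Therefore $A_1(g)f=P_1(gf)=0$, contradicting injectivity of $A_1(g)$. Hence $(g_0)_{11}\ne 0$, completing the proof.

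The only non-formal content is the Hardy-space bookkeeping in (a): that the Fourier-support statements genuinely produce holomorphic functions of the stated boundary regularity, that $g_\pm$ and their inverses have $L^2$ radial limits (for the inverses this uses $\det=1$ and the adjugate formula), and that a function simultaneously of class $H^1$ in $\Delta$ and holomorphic in $\Delta^*$ is constant. This is standard Smirnov/Hardy theory, carried out in \cite{Pi3}; nothing in the passage from $SU(2,\mathbb C)$ to $SL(2,\mathbb C)$ affects it, since at this stage $g$ is still bounded. I expect the main obstacle to be precisely the care needed to justify multiplying and composing functions that are only in $L^2$ (or $H^1$) on $S^1$ rather than bounded — everything else is algebra with the block decomposition of $M_g$.
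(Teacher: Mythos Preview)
The paper does not actually prove this theorem: Section~\ref{triangularfactorization} is explicitly ``an abbreviated version of Section 1 from \cite{Pi3}, where proofs and further references can be found,'' and Theorem~\ref{factorizationthm} is simply quoted. So there is no in-paper proof to compare against.

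Your argument is correct and is the standard one that \cite{Pi3} records. For (a), reading off the matrix $h$ with columns $A(g)^{-1}\epsilon_j$ and observing that $gh$ has only non-positive Fourier modes with constant term $I$ is exactly the route the formula (\ref{factorformula}) is designed to suggest; the Liouville-type step (that $\det h\in H^1(\Delta)$ and $\det g_-\in H^1(\Delta^*)$ agree on $S^1$, hence are the constant $1$) is the only genuinely analytic point, and you have identified it correctly. For (b), reducing the existence of a triangular factorization to the Gauss decomposability of $g_0$, and then producing the explicit kernel element $f=g_+^{-1}\epsilon_1\in H_1$ of $A_1(g)$ when $(g_0)_{11}=0$, is precisely the expected argument. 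The caveats you flag about multiplying and inverting functions that are merely $L^2$ on $S^1$ are real but routine, and are exactly the ``standard Smirnov/Hardy theory'' that \cite{Pi3} supplies; nothing in the passage from $SU(2)$ to $SL(2,\mathbb C)$ changes this, since $g$ is still assumed bounded.
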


In Theorem \ref{factorizationthm} we are assuming that $g$ is
bounded. It is not generally true that the factors $g_{\pm}$ are
bounded. Recall (see \cite{CG}) that a Banach $*$-algebra $\mathbb
A \subset L^{\infty}(S^1)$ is said to be decomposing if
$$\mathbb A = \mathbb A_+ \oplus \mathbb A_-,$$ i.e. $P_+:\mathbb A
\to \mathbb A_+$ is continuous. For example $C^s(S^1)$, provided $s>0$ and nonintegral (see page 60 of
\cite{CG}), $W^s(S^1)$, provided $s>1/2$ (Note: $W^{1/2}$ is not an algebra),
and $\mathcal{B}$, the intersection of the Wiener algebra and $W^{1/2}$, i.e. the set of functions
$\phi$ such that the Fourier coefficients satisfy
 \[\sum_{k =-\infty} ^{\infty} |\phi_{k}| +
 \Big(\sum_{k = -\infty}^{\infty} |k|\cdot|\phi_{k}|^{2}\Big)^{1/2} < \infty,
\] are all decomposing algebras.

\begin{corollary}\label{decomposing} Suppose that $g \in L^{\infty}(S^1,SL(2,\mathbb
C))$ belongs to a decomposing algebra $\mathbb A$ and has a
Birkhoff factorization. Then the factors $g_{\pm}$ belong to
$\mathbb A$.
\end{corollary}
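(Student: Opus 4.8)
The plan is to exploit the uniqueness of the Birkhoff factorization together with the explicit formula (\ref{factorformula}) from Theorem \ref{factorizationthm}(a), and then bootstrap the regularity of $g_-$ from that of $g_+$ (or vice versa) using the defining relation $g = g_- g_0 g_+$. First I would observe that, since $g$ has a Birkhoff factorization $g = g_- g_0 g_+$ by hypothesis, and since $g$ is bounded, the operator $A(g)$ is automatically invertible: indeed $A(g) = A(g_-)A(g_0)A(g_+)$ on $\mathcal H^+$ because $g_-$ is holomorphic in $\Delta^*$ and vanishing (suitably normalized) at $\infty$ so multiplication by $g_-$ preserves $\mathcal H^-$ up to the relevant triangularity, and $g_+$ is holomorphic in $\Delta$ so multiplication by $g_+$ preserves $\mathcal H^+$; each of these three factors is invertible on $\mathcal H^+$ (the triangular pieces having holomorphic inverses of the same type, $g_0$ being a constant in $SL(2,\mathbb C)$). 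Hence formula (\ref{factorformula}) applies and expresses $(g_0 g_+)^{-1}$ — equivalently the columns of $g_+^{-1}$ up to the constant $g_0^{-1}$ — in terms of $A(g)^{-1}$ applied to the constant vectors $\epsilon_1,\epsilon_2 \in \mathcal H^+$.

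Next I would argue that $g_+ \in \mathbb A$. The key point is that $A(g)$, acting on the Hardy-type space $\mathbb A_+ = P_+\mathbb A$, is a Fredholm (indeed here invertible) operator \emph{on that smaller space}: because $\mathbb A$ is a decomposing algebra, $P_+$ is bounded on $\mathbb A$, and multiplication by $g \in \mathbb A$ is bounded on $\mathbb A$, so $A(g) = P_+ M_g P_+$ restricts to a bounded operator on $\mathbb A_+^{\,2} = (\mathbb A_+)^2$. One then checks that this restriction is invertible on $(\mathbb A_+)^2$: its inverse on $\mathcal H^+$ is given by multiplication structures built from $g_-, g_0, g_+$, and one verifies that each building block maps $(\mathbb A_+)^2$ to itself once we know $g_+, g_-^{-1} \in \mathbb A$ — which is circular, so instead I would proceed more directly. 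The cleaner route: from $g = g_- g_0 g_+$ we get $g_- g_0 = g\, g_+^{-1}$, and the right-hand side, restricted to its negative-frequency part, recovers $g_-$; so it suffices to show $g_+ \in \mathbb A$, and for that I use (\ref{factorformula}), noting $A(g)^{-1} = $ (the inverse computed at the $\mathcal H^+$ level) but that this inverse in fact maps the constants $\epsilon_i$, which lie in $(\mathbb A_+)^2$, back into $(\mathbb A_+)^2$. To justify the last claim one uses that $A(g)$ on $(\mathbb A_+)^2$ is invertible because $g$ admits the Birkhoff factorization \emph{and} lies in the decomposing algebra: the standard Wiener–Hopf argument (as in \cite{CG}) shows that on a decomposing algebra, invertibility of the Toeplitz operator at the $L^2$ level plus membership of the symbol in $\mathbb A$ forces the inverse to preserve $\mathbb A_+$. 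Then $g_+ \in \mathbb A$, whence $g_+^{-1} \in \mathbb A$ as well (it is again holomorphic in $\Delta$, and $\mathbb A$ is an algebra closed under this kind of inversion since $\det = 1$), and finally $g_- = g\, g_+^{-1} g_0^{-1} \in \mathbb A$.

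The step I expect to be the main obstacle is the precise justification that the inverse of the Toeplitz operator $A(g)$ preserves the subspace $(\mathbb A_+)^2$ — i.e. that invertibility at the $L^2$ level transfers to invertibility at the $\mathbb A$ level. This is where the decomposing hypothesis does all the work, and one must either cite the corresponding scalar/matrix Wiener–Hopf result from \cite{CG} directly, or reprove it: the idea is that if $A(g)f = h$ with $h \in (\mathbb A_+)^2$ and $f \in \mathcal H^+$, then writing $g = g_- g_0 g_+$ and peeling off the factors one at a time — each peeling step being an application of a bounded projection $P_+$ on $\mathbb A$ and multiplication by an element of $\mathbb A$ (or its inverse of the appropriate holomorphic type) — shows $f \in (\mathbb A_+)^2$. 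Care is needed because $g_-^{-1}$ is holomorphic on $\Delta^*$ and need not a priori lie in $\mathbb A$; one resolves this by doing the bootstrap in the order that only ever requires multiplication by $g$, $g_0^{\pm 1}$ (a constant), and $g_+^{\pm 1}$, together with the boundedness of $P_\pm$ on $\mathbb A$, deferring any statement about $g_-$ to the very end where it follows by the formula $g_- = g g_+^{-1} g_0^{-1}$.
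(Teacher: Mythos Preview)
Your proposal is correct and follows the paper's own (one-line) argument: the paper simply says the corollary follows from the continuity of $P_+$ on $\mathbb A$ together with formula (\ref{factorformula}). You have expanded this considerably and correctly identified the one nontrivial step --- that $A(g)^{-1}$ preserves $(\mathbb A_+)^2$ --- which both you and the paper ultimately defer to \cite{CG}.
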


This follows from the continuity of $P_+$ on $\mathbb A$ and the
formula in (a) of Theorem \ref{factorizationthm}.

\begin{theorem}\label{hartmann}If $g \in L^{\infty}(S^1,SL(2,\mathbb
C))$, then $B(g)$ and $C(g)$ are compact operators if and only if
$g\in VMO$, the space of functions with vanishing mean
oscillation. If $g\in QC:=L^{\infty}\cap VMO$, then $A(g)$ and
$D(g)$ are Fredholm of index $0$.
\end{theorem}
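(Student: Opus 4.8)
This is the classical Hartman--type theorem, adapted to the $2\times 2$ block setting, so the plan is to reduce to the scalar case. First I would recall that for a scalar symbol $\phi \in L^\infty(S^1)$, the off-diagonal block $H_\phi := P_- M_\phi P_+$ (the Hankel operator) is compact precisely when $\phi \in H^\infty + C(S^1)$, and — combining the Hankel operator for $\phi$ with the one for $\bar\phi$ (which controls $P_+ M_\phi P_-$) — both off-diagonal blocks are compact exactly when $\phi \in (H^\infty + C) \cap \overline{(H^\infty+C)} = QC \oplus (\text{something})$; more precisely $\phi\in VMO$ (see the discussion in Chapter~3 of \cite{Pe}). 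Here $B(g)=P_+M_gP_-$ and $C(g)=P_-M_gP_+$ decompose entrywise: writing $g=(g_{ij})$, the operator $C(g)$ is the $2\times 2$ array of scalar Hankel operators $H_{g_{ij}}$ acting between the corresponding scalar Hardy spaces, and similarly for $B(g)$. Hence $B(g)$ and $C(g)$ are compact if and only if each scalar Hankel operator $H_{g_{ij}}$ and $H_{\overline{g_{ij}}}$ is compact, which by the scalar Hartman/Sarason theory happens exactly when each entry $g_{ij}\in VMO$, i.e. $g\in VMO$ (interpreted entrywise). That gives the first assertion.

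For the second assertion, assume $g\in QC = L^\infty \cap VMO$. By the first part, $B(g)$ and $C(g)$ are compact, so modulo compacts $M_g$ is block-diagonal: $M_g \equiv A(g)\oplus D(g)$ on $\mathcal H^+\oplus\mathcal H^-$. The same applies to $M_{g^{-1}}$, since $g^{-1}$ also lies in $QC$ (the entries of $g^{-1}$ are, up to the determinant which is $1$, polynomials in the entries of $g$, and $QC$ is an algebra — this uses $\det g \equiv 1$, so no division is needed). Therefore, modulo compacts,
\[
A(g)A(g^{-1}) \equiv A(gg^{-1}) = A(1) = I, \qquad A(g^{-1})A(g)\equiv I,
\]
and likewise $D(g)D(g^{-1})\equiv I\equiv D(g^{-1})D(g)$; so $A(g)$ and $D(g)$ are invertible modulo compacts, i.e. Fredholm. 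For the index, I would use homotopy invariance: connect $g$ to the constant loop $1$ within $QC$-loops — concretely, since $g\in SL(2,\mathbb C)$-valued one can use a path such as $g_t(z)=g(tz)\cdots$ or a Birkhoff-type deformation, the essential point being that $SL(2,\mathbb C)$ is connected and the winding-type obstruction lives in $\pi_1$, which the determinant-one condition kills. Along such a path $A(g_t)$ remains Fredholm (the essential spectrum estimate is uniform since $\|g_t\|_\infty$ and the $VMO$ seminorm can be controlled along the homotopy), so $\mathrm{ind}\,A(g) = \mathrm{ind}\,A(1) = 0$, and similarly $\mathrm{ind}\,D(g)=0$.

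The main obstacle is the index computation rather than the Fredholm property: one must exhibit an explicit homotopy through $QC$ loops (or invoke that $A$ descends to a continuous map on the relevant connected component of the loop group and that the index is a homotopy invariant) and check that Fredholmness is preserved along it. For a loop that is a limit of elements of $L_{fin}SL(2,\mathbb C)$ this is transparent — block-triangular generators and their products have index-$0$ Toeplitz blocks by direct computation — but for a general $QC$ symbol I would lean on the established scalar theory (the Fredholm index of a Toeplitz operator with $QC$ symbol equals minus the winding number of the symbol, and a continuous $SL(2,\mathbb C)$-valued function has determinant $1$, hence zero "total winding"). Everything else is bookkeeping that reduces the block statements to the scalar results collected in \cite{Pe}.
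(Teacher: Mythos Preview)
The paper does not actually prove this theorem: immediately after the statement it simply records that ``the first statement is due to Hartmann, and the second to Douglas (see pages 27 and 108 of \cite{Pe}, respectively).'' So there is no proof in the paper to compare against --- your proposal goes further than the paper itself does by attempting to sketch the underlying argument.

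Your reduction of the compactness statement to the scalar Hartman theorem (entrywise decomposition of $B(g)$ and $C(g)$ into scalar Hankel operators, then invoking that for bounded symbols both Hankel blocks are compact iff the symbol lies in $(H^\infty+C)\cap\overline{(H^\infty+C)}=QC=L^\infty\cap VMO$) is correct, and so is the Fredholm argument via $A(g)A(g^{-1})\equiv I$ modulo compacts.

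The index computation is where your sketch is genuinely incomplete, as you yourself flag. The homotopy $g_t(z)=g(tz)$ is not available for a general $QC$ symbol (such $g$ need not extend holomorphically, or even continuously, into the disk), and ``winding number'' is not a priori defined for a $QC$ function that is not continuous. Douglas's actual argument (the one cited at page 108 of \cite{Pe}) handles this by working in the Toeplitz $C^*$-algebra and using that $QC/C$ lies in the center of $L^\infty/C$, which lets one deform through invertible Calkin-algebra elements; this is more delicate than a naive homotopy of loops. Since the paper is content to cite Douglas, it would be entirely acceptable for you to do the same at this point rather than attempt an ad hoc homotopy.
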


The first statement is due to Hartmann, and the second to Douglas
(see pages 27 and 108 of \cite{Pe}, respectively).

\section{Proof of Theorem \ref{SU(2)theorem1}, and Generalizations
to Other Function Spaces}\label{SU(2)case}

\subsection{Equivalence of parts 1 and 3}

We first prove a Holder space refinement of the equivalence of parts 1 and 3
in Theorems \ref{SU(2)theorem1} and \ref{smoothSLtheorem1}.

\begin{theorem}\label{SU(2)theorem1smooth} Suppose that $g_1 \in C^s(S^1,SL(2,\mathbb C))$,
where $s>0$ and nonintegral. The following are equivalent:

(I.1) $g_1$ is of the form
$$g_1(z)=\left(\begin{matrix} a_1(z)&b_1(z)\\
c_1^*(z)&d_1^*(z)\end{matrix} \right),\quad z\in S^1,$$ where $a_1,b_1,c_1,d_1\in
H^0(\Delta)$ have $C^s$ boundary values, $a_1(0)=d_1^*(\infty)\ne 0$, and the pairs $a_1$ and $b_1$, and $c_1$ and $d_1$,
do not simultaneously vanish at a point in $\Delta$.

(I.3) $g_1$ and $g_1^{-*}$ have triangular factorizations of the form
$$g_1=\left(\begin{matrix} 1&0\\
\sum_{j=0}^{\infty}y^*_jz^{-j}&1\end{matrix} \right)\left(\begin{matrix} \mathbf a_1&0\\
0&\mathbf a_1^{-1}\end{matrix} \right)\left(\begin{matrix} \alpha_1 (z)&\beta_1 (z)\\
\gamma_1 (z)&\delta_1 (z)\end{matrix} \right)$$
and
$$g_1^{-*}=\left(\begin{matrix} 1&0\\
\sum_{j=0}^{\infty}\mathbf y^*_jz^{-j}&1\end{matrix} \right)\left(\begin{matrix} \mathbf a_1^*&0\\
0&(\mathbf a_1^*)^{-1}\end{matrix} \right)\left(\begin{matrix} \alpha'_1 (z)&\beta'_1 (z)\\
\gamma'_1 (z)&\delta'_1 (z)\end{matrix} \right),$$ where the factors
have $C^s$ boundary values, and the third factors are unipotent upper triangular at $z=0$.

Similarly, the following are equivalent:

(II.1) $g_2$ is of the form
$$g_2(z)=\left(\begin{matrix} a_2^{*}(z)&b_2^{*}(z)\\
c_2(z)&d_2(z)\end{matrix} \right),\quad z\in S^1,$$ where $a_2,b_2,c_2,d_2\in
H^0(\Delta)$ have $C^s$ boundary values, $b_2(0)=c_2(0)=0$, $a_2^*(\infty)=d_2(0)\ne 0$, and the pairs $a_2$ and $b_2$, and
$c_2$ and $d_2$, do not simultaneously vanish at a point in $\Delta$.

(II.3) $g_2$ and $g_2^{-*}$ have triangular factorizations of the form
$$g_2=\left(\begin{matrix} 1&\sum_{j=1}^{\infty}x^*_jz^{-j}\\
0&1\end{matrix} \right)\left(\begin{matrix}\mathbf a_2&0\\
0& \mathbf a_2^{-1}\end{matrix} \right)\left(\begin{matrix} \alpha_2 (z)&\beta_2 (z)\\
\gamma_2 (z)&\delta_2 (z)\end{matrix} \right) $$
and
$$g_2^{-*}=\left(\begin{matrix} 1&\sum_{j=1}^{\infty}\mathbf x^*_jz^{-j}\\
0&1\end{matrix} \right)\left(\begin{matrix}\mathbf a_2^*&0\\
0& (\mathbf a_2^*)^{-1}\end{matrix} \right)\left(\begin{matrix} \alpha'_2 (z)&\beta'_2 (z)\\
\gamma'_2 (z)&\delta'_2 (z)\end{matrix} \right), $$
where all the factors have $C^s$ boundary values, and the third factors are unipotent upper triangular at $z=0$.

\end{theorem}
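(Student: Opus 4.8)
The plan is to reduce the equivalence of (I.1) and (I.3) to a structural fact about triangular factorizations plus the invertibility of a Toeplitz operator. First, note that $g_1\in C^s(S^1,SL(2,\mathbb C))$ has the form (I.1) exactly when the first row of $g_1$ is the boundary value of a function holomorphic on $\Delta$ that is nonzero at $z=0$, the second row is anti‑holomorphic (holomorphic on $\Delta^*$), and $a_1(0)=d_1^*(\infty)$. Since $\det g_1=1$ on $S^1$, one has $(g_1^{-*})_{11}=((g_1)_{22})^*$ and $(g_1^{-*})_{12}=-((g_1)_{21})^*$; hence the second row of $g_1$ is anti‑holomorphic iff the first row of $g_1^{-*}=(g_1^{-1})^*$ is holomorphic, and the normalization $a_1(0)=d_1^*(\infty)$ says precisely that the value at $0$ of $(g_1^{-*})_{11}$ is the complex conjugate of $(g_1)_{11}(0)$. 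So (I.1) for $g_1$ amounts to: the first rows of $g_1$ and of $g_1^{-*}$ are both holomorphic and nonzero at $0$, with conjugate values at $0$. Read with the same care (the diagonal factor of the $g_1^{-*}$‑factorization being written as $\mathbf a_1^*$), (I.3) says that $g_1$ and $g_1^{-*}$ have triangular factorizations in which the lower factor $l$ is globally unipotent lower triangular (not merely at $\infty$) with the conjugate‑compatible constant diagonal factors. I will therefore prove the \emph{key claim}: for $g\in C^s(S^1,SL(2,\mathbb C))$, $g$ admits such a factorization with $C^s$ factors iff $g$ has a Birkhoff factorization and the first row of $g$ is holomorphic on $\Delta$ with $g_{11}(0)\ne 0$; in that case the constant diagonal factor is $\operatorname{diag}(g_{11}(0),g_{11}(0)^{-1})$.

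For the forward implication of the theorem it then suffices, by the key claim, to show that a loop $g_1$ of the form (I.1) has a Birkhoff factorization, for which it is enough that $A(g_1)$ be invertible (Theorem \ref{factorizationthm}(a)). I would first prove $A(g_1)$ is injective: if $A(g_1)f=0$ with $f=(f_1,f_2)^{T}\in\mathcal H^{+}$, then the first component of $g_1f$ is $a_1f_1+b_1f_2\in H^0(\Delta)$, so $P_+$ of it is itself and $a_1f_1+b_1f_2=0$; since $a_1,b_1$ have no common zero in $\Delta$ (the non‑vanishing hypothesis) and none on $S^1$ (else $\det g_1$ would vanish there), the function $h:=f_2/a_1=-f_1/b_1$ extends holomorphically across every zero of $a_1b_1$, whence $h\in H^2(\Delta)$ and $f=h(-b_1,a_1)^{T}$; the second component of $g_1f$ is then $h(a_1d_1^*-b_1c_1^*)=h$, and $P_+h=h=0$ forces $f=0$. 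Because $s>0$ we have $C^s\subset QC$, so by Theorem \ref{hartmann} $A(g_1)$ is Fredholm of index $0$; together with injectivity this gives invertibility, hence a Birkhoff factorization. The identical argument applies to $g_1^{-*}$, which is again of the form (I.1) (its entries are $d_1,-c_1,-b_1^*,a_1^*$, with the two non‑vanishing conditions merely interchanged), so both $g_1$ and $g_1^{-*}$ acquire the factorizations of (I.3); the conjugate‑compatibility of the constant diagonal factors is exactly the normalization $a_1(0)=d_1^*(\infty)$.

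To prove the key claim: given a Birkhoff factorization $g=g_-g_0g_+$ with first row of $g$ holomorphic on $\Delta$, the first row of $g_-$ equals $(\text{first row of }g)\cdot(g_0g_+)^{-1}$, which is holomorphic on $\Delta^*$ (being the first row of $g_-$) and on $\Delta$, hence — having $L^2$, indeed $C^s$ by Corollary \ref{decomposing}, boundary values — extends to a holomorphic function on $\mathbb P^1$ and equals its value $(1,0)$ at $\infty$; since $\det g_-=1$ this forces $g_-$ to be globally unipotent lower triangular. Evaluating the $(1,1)$ entry at $z=0$ (using $g_+(0)=I$) gives $(g_0)_{11}=g_{11}(0)\ne0$, so $g_0$ has a matrix triangular factorization $g_0=l_0m_0a_0u_0$, and $g=(g_-l_0)(m_0a_0)(u_0g_+)$ is a triangular factorization of the required special shape, with constant diagonal $\operatorname{diag}(g_{11}(0),g_{11}(0)^{-1})$; its $l$ and $u$ factors lie in $C^s$ because $g_\pm$ do (Corollary \ref{decomposing}). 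Conversely such a factorization is visibly a Birkhoff factorization (after absorbing the values of the unipotent factors at $\infty$ and $0$) whose first row $(\mathbf a\alpha,\mathbf a\beta)$ is holomorphic with $\alpha(0)=1$. Combining this with the previous paragraph, and matching the diagonal constants of the $g_1$‑ and $g_1^{-*}$‑factorizations via $g_1^{-*}=(g_1^{-1})^*$ and uniqueness of the Birkhoff factorization, yields (I.1)$\Leftrightarrow$(I.3). The equivalence (II.1)$\Leftrightarrow$(II.3) then follows formally: the outer involution $\sigma$ of (\ref{outer}) is an involutive automorphism of $C^s(S^1,SL(2,\mathbb C))$ which, by the shift description following (\ref{matrix}), carries a loop of the form (I.1) to one of the form (II.1) and (I.3)‑type factorizations to (II.3)‑type ones, so one applies the already‑proved case to $\sigma(g_2)$.

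I expect the main obstacle to be the invertibility of $A(g_1)$. In the compact case of \cite{Pi3} this was essentially a consequence of unitarity; here injectivity uses the non‑vanishing hypothesis of (I.1) in an essential way (and symmetrically for $g_1^{-*}$), while Fredholmness of index $0$ must be extracted from Theorem \ref{hartmann} via the inclusion $C^s\subset QC$ — in particular via the boundedness of $C^s$ loops, which is exactly the property that can fail for non‑compact‑valued loops at the critical Sobolev exponent. The remaining steps — the extension‑to‑$\mathbb P^1$ argument, the matrix triangular factorization of $g_0$, propagation of $C^s$‑regularity through Corollary \ref{decomposing}, and the precise bookkeeping of the two diagonal constants — are routine.
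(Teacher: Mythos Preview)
Your proof is correct. Both you and the paper begin by proving invertibility of the block Toeplitz operator via the same injectivity argument (exploiting that one row of $g$ is holomorphic and the determinant is $1$) combined with Fredholmness from Theorem~\ref{hartmann}, and both reduce (I) and (II) to one another via the outer involution~$\sigma$. The difference lies in how the special shape of the $l$-factor is obtained. The paper, working on the (II) side, builds the factorization explicitly: it sets $\mathbf a_2=d_2(0)^{-1}$, $\gamma_2=\mathbf a_2 c_2$, $\delta_2=\mathbf a_2 d_2$, and then solves for $x^*$ by showing that $((a_2^*)_-,b_2^*)$ lies in the range of the closed-range operator $T:x^*\mapsto((c_2x^*)_-,(d_2x^*)_-)$ via an orthogonality computation against $\ker(T^*)$. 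You instead invoke Theorem~\ref{factorizationthm}(a) to get a Birkhoff factorization $g_1=g_-g_0g_+$, then observe that the first row of $g_-$ is holomorphic on both $\Delta$ and $\Delta^*$ with $C^s$ boundary values, hence constant $(1,0)$ by Liouville, forcing $g_-$ to be globally unipotent lower triangular. This Liouville route is exactly the argument the paper deploys in its proof of the companion Theorem~\ref{SU(2)theorem4smooth}; it is shorter and bypasses the need to verify invertibility of $A_1$, while the paper's explicit construction has the advantage of giving concrete formulas for $x^*$, $\gamma_2$, $\delta_2$ that feed into the later computations with root subgroup coordinates.
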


\begin{remark} When $g_2\in L_{fin}SL(2,\mathbb C)$, the determinant condition
$a_2^*d_2-b_2^*c_2=1$ can be interpreted as an equality of finite Laurent
expansions in $\mathbb C\setminus\{0\}$. Together with $d_2(0)\ne 0$, this implies
that $c_2$ and $d_2$ do not simultaneously vanish. Thus the added
hypotheses in (I.1) and (II.1) of Theorem
\ref{SU(2)theorem1smooth} are superfluous in the finite case.

\end{remark}

\begin{proof} As we remarked in the Introduction, the two sets of
conditions are intertwined by the outer involution $\sigma$. We will consider the second set (For the sake of variety, we will consider the first set in the proof of Theorem \ref{SU(2)theorem4smooth} below).

It is straightforward to check that $(II.3)\implies (II.1)$. By multiplying the
matrices in the first displayed line in $(II.3)$, we see that $c_2=\mathbf a_2^{-1}\gamma_2$ and
$d_2=\mathbf a_2^{-1}\delta_2$, and these cannot simultaneously vanish at a
point in $\Delta$. Similarly by multiplying the matrices in the second displayed line in $(II.3)$, we see that
$a_2$ and $b_2$ are holomorphic and cannot simultaneously vanish. The unipotence of the third factors ensures that
$b_2(0)=c_2(0)=0$, and $a_2^*(\infty)=d_2(0)=\mathbf a_2^{-1}\ne 0$.

Now suppose that we are given a loop $g_2$ satisfying the
conditions in (II.1). We first claim that the Toeplitz operator for $g_2$ is invertible (For
this argument, we only need to assume that $g_2\in L^{\infty}\cap VMO$ (i.e. quasicontinuous)).
Suppose that
$$A(g_2)f=P_+(\left(\begin{matrix}a_2^*&b_{2}^*\\c_2&d_2\end{matrix}\right)
\left(\begin{matrix}f_1\\f_2\end{matrix}\right))
=\left(\begin{matrix}0\\0\end{matrix}\right).$$ Then
$c_2f_1+d_2f_2=0\in H^0(\Delta)$, and hence by the independence of $c_2$
and $d_2$ around $S^1$, $(f_1,f_2)=\lambda(d_2,-c_2)$. Because $c_2$ and
$d_2$ do not simultaneously vanish, this implies that $\lambda$ is
holomorphic in $\Delta$. We also have $(a_2^*\lambda
d_2+b_2^*\lambda(-c_2))_+=\lambda_+=0$ (here we have used the fact that $g_2$ has values in $SL(2,\mathbb C)$).
Thus $\lambda=0$. The assumption that $g_2\in C^s$ implies that $g_2$ is quasicontinuous, hence the Toeplitz operator is
Fredholm (see Theorem \ref{hartmann}).  Thus the nonvanishing of the kernel implies that the
Toeplitz operator is invertible [Note: conversely, if $c_2$ and $d_2$
have a common zero $z_0\in \Delta$, then the Toeplitz operator is
not invertible: take $\lambda=1/(z-z_0)$]. The same argument shows
that $A_1(g_2)$, and also $D(g_2)$, are invertible.

We must now show that this loop has a triangular factorization as
in (II.3), i.e. we must solve for $\mathbf a_2$, $x^*$, and so on, in
\begin{equation}\label{solve}g_2(z)=\left(\begin{matrix} a_2^{*}(z)&b_2^{*}(z)\\
c_2(z)&d_2(z)\end{matrix} \right)=\left(\begin{matrix} 1&\sum_{j=1}^n \bar x_jz^{-j}\\
0&1\end{matrix} \right)\left(\begin{matrix} \mathbf a_2&0\\
0&\mathbf a_2^{-1}\end{matrix} \right)\left(\begin{matrix} \alpha_2 (z)&\beta_2 (z)\\
\gamma_2 (z)&\delta_2 (z)\end{matrix} \right).\end{equation} The
form of the second row implies that we must have $\mathbf a_2=d_2(0)^{-1}$ ,
and
\begin{equation}\label{eqn0}\gamma_2=\mathbf a_2 c_2, \quad \text{and} \quad \delta_2=\mathbf a_2 d_2\end{equation}
because $\delta_2(0)=1$. This does define $\mathbf a_2\ne 0$, $\gamma_2$ and
$\delta_2$ in a way which is consistent with $(II.3)$, because
$c_2(0)=0$ and $d_2(0)\ne 0$.

Using (\ref{eqn0}), the first row in (\ref{solve}) is equivalent
to
\begin{equation}\label{eqn1}a_2^{*}=\mathbf a_2\alpha_2+x^{*}c_2,\quad \text{and}\quad
b_2^{*}=\mathbf a_2\beta_2+x^{*}d_2\end{equation} In the finite case, by
considering the second equation as an equality in $\mathbb C\setminus\{0\}$,
we can immediately obtain that $x^*=(b_2^*/d_2)_-$. The $C^s$ case is
more involved (in this case (\ref{eqn1}) is just an equality on the circle).

Consider the Hardy space polarization
$$H:=L^2(S^1,d\theta )=H^{+}\oplus H^{-},$$
and the operator
$$T:H^{-}\to H^{-}\oplus H^{-}:x^{*}\to (((c_2x^{*})_{-},(d_2x^{*})_{
-}).$$ The operator $T$ is the restriction of $D(g_2)^*=D(g_2^*)$
to the subspace $\{(x^*,0)\in H^-\}$, consequently it is
injective with closed image.

The adjoint of $T$ is given by
$$T^{*}:H^{-}\oplus H^{-}\to H^{-}:(f^{*},g^{*})\to c_2^{*}f^{*}+d_2^{
*}g^{*}.$$ If $(f^{*},g^{*})\in ker(T^{*})$, then
$c_2^{*}f^{*}+d_2^{*}g^{*}$ vanishes in the closure of $\Delta^{*}$ (in an a.e. sense on the boundary).
This implies
$(f^{*},g^{*})=\lambda^{*}(d_2^{*},-c_2^{*})$, where $\lambda^{*}$ is
holomorphic in $ \Delta^{*}$ (because $d_2^*$ and $c_2^*$ do not simultaneously vanish)
and vanishes at $\infty$ (because
$d_2^{*}(\infty )$ equals the conjugate of $d_2(0)$, which is nonzero). We now claim that $((a_2^{*})_{-},b_2^{*})\in
ker(T^{*})^{\perp}$: $$\int ((a_2^{*})_{-}f+b_2^{*})g)d\theta
=\int\lambda (a_2^{*}d_2-b_2^{*} c_2)d\theta =\int\lambda d\theta =0,$$
because $\lambda (0)=0$. Because $T$ has closed image, there
exists $x^{*}\in H^{-}$ such that
\begin{equation}\label{eqn2}(a_2^{*})_{-}=(x^{*}c_2)_{-},\quad and\quad
b_2^{*}=(x^{*}d_2)_{-}.\end{equation} We can now solve for $\alpha_2$
and $\beta_2$ in (\ref{eqn1}), because $\mathbf a_2=d_2(0)^{-1}$ is already determined.

This shows that $g_2$ in $(II.1)$
has a triangular factorization as in $(II.3)$. When $g_2\in C^s$,
by Corollary \ref{decomposing}, the factors are $C^s$.

We can similarly obtain a triangular factorization for $g_2^{-*}$, because
$$g^{-*}=\left(\begin{matrix}d_2^*&-c_2^*\\-b_2&a_2\end{matrix}\right)$$
satisfies the same hypotheses. This
completes the proof of Theorem \ref{SU(2)theorem1smooth}.
\end{proof}

In the preceding theorem, the hypotheses are symmetric in terms of $g_i$ and $g_i^{-*}$. It is possible to break this symmetry
and modify Theorem \ref{SU(2)theorem1smooth} to characterize $g_i$ which have triangular factorizations as in part 3.

\begin{theorem}\label{SU(2)theorem4smooth} Suppose that $g_1 \in C^s(S^1,SL(2,\mathbb C))$,
where $s>0$ and non-integral. The following are equivalent:

(I.1) $g_1$ is of the form
$$g_1(z)=\left(\begin{matrix} a_1(z)&b_1(z)\\
\tilde c_1(z)&\tilde d_1(z)\end{matrix} \right),\quad z\in S^1,$$ where $a_1,b_1\in
H^0(\Delta)$ have $C^s$ boundary values, $a_1(0)\ne 0$, and the pair $a_1$ and $b_1$
do not simultaneously vanish at a point in $\Delta$ (To emphasize, no conditions are imposed on
$\tilde c_1$ and $\tilde d_1$ beyond the fact that they are $C^s$ on $S^1$).

(I.3) $g_1$ has triangular factorization of the form
$$g_1=\left(\begin{matrix} 1&0\\
\sum_{j=0}^{\infty}y^*_jz^{-j}&1\end{matrix} \right)\left(\begin{matrix} \mathbf a_1&0\\
0&\mathbf a_1^{-1}\end{matrix} \right)\left(\begin{matrix} \alpha_1 (z)&\beta_1 (z)\\
\gamma_1 (z)&\delta_1 (z)\end{matrix} \right)$$
where the factors have $C^s$ boundary values.

Similarly, the following are equivalent:

(II.1) $g_2$ is of the form
$$g_2(z)=\left(\begin{matrix} \tilde a_2(z)&\tilde b_2(z)\\
c_2(z)&d_2(z)\end{matrix} \right),\quad z\in S^1,$$ where $c_2,d_2\in
H^0(\Delta)$ have $C^s$ boundary values, $c_2(0)=0$, $d_2(0)\ne 0$, and the pair
$c_2$ and $d_2$ does not simultaneously vanish at a point in $\Delta$.

(II.3) $g_2$ has triangular factorization of the form
$$g_2=\left(\begin{matrix} 1&\sum_{j=1}^{\infty}x^*_jz^{-j}\\
0&1\end{matrix} \right)\left(\begin{matrix}\mathbf a_2&0\\
0& \mathbf a_2^{-1}\end{matrix} \right)\left(\begin{matrix} \alpha_2 (z)&\beta_2 (z)\\
\gamma_2 (z)&\delta_2 (z)\end{matrix} \right) $$
where the factors have $C^s$ boundary values.

\end{theorem}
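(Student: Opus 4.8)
The plan is to prove the first pair of equivalences, $(I.1)\Leftrightarrow(I.3)$, and to obtain the second pair by applying the outer involution $\sigma$ of (\ref{outer}). Since $\sigma$ is an automorphism and an involution of $LSL(2,\mathbb C)$, it is enough to observe that it carries a loop with first row $(a_1,b_1)$ as in $(I.1)$ to a loop with second row $(b_1z,a_1)$, for which the hypotheses $a_1(0)\ne 0$ and ``$a_1,b_1$ have no common zero in $\Delta$'' become exactly $c_2(0)=0$, $d_2(0)\ne 0$ and ``$c_2,d_2$ have no common zero'' as in $(II.1)$; and that it carries a factorization as in $(I.3)$ to one as in $(II.3)$, the lower-triangular $l$-factor becoming upper triangular with $(1,2)$-entry a series in $z^{-j}$ for $j\ge 1$, the diagonal middle factor staying diagonal, and $\sigma(u)$ remaining holomorphic in $\Delta$ and unipotent upper triangular at $z=0$ because $u$ is. The implication $(I.3)\Rightarrow(I.1)$ is then immediate by multiplying out: $a_1=\mathbf a_1\alpha_1$ and $b_1=\mathbf a_1\beta_1$ are holomorphic in $\Delta$ with $C^s$ boundary values, $a_1(0)=\mathbf a_1\ne 0$ by unipotence of the third factor, and $\alpha_1\delta_1-\beta_1\gamma_1\equiv 1$ (from $\det g_1=1$) forbids a common zero of $a_1,b_1$ in $\Delta$.

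For $(I.1)\Rightarrow(I.3)$, the first step is to show that both the Toeplitz operator $A(g_1)$ and the shifted Toeplitz operator $A_1(g_1)$ are invertible. If $(f_1,f_2)^{t}$ is in the kernel of $A(g_1)$, then the first component $a_1f_1+b_1f_2$ of $g_1(f_1,f_2)^{t}$ lies in $H^+$ (a sum of products of functions holomorphic in $\Delta$) and also in $H^-$ (by the Toeplitz condition), hence vanishes; independence of $a_1,b_1$ on $S^1$ gives $(f_1,f_2)=\lambda(b_1,-a_1)$, and the absence of a common zero of $a_1,b_1$ in $\Delta$, with $(f_1,f_2)\in L^2$, forces $\lambda\in H^+$. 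The second component of $g_1(f_1,f_2)^{t}$ equals $\lambda(\tilde c_1b_1-\tilde d_1a_1)=-\lambda$ by $\det g_1=1$, and lies in $H^-$; combined with $\lambda\in H^+$ this yields $\lambda=0$, so $A(g_1)$ is injective. The argument for $A_1(g_1)$ is identical, with the extra bookkeeping that a kernel element has $f_2(0)=0$, hence $\lambda(0)=0$ (as $a_1(0)\ne 0$), and that the second component now lies in $H^-\oplus\mathbb C$, forcing $\lambda$ constant and therefore $0$. Since $g_1\in C^s\subset QC$, Theorem \ref{hartmann} makes $A(g_1)$ and $A_1(g_1)$ Fredholm of index $0$, so both are invertible.

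Granted this, Theorem \ref{factorizationthm}(b) gives a triangular factorization $g_1=lmau$, and Corollary \ref{decomposing} (with the decomposing algebra $C^s$) makes all factors $C^s$. It then remains to identify the special shape of the factorization. Writing $g_1u^{-1}=lma$ with $ma=\mathrm{diag}(\mathbf a_1,\mathbf a_1^{-1})$, the first row of $g_1u^{-1}$ is a sum of products of $a_1,b_1$ with entries of $u^{-1}$, hence lies in $H^+$, while it also equals $(\mathbf a_1l_{11},\mathbf a_1^{-1}l_{12})$ with $l_{11},l_{12}$ holomorphic in $\Delta^*$; a function with $L^2$ boundary values holomorphic in both $\Delta$ and $\Delta^*$ is constant, and with $l_{11}(\infty)=1$, $l_{12}(\infty)=0$ this forces $l_{11}\equiv 1$, $l_{12}\equiv 0$, whence $l_{22}\equiv 1$ by $\det l=1$. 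Thus $l=\left(\begin{matrix}1&0\\ \sum_{j\ge 0}y^*_jz^{-j}&1\end{matrix}\right)$, the middle factor is $\mathrm{diag}(\mathbf a_1,\mathbf a_1^{-1})$ with $\mathbf a_1\ne 0$ constant, and $u$ — unipotent upper triangular at $z=0$ by definition of a triangular factorization — is the third factor, which is precisely $(I.3)$. (Alternatively one can construct the factorization directly, solving for $\mathbf a_1$, the $y^*_j$ and the third factor as in the proof of Theorem \ref{SU(2)theorem1smooth}.)

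The step I expect to be the main obstacle is the invertibility of $A(g_1)$ and, especially, of $A_1(g_1)$: unlike in Theorem \ref{SU(2)theorem1smooth}, only the first row of $g_1$ is controlled, so the argument works only because the $SL(2,\mathbb C)$ relation converts the uncontrolled row into the clean identity $\tilde c_1b_1-\tilde d_1a_1=-1$, and because of the somewhat delicate constant-term bookkeeping needed to pass from $A(g_1)$ to $A_1(g_1)$. Everything after that — existence of the triangular factorization, $C^s$ regularity of the factors, and the vanishing of $l_{12}$ — is routine given the results already in hand.
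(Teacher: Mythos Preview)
Your proof is correct and follows essentially the same route as the paper. The paper delegates the Toeplitz invertibility of $A(g_1)$ and $A_1(g_1)$ to ``the second paragraph of the preceding proof'' (i.e.\ the analogous argument for $g_2$ in Theorem~\ref{SU(2)theorem1smooth}), whereas you spell it out explicitly for the first-row case; your argument is the correct adaptation. The key step---showing $l_{11}\equiv 1$ and $l_{12}\equiv 0$ by writing the first row of $g_1u^{-1}$ as simultaneously holomorphic in $\Delta$ and $\Delta^*$---is exactly what the paper does (they compute $h_1=a_1k_4-b_1k_3$ and $h_2=b_1k_1-a_1k_2$ and observe these are constants).
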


\begin{proof} We consider the equivalence of the first set of conditions. It is evident that (I.3) implies (I.1).
Suppose that $g_1$ is as in (I.1). As in the second paragraph of the preceding proof, we know that $g_1$ has
a triangular factorization. We will write the factorization as $g_1=lU$, where $U$ includes the diagonal. Since $C^s$ is a decomposing algebra, the factors are $C^s$. Let
$$l= \twotwo{h_{1}}{h_{2}}{h_{3}}{h_{4}} \text{ and  } U=\twotwo{k_{1}}{k_{2}}{k_{3}}{k_{4}} , $$
The important point is that $l$  ($U$) has values in $SL(2,\mathbb C)$ in $\Delta^*$ ($\Delta$, respectively).
This implies that

\[ h_{1} k_{1} + h_{2}k_{3} = a_{1}, \,\,\,\,\, h_{1} k_{2} + h_{2}k_{4} = b_{1}\]
and
\[ h_{1} k_{1}k_{4} + h_{2}k_{3}k_{4} = a_{1} k_{4}, \,\,\,\,\, h_{1} k_{2}k_{3} + h_{2}k_{4}k_{3} = b_{1}k_{3} \]

Subtracting and using the fact that $k_{1}k_{4} - k_{2}k_{3} = 1$, we have that $$h_{1} = a_{1} k_{4} - b_{1}k_{3} .$$
But this says that $h_{1}$ is the $C^s$ boundary values of a function which are holomorphic in $\Delta$, and of a function which is holomorphic in $\Delta^*$. Therefore $h_1$ is constant. The same argument also says that $h_{2}$ is a constant. The normalization of $l$ at $z=\infty$ implies that $h_{1}=1$ and $h_2=0$. Thus the triangular factorization of $g_1$ has the form claimed in (I.3).

\end{proof}

\subsection{Proof Of Theorem \ref{SU(2)theorem1} and \ref{smoothSLtheorem1}}

It follows from Theorem \ref{SU(2)theorem1smooth} that (II.1) and (II.3) are equivalent.
It is straightforward to calculate that a loop as in (II.2) has
the matrix form in (II.1); see Lemma \ref{keylemma} below for the form of the expressions one obtains
for the matrix entries of $g_2(\zeta)$, as functions of $\zeta$.

The main point is to show that (II.1) implies (II.2), in a generic sense. We first want to explain
what we mean by generic in a concrete way. Assume that $g_2$ is as in (II.1), where the degrees of $a_2,b_2,c_2$
and $d_2$ are $\le n$ and $n>0$. We first claim that $a_2$ and $d_2$ automatically have degree $<n$.
To see this we use the determinant condition $a_2^*d_2-b_2^*c_2=1$. For the left hand side, the coefficient of $z^{-n}$ is
$$(a_2^*)_{-n}(d_2)_0-(b_2^*)_{-n}(c_2)_0=(a_2^*)_{-n}(d_2)_0$$
(using $c_2(0)=0$). Since $n>0$, this coefficient must vanish. Since $d_2(0)\ne 0$, this implies $a_2$ has degree $<n$.
In a similar way, by considering the coefficient of $z^n$ of the left hand side,  we see that $(d_2)_n=0$.

We now do a dimension count. There are $2n$ $\zeta_k^{\pm}$ parameters. There are $4n-1$
coefficients for $a_2^*,b_2^*,c_2$ and $d_2$ (we subtract $1$ because $a_2(0)^*=d_2(0)$). The pointwise determinant of $g_2$
is a Laurent polynomial with coefficients strictly between $-n$ and $n$ (this uses the special form of $g_2$), hence the condition on this determinant introduces $2n-1$ constraints. Thus the number of $\zeta$ parameters agrees with the number of free coefficient parameters.

The coefficients are clearly polynomial functions of the $\zeta$ parameters, and we will now show how to express the
$\zeta$ variables as rational functions of the coefficients. One way to do this (in the special finite case we are presently considering) is
the following. For
$\zeta_n^+=(c_2)_n$ and $\zeta_n^-=(b_2^*)_{-n}$,
\begin{equation}\label{reduction}\left(\begin{matrix} 1&-\zeta^-_nz^{-n}\\
-\zeta^+_nz^n&1\end{matrix} \right)\left(\begin{matrix} a^*_2(z)&b^*_2(z)\\
c_2(z)&d_2(z)\end{matrix} \right)=\left(\begin{matrix} a^*_2(z)-\zeta_n^-c_2(z)z^{-n}&b^*_2(z)-\zeta_n^-d_2(z)z^{-n}\\
c_2(z)-\zeta_n^+a_2^*(z)z^n&d_2(z)-\zeta_n^+b_2^*(z)z^n\end{matrix} \right)\end{equation}
has (Laurent polynomial) entries of coefficients with  $-n<degree<n$. If $1-\zeta_n^-\zeta_n^+\ne 0$ (a generic condition), then we can reduce the order, and hence by an induction argument we can assert that generically $g_2$ can be written as in (II.2). A drawback of this argument is that it applies exclusively to finite type loops.\\

To complete the proofs of Theorems \ref{SU(2)theorem1} and \ref{smoothSLtheorem1}, it remains to show that the $\eta$ and $\zeta$ coefficients can be expressed as explicit rational functions of the Taylor coefficients of $c_2/d_2$ and $b_2^*/a_2^*$.

The following statement is completely algebraic.

\begin{theorem}\label{solvingsubgpcoords} (a) For $g_1(\eta)$ as in (II.1), the meromorphic function $b_1/a_1$
has Taylor series $\sum_{n=0}^{\infty}\psi_nz^n$
where $\psi_n$ is the sum of terms
$$\psi_n= (-1)^r(\eta^+_{i_{0}})\left(\eta^-_{j_1}(\eta^+_{i_1})\right)...\left(\eta^-_{j_r}(\eta^+_{i_r})\right) $$
where $j_s<i_s$ and $j_s\le i_{s-1}$ for $s=1,..,r$, and $\sum_{s=1}^{r+1} i_s -\sum_{s=1}^r j_s=n$; in particular
$$\psi_n=(\eta^+_n)\prod_{s=1}^{n-1}(1-\eta_s^-\eta_s^+)+polynomial(\eta^-_s,\eta^+_s,s<n)$$

(b) For $g_2(\zeta)$, the meromorphic function $c_2/d_2$
has Taylor series $\sum_{n=1}^{\infty}\xi_nz^n$
where $\xi_n$ is the sum of terms
$$(-1)^r(\zeta^+_{i_{0}})\left(\zeta^-_{j_1}(\zeta^+_{i_1})\right)...\left(\zeta^-_{j_r}(\zeta^+_{i_r})\right) $$
where $j_s<i_s$ and $j_s\le i_{s-1}$ for $s=1,..,r$, and $\sum_{s=1}^{r+1} i_s -\sum_{s=1}^r j_s=n$; in particular
$$\xi_n=(\zeta^+_n)\prod_{s=1}^{n-1}(1-\zeta_s^-\zeta_s^+)+polynomial(\zeta^-_s,\zeta_s^+,s<n)$$
\end{theorem}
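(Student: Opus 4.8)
The plan is to prove part (b) of Theorem \ref{solvingsubgpcoords} by induction on $n$ (the number of factors), and then obtain part (a) by applying the outer involution $\sigma$ of \eqref{outer}, which intertwines the two sets of conditions. So I focus on computing the Taylor expansion of $c_2/d_2$ for $g_2(\zeta) = g_2(\zeta_n)\cdots g_2(\zeta_1)$, where $g_2(\zeta_k) = \mathbf a(\zeta_k)\left(\begin{matrix}1 & \zeta_k^- z^{-k}\\ \zeta_k^+ z^k & 1\end{matrix}\right)$.

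The key observation is that the quantity $c_2/d_2$ transforms by a Möbius (linear fractional) action when we multiply on the left by one more factor. Writing $g_2(\zeta) = \left(\begin{matrix} a_2^* & b_2^*\\ c_2 & d_2\end{matrix}\right)$ and then forming $g_2(\zeta_{n+1}) g_2(\zeta)$, the new bottom row is
$$
\mathbf a(\zeta_{n+1})\bigl(\zeta_{n+1}^+ z^{n+1} a_2^* + c_2,\ \zeta_{n+1}^+ z^{n+1} b_2^* + d_2\bigr),
$$
so the new ratio is $(\zeta_{n+1}^+ z^{n+1} a_2^* + c_2)/(\zeta_{n+1}^+ z^{n+1} b_2^* + d_2)$. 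Dividing numerator and denominator by $d_2$, and writing $c_2/d_2 = \xi$, $a_2^*/d_2$ and $b_2^*/d_2$ in terms of $\xi$ and the auxiliary ratio $b_2^*/a_2^*$ — which by part (a) (equivalently, by the $\sigma$-symmetry, applied at the level of this induction) has a Laurent expansion $\sum_{m\ge 1}\eta\text{-type terms}\cdot z^{-m}$ starting at $z^{-1}$ — one sees the new numerator contributes a leading term $\zeta_{n+1}^+ z^{n+1}(a_2^*/d_2)$, and $a_2^*/d_2 = (a_2^*/b_2^*)(b_2^*/d_2)$. The first step is therefore to set up a coupled recursion for the pair $(c_2/d_2,\ b_2^*/a_2^*)$ under left multiplication by $g_2(\zeta_{k})$, track the lowest-order term, and verify by inspection that the lowest-order term in $c_2/d_2$ after incorporating $\zeta_n$ is exactly $\zeta_n^+ z^n$ times the product of the $\mathbf a(\zeta_s)^{-2} = (1-\zeta_s^-\zeta_s^+)$ factors accumulated from the previous $n-1$ steps — giving the displayed leading coefficient $\xi_n = \zeta_n^+\prod_{s=1}^{n-1}(1-\zeta_s^-\zeta_s^+) + (\text{lower})$.

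For the full combinatorial formula — $\xi_n = \sum (-1)^r \zeta^+_{i_0}(\zeta^-_{j_1}\zeta^+_{i_1})\cdots(\zeta^-_{j_r}\zeta^+_{i_r})$ with $j_s < i_s$, $j_s \le i_{s-1}$, and $\sum_{s=0}^r i_s - \sum_{s=1}^r j_s = n$ — I would argue that each such monomial corresponds to a lattice-path / bracketing pattern in which one selects, reading the product $g_2(\zeta_n)\cdots g_2(\zeta_1)$ from one end, an alternating sequence of ``$+$'' contributions (powers $z^{i}$ from the lower-left entries $\zeta_i^+ z^i$) and ``$-$'' corrections (powers $z^{-j}$ from upper-right entries $\zeta_j^- z^{-j}$), and the two index inequalities $j_s < i_s$ and $j_s \le i_{s-1}$ are precisely the constraints that keep the running exponent non-negative throughout (so the term survives projection onto $H^0(\Delta)$) and that the factors are applied in the correct order. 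Concretely I would expand $M_g$'s action, or equivalently expand $c_2$ and $d_2$ themselves as explicit sums over such index data (this is ``Lemma \ref{keylemma} below'' referenced in the text), then divide the two series and match monomials; the sign $(-1)^r$ arises because $d_2$ appears in the denominator and the geometric-series inversion $1/d_2 = 1/(1 + (d_2 - 1)) = \sum (-1)^k (d_2-1)^k$ introduces exactly the alternating signs keyed to the number $r$ of ``$-$'' corrections.

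The main obstacle will be the bookkeeping in matching the two-sided constraints $j_s < i_s$ and $j_s \le i_{s-1}$ against the order of matrix multiplication, and in confirming that no monomials are over- or under-counted when the series for $c_2$ and $1/d_2$ are multiplied together — in particular checking that cross-terms where a ``$-$'' from $1/d_2$ pairs with material from $c_2$ recombine cleanly into the stated single-sum form rather than producing extra structure. A clean way to control this is to prove simultaneously, by the same induction, the companion statement for $b_2^*/a_2^*$ (the $\sigma$-image), so that at each inductive step the Möbius update has both of its inputs already in the desired normal form; the leading-term assertion then falls out immediately, and the full formula follows by carefully reading off which index patterns the Möbius recursion generates. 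Since the statement is, as the text notes, ``completely algebraic,'' everything reduces to a finite formal manipulation of power series once the recursion is correctly set up, so the induction closes without analytic input.
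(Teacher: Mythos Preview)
Your outline lands on the same skeleton as the paper: compute $c_2$ and $d_2$ from Lemma~\ref{keylemma}, write $c_2/d_2 = \gamma_2(1+\delta)^{-1} = \gamma_2\sum_{k\ge 0}(-1)^k\delta^k$, and read off the $(-1)^r$ from the geometric series. That much is right and matches the paper's approach exactly. But the step you flag as ``bookkeeping'' is where the actual content lives, and your plan does not supply the mechanism that makes it work.

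The issue is this: each coefficient $\gamma_{2,k}$ and $\delta_{2,k}$ from Lemma~\ref{keylemma} is an \emph{infinite} sum (over arbitrarily long alternating index strings), so each term $(\gamma\delta^s)_n$ in the expansion of $\xi_n$ is also an infinite sum. The theorem asserts that $\xi_n$ is a \emph{finite} polynomial in the $\zeta^\pm$. So massive cancellation must occur, and it is not just a matter of matching index constraints. The paper's argument is combinatorial: a given monomial in the $\zeta^\pm$ variables can appear in several of the $(\gamma\delta^s)_n$, depending on how many ways it can be split (by inserting separators) into a $\gamma$-piece followed by $s$ $\delta$-pieces, each internally satisfying the ``increasing index'' constraints of Lemma~\ref{keylemma}. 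If the monomial's index string has $S$ places where such a split is possible, it appears $\binom{S}{s}$ times in $(\gamma\delta^s)_n$, and the alternating sum $\sum_s(-1)^s\binom{S}{s}=0$ kills it --- \emph{unless} $S=0$, i.e.\ there is no admissible place to cut. The surviving monomials are exactly those whose index pattern forces $j_s\le i_{s-1}$ at every juncture, which is the condition in the theorem. Your proposal never identifies this binomial cancellation, and without it the ``matching monomials'' step does not close.

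Your alternative route --- induction on the number of factors via the M\"obius action on $c_2/d_2$ --- is a genuinely different idea, and it does give the leading-term statement $\xi_n=\zeta_n^+\prod_{s<n}(1-\zeta_s^-\zeta_s^+)+\cdots$ cleanly. But for the full combinatorial formula it is not obviously easier: adding the factor $g_2(\zeta_{n+1})$ mixes the ratio $c_2/d_2$ with $a_2^*/d_2$ and $b_2^*/d_2$, and reorganizing the result back into the stated index-constrained normal form requires essentially the same cancellation you were trying to avoid. (Also, writing $a_2^*/d_2=(a_2^*/b_2^*)(b_2^*/d_2)$ is not a safe maneuver at the level of formal power series, since $b_2^*$ has no constant term.) The paper does not use induction at all; it works directly with the full product and the binomial cancellation above.
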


For example
$$\frac{c_2}{d_2}=\zeta_1^+z+\zeta_2^+(1-\zeta_1^-\zeta_1^+)z^2+
(1-\zeta_1^-\zeta_1^+)((1-\zeta_2^-\zeta_2^+)\zeta_3^+-\zeta_1^-(\zeta_2^+)^{2})z^3+...$$
In a similar way, by considering $g_2^{-*}$,
$$\frac{b_2^*}{a_2^*}=\zeta_1^-z^{-1}+\zeta_2^-(1-\zeta_1^-\zeta_1^+)z^{-2}+
(1-\zeta_1^-\zeta_1^+)((1-\zeta_2^-\zeta_2^+)\zeta_3^- -\zeta_1^+(\zeta_2^-)^{2})z^{-3}+...$$
From this it is evident that we can recursively and rationally solve for the $\zeta$ coordinates in terms of the Taylor coefficients for
$b_2^*/a_2^*$ and $c_2/d_2$.

\begin{proof} We will prove part (b); part (a) is proven in the same way.

Let $\xi=\gamma_2/\delta_2$. Our strategy is completely straightforward: we will first recall
the formulas for the coefficients of $\gamma_2$ and $\delta_2$, and we will then calculate the Taylor series for the quotient.
The following (which is completely straightforward) is from \cite{Pi3}, which is reminiscent of the Pauli exclusion principle:

\begin{lemma}\label{keylemma} If
\begin{equation}\label{2product}\left(\begin{matrix}\alpha_2^*(z)&\beta_2^*(z)\\
\gamma_2(z)&\delta_2(z)\end{matrix}\right)=\lim_{n\to\infty}\mathbf a(\zeta_n)\left(\begin{matrix} 1&\zeta^-_nz^{-n}\\
\zeta^+_nz^n&1\end{matrix} \right)..\mathbf
a(\zeta_1)\left(\begin{matrix} 1&
\zeta^-_1z^{-1}\\
\zeta^+_1z&1\end{matrix} \right),\end{equation}
then
$$\gamma_2(z)=\sum_{n=1}^{\infty}\gamma_{2,n}z^n,$$
$$\gamma_{2,n}=\sum (\zeta^+_{i_1})\zeta^-_{j_1}...(\zeta^+_{
i_r})\zeta^-_{j_r}(\zeta^+_{i_{r+1}}),$$ where the sum is over
multiindices satisfying
$$0<i_1<j_1<..<j_r<i_{r+1},\quad\sum i_{*}-\sum j_{*}=n,$$
and
$$\delta_2(z)=1+\sum_{n=1}^{\infty}\delta_{2,n}z^n,$$
$$\delta_{2,n}=\sum\zeta^-_{j_1}(\zeta^+_{i_1})...\zeta^-_{j_r}(
\zeta^+_{i_r}),$$ where the sum is over multiindices satisfying
$$0<j_1<i_1<..<i_r,\quad\sum (i_{*}-j_{*})=n$$
\end{lemma}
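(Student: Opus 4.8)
The plan is to prove the lemma by multiplying out the (finite, or in the $C^\infty$ case convergent) matrix product and sorting the surviving monomials by the ``exclusion'' bookkeeping the authors allude to. First, a reduction: write the $k$-th building block as $\mathbf a(\zeta_k)\bigl(I+\zeta_k^-z^{-k}E_{12}+\zeta_k^+z^kE_{21}\bigr)$, with $E_{12},E_{21}$ the elementary matrix units. The scalars commute out as the common factor $\prod_k\mathbf a(\zeta_k)$, and by the identities $\gamma_2=\mathbf a_2c_2$, $\delta_2=\mathbf a_2d_2$ with $\mathbf a_2=d_2(0)^{-1}$ (from the proof of Theorem \ref{SU(2)theorem1smooth}), passing to the third factor of the triangular factorisation divides this common scalar out. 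So it suffices to compute the $(2,1)$ and $(2,2)$ entries of the reduced product $\prod_k\bigl(I+\zeta_k^-z^{-k}E_{12}+\zeta_k^+z^kE_{21}\bigr)$, checking en route that its $(2,2)$ entry has constant term $1$.

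To do this I would expand the reduced product. A monomial corresponds to choosing, for each index $k$, one of the three terms $I$, $\zeta_k^-z^{-k}E_{12}$, $\zeta_k^+z^kE_{21}$; list the indices at which a non-identity term is chosen in decreasing order $k_1>k_2>\dots>k_m$ (the order in which the matrix units multiply, since the leftmost block carries the largest index). By $E_{ij}E_{k\ell}=\delta_{jk}E_{i\ell}$, such a monomial is nonzero precisely when the chosen factors alternate between $E_{12}$ and $E_{21}$ down the list --- one may not choose $E_{12}$ (or $E_{21}$) at two consecutive positions, the exclusion phenomenon --- and then the product of the units equals $E_{p_1q_m}$, the row index of the first factor with the column index of the last. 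A surviving monomial contributes to the $(2,1)$ entry exactly when its chain begins (largest index) and ends (smallest index) with an $E_{21}$ (``plus'') factor; it then has $r$ ``minus'' and $r+1$ ``plus'' factors, and writing the plus-indices increasingly as $i_1<\dots<i_{r+1}$ and the minus-indices as $j_1<\dots<j_r$, alternation forces the interleaving $i_1<j_1<i_2<j_2<\dots<i_r<j_r<i_{r+1}$, the attached scalar is $\zeta^+_{i_1}\zeta^-_{j_1}\cdots\zeta^-_{j_r}\zeta^+_{i_{r+1}}$ with coefficient $+1$, and the attached power of $z$ is $z^{\sum_s i_s-\sum_s j_s}$; extracting the coefficient of $z^n$ gives the stated formula for $\gamma_{2,n}$. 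The $(2,2)$ entry is the same except that the chain must begin with a plus and end with an $E_{12}$ (``minus'') factor, so it carries equal numbers $r$ of each, interleaving $j_1<i_1<\dots<j_r<i_r$ (the empty chain, $r=0$, supplying the constant $1$), which yields $\delta_{2,n}$. Part (a) follows by the identical computation with $\zeta$ replaced by $\eta$, or by conjugating part (b) by the outer involution $\sigma$.

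For finite-type $g_i$ only finitely many $\zeta_k$ (resp.\ $\eta_k$) are nonzero, so the product is literally finite and all the sums above are finite; nothing else is needed. In the $C^\infty$ case one further uses that the entries of the limit are the termwise limits of the finite-product entries, and that each coefficient series $\gamma_{2,n}=\sum(\cdots)$, $\delta_{2,n}=\sum(\cdots)$ converges absolutely: existence of the limit in $C^\infty(S^1,SL(2,\mathbb C))$ is part of Theorem \ref{smoothSLtheorem1}, and absolute convergence of each coefficient series is immediate from the bound $\prod_k(1+|\zeta_k^+|+|\zeta_k^-|)<\infty$, valid under the decay hypotheses on $\{\zeta_k\}$. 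The whole argument is combinatorial; the only point needing care is to pin down the left-to-right ordering conventions so that the index ranges and the alternation emerge exactly as in the statement, and that --- mild as it is --- is the main obstacle, with no analytic difficulty involved.
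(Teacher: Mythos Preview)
Your proposal is correct and is precisely the natural argument. The paper itself does not prove this lemma; it simply quotes it from \cite{Pi3} with the remark that it is ``completely straightforward,'' so there is no alternative approach to compare. Your expansion via elementary matrix units $E_{12},E_{21}$ and the alternation (``exclusion'') bookkeeping is exactly what is intended, and your handling of the scalar $\prod_k\mathbf a(\zeta_k)$ correctly resolves the slight notational ambiguity in the displayed equation (the formulas for $\gamma_{2,n},\delta_{2,n}$ are for the product \emph{without} the $\mathbf a$-factors, equivalently for the third factor of the triangular factorization, since $\delta_2(0)=1$). One trivial slip: your closing reference to ``Part (a)'' belongs to Theorem~\ref{solvingsubgpcoords}, not to this lemma, which has only one part.
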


For example
$$\gamma_2=\zeta_1^+z+(\zeta_2^++(\zeta_1^+(\zeta_2^-\zeta_3^++\zeta_3^-\zeta_4^+...)z^2+... $$
(with the exception of the first, all the coefficients involve infinite sums) and
$$\delta_2=1+(\zeta_1^-\zeta_2^++\zeta_2^-\zeta_3^+...)z+...$$
(all the coefficients involve infinite sums).

We now want to prove (b) of Theorem \ref{solvingsubgpcoords}.
To simplify notation, let $\gamma:=\gamma_2$, and write $\delta_2:=1+\delta$.
Then $\gamma_2/\delta_2=\gamma(1-\delta+\delta^2-..$, and the nth coefficient of $\gamma_2/\delta_2$ equals
$$\gamma_n-(\gamma\delta)_n+(\gamma\delta^2)_n-..+(-1)^{n-1}(\gamma\delta^{n-1})_n$$
Each of the terms in this sum, according to the Lemma, has an expression as an infinite sum. According to the statement
of the theorem, all but finitely many of these terms cancel out.

To explain in a leisurely way how this comes about, first consider
\begin{equation}\label{sumofterms}(\gamma\delta)_n=\sum_{k=1}^{n-1}\gamma_{k}\delta_{n-k}\end{equation}
This is a sum of terms of the form
$$\left(\zeta^+_{i_1})\zeta^-_{j_1}...(\zeta^+_{
i_r})\zeta^-_{j_r}(\zeta^+_{i_{r+1}})\right)\left(\zeta^-_{j'_1}(\zeta^+_{i'_1})...\zeta^-_{j'_{r'}}(
\zeta^+_{i'_{r'}})\right)$$
where
$$0<i_1<j_1<..<j_r<i_{r+1}, \qquad \sum i_{*}-\sum j_{*}=k, $$
$$0<j'_1<i'_1<..<i'_{r'}, \qquad  \sum i'_{*}-\sum j'_{*}=n-k$$
If $i_{r+1}<j'_1$, then this product will exactly cancel with a term in the corresponding sum for $\gamma_n$; it is in some sense
obeying a Pauli exclusion principle.  The only term in the sum for $\gamma_n$ that is not canceled is $\zeta^+_n$; this is the only term which cannot be broken into two terms as in the sum for $\gamma\delta$. If $j'_1\le i_{r+1}$, then we keep this term; however,
we will see that many of these terms are canceled by subsequent terms appearing in the sum (\ref{sumofterms}).

Consider $(\gamma\delta^s)_n$. This is a sum of terms of the form
\begin{equation}\label{term1}\left(\zeta^+_{i_1})\zeta^-_{j_1}...(\zeta^+_{
i_r})\zeta^-_{j_r}(\zeta^+_{i_{r+1}}\right) \times
\left(\zeta^-_{j^1_1}(\zeta^+_{i^1_1})...\zeta^-_{j^1_{r^1}}(
\zeta^+_{i^1_{r^1}})\right)\times ... \times \left(\zeta^-_{j^s_1}(\zeta^+_{i^s_1})...\zeta^-_{j^s_{r^s}}(
\zeta^+_{i^s_{r^s}})\right)\end{equation}
where each of the factors separated by $\times$ (the first factor comes
from $\gamma$, and the other $s$ factors come from $\delta^s$) satisfy the appropriate constraints in the Lemma (in particular the sum
of the $i$ indices minus the sum of the $j$ indices equals $n$).

At one extreme, it may happen that all of the indices in (\ref{term1}) are increasing, i.e. $i_{r+1}<j^1_1$ and $i^{s'}_{r^{s'}}<j^{s'+1}_1$ for $s'=1,..,s-1$. By removing some of the $\times$, we see that this product will have occurred in
all of the preceding terms $(\gamma\delta^{s'})_n$, $s'=0,..,s-1$ (which occur with alternating signs). Similarly if
$r>0$ or $r^j>1$ for some $j$, then we can insert $\times$ and this more finely factored product will occur in some subsequent terms $(\gamma\delta^{s'})_n$, for $s<s'$; the largest such $s'$ is $S=r+r^1+..+r^s$, in which for each factor
of $\delta$, the corresponding factor
$$\zeta^-_{j_1}(\zeta^+_{i_1})...\zeta^-_{j_{r'}}(
\zeta^+_{i_{r'}})$$ is irreducible in the sense that it cannot be split into a product of two similar factors, i.e. $r'=1$.
The number of times the product (\ref{term1}) occurs in one of the terms in (\ref{sumofterms}) depends on the number of ways we can insert $\times$. Taking into account
the signs that occur in (\ref{sumofterms}), the coefficient of this product in (\ref{sumofterms}) is
$$\sum_{j=0}^S(-1)^j\left(\begin{matrix}S\\j\end{matrix}\right)=0$$ Thus this
product completely cancels out in the sum (\ref{sumofterms}).

At the opposite extreme, the term (\ref{term1}) may have the same form as in the statement of the theorem, i.e. $r=0$, and
$r^j=1$, $j=1,..,s$. In this case this term occurs in exactly one of the terms $(\gamma\delta^{s'})_n$. There is no cancelation.

In between these two extremes, we are considering a product for which, in the expression (\ref{term1}), there is a positive number of instances when $j^{s''}_{1}\le i^{s''-1}_{r^{s''-1}}$ for some $s''=1,..,s-1$. As in the first extreme case, we can possibly remove some of the $\times$ to
see that this term occurs in earlier terms $(\gamma\delta^{s'})_n$ ($s'<s$), and we can possibly insert some $\times$ to see that it occurs in some
later terms $(\gamma\delta^{s'})_n$ ($s<s'$). For definiteness we can suppose that $s$ is as large as possible, i.e. that $r=0$
and each $r^j=1$, so that it is just a question of removing $\times$. If $s_0$ is the smallest $s'$ such that the product occurs
in $\gamma\delta^{s'}$, then $s_0<s$ (because we are not in the second extreme case) and the coefficient of this product in (\ref{sumofterms}) is
$$\sum_{j=s_0}^s(-1)^j\left(\begin{matrix}s-s_0\\j\end{matrix}\right)=0$$
Thus this product completely cancels out.

If we multiply out $(\zeta^+_n)\prod(1+\zeta_i\overline \zeta_i)$, then we see that each of the terms does occur in
the sum in part (b). This proves the last claim in part (b).

\end{proof}

As we have previously explained, this completes the proofs of Theorems \ref{SU(2)theorem1} and \ref{SU(2)theorem1smooth}).

\subsection{Toeplitz Determinants}

The following theorem follows by analytic continuation from the unitary case (the first equalities are due to
Widom, see \cite{W}).

\begin{theorem}\label{det1} For $g_i$ as in Theorem \ref{smoothSLtheorem1} having a root subgroup factorization,
$det(A(g_1)A(g_1^{-1}))$ equals
$$\lim_{N\to{\infty}}det(A_N(g_1))=det(1-B(g_1)C(g_1^{-1}))
=\prod_{n\ge 1}(1-\eta_n^-\eta^+_n)^{-n}$$ and $det(A(g_2)A(g_2^{-1}))$ equals
$$\lim_{N\to{\infty}}det(A_N(g_2))=det(1-B(g_2)C(g_2^{-1}))=\prod_{n\ge 1}(1-\zeta_n^-\zeta^+_n)^{-n},$$
where $A_N$ denotes the finite dimensional compression of $A$ to
the span of $\{\epsilon_i z^k:0\le k\le N\}$, and in the third
expressions, $x$ and $y$ are viewed as multiplication operators on
$H=L^2(S^1)$, with Hardy space polarization.
\end{theorem}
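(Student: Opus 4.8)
The plan is to split the assertion into the chain of ``Widom equalities'' $\lim_{N\to\infty}\det A_N(g_i)=\det(A(g_i)A(g_i^{-1}))=\det(1-B(g_i)C(g_i^{-1}))$ and the root subgroup product formula, obtaining the latter by analytic continuation from the unitary case of \cite{Pi3}. The Widom equalities need no unitarity: the elementary Toeplitz identity $A(gh)=A(g)A(h)+B(g)C(h)$ with $h=g_i^{-1}$ gives $A(g_i)A(g_i^{-1})=1-B(g_i)C(g_i^{-1})$ on $\mathcal H^{+}$, and since $g_i$ (hence its polynomial adjugate $g_i^{-1}$, using $\det g_i\equiv1$) is smooth, $B(g_i)$ and $C(g_i^{-1})$ are trace class, so this is an identity of the form $1+\text{trace class}$ whose Fredholm determinant is what $\det(A(g_i)A(g_i^{-1}))$ denotes. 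Moreover $g_i$ has a smooth triangular factorization $g_i=lmau$ from (I.3)/(II.3), which makes $A(g_i)$ invertible and forces the Szeg\H{o} constant $\det(m(g_i)a(g_i))=\det\,\mathrm{diag}(\mathbf a_i,\mathbf a_i^{-1})=1$; Widom's strong limit theorem for block Toeplitz operators (\cite{W}) then yields $\det A_N(g_i)\to\det(A(g_i)A(g_i^{-1}))$. The expression on $L^2(S^1)$ in the last displayed line comes from feeding the triangular factorizations of $g_i$ and $g_i^{-*}$ into $B(g_i)C(g_i^{-1})$: the unipotent $l$-factors carry a single scalar off-diagonal entry, which collapses the block Hankel operators to scalar ones.

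The content is the identity $\det(1-B(g_i)C(g_i^{-1}))=\prod_{n\ge1}(1-\eta_n^-\eta_n^+)^{-n}$, which I would prove first for finite loops. Fix $g_1=g_1(\eta_0,\dots,\eta_N)$ as in (I.2) and put $P(\eta)=\prod_{k=0}^N(1-\eta_k^-\eta_k^+)$. Pulling the scalars $\mathbf a(\eta_k)=(1-\eta_k^-\eta_k^+)^{-1/2}$ out in front, $g_1=P(\eta)^{-1/2}\widetilde g(\eta)$ with $\widetilde g$ a matrix Laurent polynomial whose coefficients are polynomials in $\eta$; since $\det g_1\equiv1$ we get $\det\widetilde g=P(\eta)$ and $g_1^{-1}=P(\eta)^{-1/2}\widehat g(\eta)$, $\widehat g$ the adjugate of $\widetilde g$, again polynomial. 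Thus the half-integer powers of $P$ cancel: $B(g_1)C(g_1^{-1})=P(\eta)^{-1}B(\widetilde g)C(\widehat g)$ has matrix entries polynomial in $\eta$, so $\det(1-B(g_1)C(g_1^{-1}))$ is a \emph{rational} function of $\eta$, holomorphic on the connected domain $\Omega_N=\{\eta\in(\mathbb C^2)^{N+1}:\eta_k^-\eta_k^+\ne1\ \forall k\}$, as is the right-hand side $\prod_{k=0}^N(1-\eta_k^-\eta_k^+)^{-k}$. On the ``unitary'' submanifold $M=\{\eta_k^+=-\overline{\eta_k^-}\}\subset\Omega_N$ — connected and maximal totally real, on which $1-\eta_k^-\eta_k^+=1+|\eta_k^-|^2$ and $g_1(\eta)$ is $SU(2,\mathbb C)$-valued — the identity holds by the unitary determinant formula of \cite{Pi3} together with the algebraic identity $A(k)A(k^{-1})=1-B(k)C(k^{-1})$. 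Since $M$ is a set of uniqueness for holomorphic functions on $\Omega_N$, the identity holds throughout $\Omega_N$.

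To pass to the $C^\infty$ statement: given $g_1$ with rapidly decreasing coordinates $(\eta_n)_{n\ge0}$, the $N$-th partial product in its root subgroup factorization is the finite loop $g_1^{(N)}:=g_1(\eta_0,\dots,\eta_N)$, and $g_1^{(N)}\to g_1$ in $C^\infty(S^1,SL(2,\mathbb C))$ by the very meaning of the factorization in Theorem \ref{smoothSLtheorem1}. Hence $B(g_1^{(N)})\to B(g_1)$ and $C((g_1^{(N)})^{-1})\to C(g_1^{-1})$ in trace norm, so $\det(1-B(g_1^{(N)})C((g_1^{(N)})^{-1}))\to\det(1-B(g_1)C(g_1^{-1}))$, while $\prod_{n=0}^N(1-\eta_n^-\eta_n^+)^{-n}\to\prod_{n\ge1}(1-\eta_n^-\eta_n^+)^{-n}$ because $\sum_n n|\eta_n^-\eta_n^+|<\infty$. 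Taking $N\to\infty$ in the finite identity proves the $g_1$ formula; the $g_2$ statement (and the $\zeta$-product) follows identically, or by applying the outer involution (\ref{outer}).

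I expect the main obstacle to be making the continuation step watertight rather than routine. One must check: (i) that $g_1(\eta)$ really is polynomial in $\eta$ up to the single scalar $P(\eta)^{-1/2}$, and that this scalar enters $B(g_1)C(g_1^{-1})$ to an \emph{even} total power, so that no branch cut survives and the determinant is single-valued and holomorphic on all of $\Omega_N$, not merely near $M$; (ii) that $\Omega_N$ is connected and that $M$ is genuinely a uniqueness set (a connected, real-analytic, maximal totally real submanifold), so that the identity principle applies; and (iii) in the infinite case, the trace-norm convergence $B(g_1^{(N)})\to B(g_1)$, which is the one point leaning on the quantitative side of Theorem \ref{smoothSLtheorem1}. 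By contrast the Widom equalities are standard once invertibility of $A(g_i)$ — supplied by the triangular factorization — is in hand.
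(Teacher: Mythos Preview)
Your approach is exactly the one the paper takes: it proves Theorem~\ref{det1} in a single sentence, attributing the first equalities to Widom \cite{W} and asserting that the product formula ``follows by analytic continuation from the unitary case'' of \cite{Pi3}. You have simply made that analytic continuation explicit (polynomial dependence modulo the scalar $P(\eta)^{-1/2}$, uniqueness off the totally real unitary slice, then a trace-norm limit), which the paper does not spell out at all.
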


\section{Proof of Theorem \ref{U(2)theorem}, and
Generalizations}\label{SU(2)caseII}

In the process of proving Theorem \ref{U(2)theorem}, we will also prove the
following Holder version of the result:

\begin{theorem}\label{maintheorem}Assume $s>0$ and nonintegral, or
$s=\infty$. For $g\in C^s(S^1,SL(2,\mathbb C))$, the following are
equivalent:

(i) $g$ has a triangular factorization $g=lmau$, where $l$ and $u$
have $C^s$ boundary values, and similarly for $g^{-*}$

(ii) $g$ has a factorization $g=g_1^*\lambda g_2$, where
$g_1,g_2\in C^s(S^1,SL(2,\mathbb C))$ satisfy the equivalent conditions
(I.1) and (I.3) ((II.1) and (II.3), respectively) of Theorem
\ref{SU(2)theorem1smooth}, and $\lambda\in C^s(S^1,\mathbb C\setminus\{0\})_0$, the identity component.

\end{theorem}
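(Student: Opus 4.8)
The plan is to establish the equivalence (i) $\Leftrightarrow$ (ii) in Theorem \ref{maintheorem} by leveraging the structure already developed for the factors $g_1$ and $g_2$, together with the Birkhoff/triangular factorization machinery of Section \ref{triangularfactorization}. First I would prove (ii) $\Rightarrow$ (i): given $g=g_1^*\lambda g_2$ with $g_1,g_2$ as in Theorem \ref{SU(2)theorem1smooth} and $\lambda$ in the identity component of $C^s(S^1,\mathbb{C}\setminus\{0\})$, I would use the triangular factorizations of $g_2$ and $g_2^{-*}$ (equivalently $g_1^*$ and its inverse-star) supplied by that theorem, write $\lambda = e^{\chi}$ with $\chi\in C^s(S^1,\mathbb C)$ (possible precisely because $\lambda$ is in the identity component), and then multiply out. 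The point is that $g_1^*$ is lower-triangular-times-(diagonal)-times-(holomorphic-in-$\Delta$) by the $\sigma$-image of (I.3), the diagonal $\mathrm{diag}(e^\chi,e^{-\chi})$ splits as $\mathrm{diag}(e^{\chi_-},e^{-\chi_-})\,\mathrm{diag}(e^{\chi_0},e^{-\chi_0})\,\mathrm{diag}(e^{\chi_+},e^{-\chi_+})$, and $g_2$ is (unipotent upper triangular with $z^{-j}$ entries)-times-diagonal-times-(holomorphic in $\Delta$). Absorbing the holomorphic-in-$\Delta^*$ pieces leftward and the holomorphic-in-$\Delta$ pieces rightward, and checking that the remaining diagonal is positive after adjusting by a constant unimodular factor, yields a Birkhoff factorization of $g$ in a decomposing algebra; by Corollary \ref{decomposing} the factors are $C^s$, and the middle matrix factor then has an ordinary $SL(2,\mathbb C)$ triangular factorization, giving (i). The identical argument applied to $g^{-*}=g_2^{-*}\lambda^* g_1^{-*}$ handles the symmetric half.

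For the converse (i) $\Rightarrow$ (ii), I would start from the triangular factorization $g=lmau$ with $C^s$ factors and peel off the known building blocks. The strategy is to locate a factor $g_2$ of type (II.1) inside $g$ so that $g_2^{-*} g\,(\text{something})$ is of the complementary type (I.1) up to a diagonal. Concretely: from $u\in H^0(\Delta,SL(2,\mathbb C))$, the lower-left entries of $g$ on $\Delta$ are controlled, and one can read off holomorphic $c_2,d_2$ with $c_2(0)=0$, $d_2(0)\ne 0$ and no common zero in $\Delta$ (the non-vanishing of $A_1(g)$, which follows from (i) via Theorem \ref{factorizationthm}, is what rules out a common zero, exactly as in the proof of Theorem \ref{SU(2)theorem1smooth}). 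This produces $g_2$ satisfying (II.1), hence (II.3). Then $g_1^* := g\, g_2^{-1}$ should be of the form $g_1^*\cdot\mathrm{diag}(e^\chi,e^{-\chi})$ after extracting the residual diagonal, and one checks $g_1$ satisfies (I.1)–(I.3); the scalar $\chi$ is recovered from the ratio of the surviving diagonal entries, and lies in $C^s(S^1,\mathbb C)$ with $e^\chi$ in the identity component because $g$ and the building blocks all have winding number considerations that force the diagonal factor to be null-homotopic. Finally I would invoke Theorem \ref{U(2)theorem}(a)(iii) or the equivalences already in hand to close the loop with the Toeplitz-invertibility formulation, noting that the $C^s$ statement specializes to the $C^\infty$ statement by taking $s=\infty$ throughout.

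The main obstacle I expect is the careful bookkeeping in (i) $\Rightarrow$ (ii): ensuring that after stripping off $g_2$ the remaining loop genuinely has the block form required for (I.1) rather than merely a triangular factorization of generic type, and — relatedly — pinning down that the scalar diagonal factor $\mathrm{diag}(e^\chi,e^{-\chi})$ really appears with $\chi$ single-valued (no $\log z$ term), i.e. that the determinant-one constraint plus the normalizations of $l,u$ at $\infty,0$ leave no room for a winding. This is precisely where the argument in the proof of Theorem \ref{SU(2)theorem4smooth} — that an entry holomorphic on both $\Delta$ and $\Delta^*$ with $C^s$ matching boundary values must be constant — gets reused, but here applied to the ratios controlling $\chi$ rather than to a single matrix entry. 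The Hölder-algebra input (that $C^s$ is decomposing, hence closed under the relevant projections) is what makes all the extracted factors land back in $C^s$, so at each peeling step one must confirm the relevant sub-loop is bounded and quasicontinuous so that Theorems \ref{factorizationthm} and \ref{hartmann} apply; for $s>0$ this is automatic. The remaining verifications — that (ii) is preserved under $g\mapsto g^{-*}$, and that the two ways of grouping ((I)-type versus (II)-type) agree via $\sigma$ — are routine given Remark \ref{remarks1}.
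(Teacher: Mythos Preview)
Your direction (ii) $\Rightarrow$ (i) is essentially the paper's argument: write $\lambda=e^{\chi}$, split $\chi=\chi_-+\chi_0+\chi_+$, and multiply the three triangular factorizations together; the paper does exactly this and records explicit formulas for $l(g)$ and $u(g)$, then observes that $g^{-*}$ has the same structural form.

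The direction (i) $\Rightarrow$ (ii), however, has a genuine gap. Your plan is to read off $c_2,d_2$ from $u(g)$ and thereby ``produce $g_2$ satisfying (II.1)''. But (II.1) of Theorem \ref{SU(2)theorem1smooth} requires \emph{all four} entries $a_2,b_2,c_2,d_2$, with $a_2,b_2$ holomorphic in $\Delta$ and not simultaneously vanishing; the second row of $u(g)$ gives only $(c_2,d_2)$ up to a holomorphic scalar (in the paper's notation $(u_{21},u_{22})=e^{-\chi_+}\mathbf a_2(c_2,d_2)$), and says nothing about $a_2,b_2$. You cannot simply set $g_2:=u(g)$, since the first row of $u(g)$ is not of the form $(a_2^*,b_2^*)$. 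Likewise your ``extract the residual diagonal'' step is precisely the hard part: separating $e^{\chi}$ from the constant diagonal pieces $\mathbf a_1,\mathbf a_2$ inside $g_1^*,g_2$ cannot be done from the factorization of $g$ alone.

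The paper's mechanism, which your sketch lacks, is to use the triangular factorizations of $g$ \emph{and} $g^{-*}$ \emph{simultaneously}. From the second rows of $u(g)$ and $u(g^{-*})$ one obtains $(c_2,d_2)$ and $(-b_2,a_2)$, each up to an unknown holomorphic scalar; the determinant constraint $a_2^*d_2-b_2^*c_2=1$ then forces the product $u_{21}u_{21}'-u_{22}u_{22}'$ to equal $e^{-\chi_++\chi_-^*}$ times a constant, and the analogous identity from the first columns of $l(g),l(g^{-*})$ gives the same exponential times another constant. These two equations determine $\chi_{\pm}$ (and the constants), after which $a_i,b_i,c_i,d_i$ are read off directly. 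A consistency condition must be checked, and it follows from $g(g^{-*})^*=1$. (A minor point: you invoke Theorem \ref{factorizationthm} in the wrong direction---it asserts that Toeplitz invertibility implies triangular factorization, not conversely---so your argument for ``no common zero of $c_2,d_2$'' also needs repair; in the paper this comes for free once $(c_2,d_2)$ is identified as a scalar multiple of the second row of $u(g)\in H^0(\Delta,SL(2,\mathbb C))$.)
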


\begin{proof}We will use the notation in (\ref{factorization}) for $g$,
and the notation in Theorem \ref{SU(2)theorem1smooth} for the
entries of the $g_i$ and their triangular factorizations. Without
much comment, we will use the fact that $C^s$ is a decomposing
algebra, so that factors in various decompositions will remain in
$C^s$.

We first show that (ii) implies (i). Suppose that $g\in
C^s(S^1,SL(2))$ can be factored as
$g=g_1^*\left(\begin{matrix}\lambda&0\\0&\lambda^{-1}
\end{matrix}\right) g_2$, as in (ii). Write
$\lambda=exp(-\chi_-+\chi_0+\chi_+)$, where $\chi_0 \in \mathbb C$
and $\chi_+\in H^0(\Delta)$, $\chi_+(0)=0$, with $C^s$ boundary
values. Then $g$ has triangular factorization of the form
\begin{equation}g=l(g)\left(\begin{matrix}e^{\chi_0}\mathbf a_1\mathbf a_2&0\\0&(e^{\chi_0}\mathbf a_1\mathbf a_2)^{-1}\end{matrix}\right)u(g),\end{equation}
where $m_0=e^{\chi_0}\in S^1$, $a_0=a_1a_2>0$,
\begin{equation}\label{lmatrix}l(g):=\left(\begin{matrix}l_{11}&l_{12}\\l_{21}&l_{22}\end{matrix}\right)
=\left(\begin{matrix} \alpha_1^{*}&\gamma_1^{*}\\
\beta_1^{*}&\delta_1^{*}\end{matrix}
\right)\left(\begin{matrix}e^{-\chi^{*}}&0\\0&e^{\chi^*}\end{matrix}\right)\left(\begin{matrix}
1&a_1^2e^{2\chi_0}P_-(ye^{2\chi^{*}}+x^*e^{2\chi})\\
0&1\end{matrix} \right)\end{equation} and
\begin{equation}\label{umatrix}u(g):=\left(\begin{matrix}u_{11}&u_{12}\\u_{21}&u_{22}\end{matrix}\right)
= \left(\begin{matrix}
1&a_2^{-2}e^{-2\chi_0}P_+(ye^{2\chi^*}+x^*e^{2\chi})\\0&1\end{matrix}\right)
\left(\begin{matrix}e^{\chi}&0\\0&e^{-\chi}\end{matrix}\right)
\left(\begin{matrix} \alpha_2&\beta_2\\
\gamma_2&\delta_2\end{matrix}\right)\end{equation}

Since
$$g^{-*}(z)=(g_1^{-*})^*(z)\left(\begin{matrix} e^{-\chi^*(z)}&0\\
0&e^{\chi^*(z)}\end{matrix}\right)g_2^{-*}(z),$$ where
$$g_1^{-*}(z)=\left(\begin{matrix} d_1(z)&-c_1(z)\\
-b_1^*(z)&a_1^*(z)\end{matrix} \right),\quad g_2^{-*}(z)=\left(\begin{matrix} d^*_2(z)&-c^*_2(z)\\
-b_2(z)&a_2(z)\end{matrix} \right),\quad z\in S^1,$$
have precisely the same form as $g_1$ and $g_2$, $g^{-*}$ has a triangular factorization. To make the notation manageable,
we will write
$l(g^{-*})=l'$ and so on, for the factors corresponding to $g^{-*}$.
Notation aside, we see that (ii) implies (i).

To explain how the converse is proved, we will first show how $g_1,g_2$ and $\chi$ are recovered from the
triangular factorizations for $g$ and $g^{-*}$.

From the triangular factorizations of $g$ and $g^{-*}$ we obtain four relatively simple pairs of equations.
The first two come from considering the first columns of $l(g)$ and $l(g^{-*})$:
$$\left(\begin{matrix}l_{11}\\l_{21}\end{matrix}\right)=e^{\chi_-}\left(\begin{matrix}\alpha_1^*\\\beta_1^*\end{matrix}\right), \quad \left(\begin{matrix}l_{11}(g^{-1})\\l_{21}(g^{-*})\end{matrix}\right)=e^{-\chi_+^*}\left(\begin{matrix}(\alpha_1')^*\\(\beta_1')^*\end{matrix}\right) $$
The second two come from considering the second rows of $u(g)$ and $u(g^{-*})$:
$$(u_{21},u_{22})=e^{-\chi_+}(\gamma_2,\delta_2), \quad (u_{21}(g^{-*}),u_{22}(g^{-*}))=e^{\chi_-^*}(\gamma_2',\delta_2')$$
The first two column equations imply
$$l_{11}l_{11}'-l_{21}l_{21}'=e^{-\chi_++\chi_-^*}(\alpha_1^*\alpha_1'-\beta_1^*\beta_1')$$
$$=e^{-\chi_++\chi_-^*}(\mathbf a_1^*\mathbf a_1')^{-1}(a_1^*d_1-b_1^*c_1)=e^{-\chi_++\chi_-^*}(\mathbf a_1^*\mathbf a_1')^{-1}$$

Similarly the second two row equations imply
$$u_{21}u_{21}'-u_{22}u_{22}'=e^{-\chi_++\chi_-}(\gamma_2(\gamma_2')^*-\delta_2(\delta_2')^*)$$
$$=e^{-\chi_++\chi_-^*}\mathbf a_2(\mathbf a_2')^* (a_2^*d_2-b_2^*c_2)=e^{-\chi_++\chi_-^*}\mathbf a_2(\mathbf a_2')^*$$
These two equations determine $\chi_{\pm}$, and the products $\mathbf a_1^*\mathbf a_1'$ and $\mathbf a_2(\mathbf a_2')^*$,
provided that the consistency condition
$$\mathbf a_1^*\mathbf a_1'(l_{11}l_{11}'-l_{21}l_{21}')=(\mathbf a_2(\mathbf a_2')^*)^{-1}(u_{21}u_{21}'-u_{22}u_{22}')$$
is satisfied. We also obtain
$$a_1=\mathbf a_1^* e^{-\chi_-^*}l_{11}^*, b_1=\mathbf a_1^* e^{-\chi_-^*}l_{21}^*$$
$$c_1=-(\mathbf a_1')^* e^{\chi_+}l_{21}^*, d_1=(\mathbf a_1')^* e^{\chi_+}l_{11}^* $$
$$ a_2=(\mathbf a_2')^{-1} e^{-\chi_-^*}u_{22}',b_2=-(\mathbf a_2')^{-1} e^{-\chi_-^*}u_{21}'$$
and
$$c_2=(\mathbf a_2)^{-1} e^{\chi_+}u_{21}, d_2=(\mathbf a_2)^{-1} e^{\chi_+}u_{22} $$

To check that the consistency condition is satisfied one spells out the equation $g(g^{-*})^*=1$ in terms of the
LDU factorizations. This is basically the same as in the unitary case, and we will not write out the details.

\end{proof}

\begin{remarks} (a) In a generic situation there is a direct way to find the $\eta$ and $\zeta$ factors which bypasses
having to find $\chi$. For example
$$u_{21}(g)/u_{22}(g)=c_2/d_2=\zeta^+_1 z+\zeta^+_2(1-\zeta^-_1\zeta^+_1)z^2$$
$$+\left(\zeta^+_3(1-\zeta^-_1\zeta^+_1)(1-\zeta^-_2\zeta^+_2)+
\zeta^-_1(\zeta^+_2)^2(1-\zeta^-_1\zeta^-_1)\right)z^3+...$$
and
$$u_{21}(g^{-*})/u_{22}(g^{-*})=-b_2/a_2=\zeta^-_1 z^{-1}+\zeta^-_2(1-\zeta^-_1\zeta^+_1)z^{-2}$$
$$+\left(\zeta^-_3(1-\zeta^-_1\zeta^+_1)(1-\zeta^-_2\zeta^+_2)+
\zeta^+-_1(\zeta^-_2)^2(1-\zeta^-_1\zeta^-_1)\right)z^{-3}+...$$
and we can solve for the $\zeta$ variables recursively. Similarly
$$l_{21}(g)/l_{11}(g)=b_1^*/a_1^*\quad  \text{  and } \quad l_{21}(g^{-*})/l_{11}(g^{-*})=-c_1^*/c_1^*$$
and we can solve for the $\eta $ variables recursively.

(b) The arguments we have given show that the $\eta$ and $\zeta$ factors are uniquely determined. However it is still necessary to
choose the square roots for the $\mathbf a$ factors. As a consequence the $\mathbf a_1$ and $\mathbf a_2$ factors are not uniquely determined, unlike the unitary case (compare with \cite{Pi3}).

\end{remarks}

\subsection{Toeplitz Determinants}

\begin{theorem}
Suppose that $g\in C^s(S^1,SL(2,\mathbb C))$, $s>1/2$, and $g$ has a triangular factorization. Then
\begin{equation}\label{det2}det(A(g)A(g^{-1}))=det(A(g_1)A(g_1^{-1}))det(A(\lambda)A(\lambda^{-1}))det(A(g_2A(g_2^{-1}))\end{equation}
If $g_1$ and $g_2$ have root subgroup factorizations, then
$$det(A(g)A(g^{-1}))=\left(\prod_{i=1}^{\infty}(1-\eta_i^-\eta^+_i)^{-i}\right)^*exp(2\sum_{j=1}^{\infty}j
\chi_j\chi_{-j})\prod_{k=1}^{\infty}(1-\zeta_k^-\zeta_k^+)^{-k}.$$
\end{theorem}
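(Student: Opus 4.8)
The plan is to establish the factorization identity~(\ref{det2}) first, and then evaluate each of the three factors separately using results already available in the text. For the factorization, I would start from the decomposition $g=g_1^*\lambda g_2$ furnished by Theorem~\ref{maintheorem}, where $\lambda=\mathrm{diag}(e^{\chi},e^{-\chi})$. The key structural fact is that $g_1^*$ has a Birkhoff factorization with trivial middle term of the type ``lower triangular times upper'' (it lies in the big cell with $g_-=1$ slot filled, since by (I.1)/(I.3) its triangular factorization has $l$ with $1$ in the $(1,1)$ entry), and dually $g_2$ has a Birkhoff factorization of the opposite type. Concretely, writing $M_g=M_{g_1^*}M_\lambda M_{g_2}$ and passing to Toeplitz compressions, one uses the general principle that if $h_-$ is analytic in $\Delta^*$ with $A(h_-)$ triangular (so $A(h_-)$ is invertible with $A(h_-)^{-1}=A(h_-^{-1})$ and $C(h_-)$ interacts simply), then $A(h_- k)=A(h_-)A(k)$ up to a term involving $B(h_-)C(k)$ that vanishes, and symmetrically on the other side. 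The upshot is the multiplicativity $A(g)=A(g_1^*)A(\lambda)A(g_2)$ modulo operators that do not affect the (regularized) determinant, and the same for $g^{-1}=g_2^{-1}\lambda^{-1}g_1^{-*}$; taking determinants of the products $A(g)A(g^{-1})$ and regrouping gives~(\ref{det2}). Here the hypothesis $s>1/2$ guarantees (via $\mathcal B\subset W^{1/2}\cap$ Wiener, and Corollary~\ref{decomposing}) that all Hankel operators $B,C$ appearing are Hilbert--Schmidt, so the determinants $\det(A(\cdot)A(\cdot^{-1}))=\det(1-B(\cdot)C(\cdot^{-1}))$ make sense and behave multiplicatively.

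Next I would evaluate the three factors. The middle factor $\det(A(\lambda)A(\lambda^{-1}))$ is the classical strong Szeg\H{o} limit theorem applied to the scalar symbol $e^{\chi}$ on the first diagonal slot and $e^{-\chi}$ on the second: $\det(A(e^{\chi})A(e^{-\chi}))=\exp(\sum_{j\ge1} j\,\chi_j\chi_{-j})$ for each of the two diagonal entries, and since $\lambda$ is diagonal these multiply to give $\exp(2\sum_{j\ge1} j\,\chi_j\chi_{-j})$. For the two outer factors, I invoke Theorem~\ref{det1} directly: for $g_2$ with root subgroup factorization, $\det(A(g_2)A(g_2^{-1}))=\prod_{k\ge1}(1-\zeta_k^-\zeta_k^+)^{-k}$, and for $g_1$, $\det(A(g_1)A(g_1^{-1}))=\prod_{i\ge1}(1-\eta_i^-\eta_i^+)^{-i}$. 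The only subtlety is the complex conjugation: in the decomposition $g=g_1^*\lambda g_2$ it is $g_1^*$, not $g_1$, that appears, and $\det(A(g_1^*)A(g_1^{-*}))=\overline{\det(A(g_1)A(g_1^{-1}))}$ by the general identity $A(h^*)=A(h)^*$ together with $\det(\overline{T})=\overline{\det T}$; this accounts for the superscript $*$ on the first product in the stated formula. Assembling the three pieces yields the claimed product formula.

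The main obstacle will be the first step: justifying that the Toeplitz determinant is genuinely multiplicative across the factorization $g=g_1^*\lambda g_2$, i.e.\ that the ``cross terms'' (Hankel-operator contributions $B(g_1^*)C(\lambda g_2)$ and the like) either vanish or cancel in the determinant. This is where the special triangular structure of $g_1^*$ and $g_2$ is essential — one must exploit that $g_1$ satisfies (I.3), so that $A(g_1^*)$ is block-triangular in a way that forces the relevant Hankel operators to have the right one-sided vanishing, and likewise for $g_2$ via (II.3). In the unitary case this is handled in \cite{PP}; here I would argue that the relevant algebraic identities are insensitive to unitarity and follow by analytic continuation, exactly as in the proof of Theorem~\ref{det1}, since both sides of~(\ref{det2}) are holomorphic in the root subgroup coordinates $\eta,\zeta$ and in $\chi$ on the (Zariski-dense) locus where the factorization exists, and they agree on the real locus corresponding to unitary loops. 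A careful statement of this continuation argument — specifying the domain, the holomorphy of the determinants as functions of finitely many coordinates after truncation, and the limit $N\to\infty$ — is the part that requires genuine care rather than routine computation.
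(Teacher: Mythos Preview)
Your proposal is correct and matches the paper's (largely implicit) approach: the paper does not write out a proof of this theorem, but the ingredients you identify---Theorem~\ref{det1} for the outer factors, the Szeg\H{o} formula for the diagonal $\lambda$, multiplicativity via the vanishing of $B(g_1^*)C(g_2)$ (which the paper proves explicitly as Theorem~\ref{SU(2)theorem14} in the Riemann surface section), and analytic continuation from the unitary case---are exactly those the paper makes available and invokes for the parallel Theorem~\ref{SU(2)theorem13}. Your handling of the complex conjugate on the $g_1$ factor is also the right explanation for the $*$ appearing on the first product.
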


\section{Riemann Surfaces and Factorization}\label{BPcase}

In this section we will note how the statements in \cite{BP} can be generalized to $SL(2,\mathbb C)$ valued loops.
The proofs in that paper are easily modified in the same ways as we have indicated above.

Suppose that $\Sigma$ is a connected compact Riemann surface with nonempty
boundary $S$ (a disjoint union of circles). Let $\widehat{\Sigma}$
denote the double,
$$\widehat{\Sigma}=\Sigma^* \circ \Sigma,$$
where $\Sigma^*$ is the adjoint of $\Sigma$, i.e. the surface $\Sigma$
with the orientation reversed, and the composition is sewing along
the common boundary $S$. Let $R$ denote the antiholomorphic
involution (or reflection) fixing $S$.

In previous sections we have considered `the classical case' $\Sigma=D$, the closed unit disk. In this case
$S=S^1$, $\widehat{\Sigma}$ is isomorphic to the Riemann sphere, and (in this
realization)
$R(z)=1/z^*$, where $z^*=\overline{z}$, the complex conjugate.
This example has the exceptional feature that there is a
large automorphism group, $PSU(1,1)$, acting by linear
fractional transformations.

We now choose a basepoint, denoted by $(0)$, in the
interior of $\Sigma$, and we let $(\infty)$ denote the reflected basepoint for
$\Sigma^*$. In the classical case, without loss of generality because of the $PSU(1,1)$ symmetry,
we can assume the basepoint is $z=0$. Given the data $(\Sigma,(0))$, following ideas
of Krichever and Novikov, a reasonable function on $S$ has a `linear triangular factorization'
\begin{equation}\label{LinearTF}f=f_-+f_0+f_+\end{equation}
where $f_{\pm}$ is holomorphic in the interior of $\Sigma$ ($\Sigma^*$, respectively), with appropriate boundary
behavior, depending on the smoothness of $f$, $f_{+}((0))=0$, $f_{-}((\infty))=0$, and $f_0$ is the restriction
to $S$ of a meromorphic function which
belongs to a $\text{genus}(\widehat{\Sigma})+1$ dimensional complementary subspace, which we refer to
as the vector space of zero modes (see Proposition 2.3 of \cite{BP}).
In the classical case $f_0$ is the zero mode for the Fourier series of $f$.

A holomorphic map $\mathfrak z:\widehat{\Sigma}\to \widehat{D}$ is said to be strictly equivariant
if it satisfies
\begin{equation}\label{dfnequivfn}\mathfrak z(R(q))=\frac1{\mathfrak z(q)^*}\end{equation}
and maps $\Sigma$ to $D$ (and hence
$\Sigma^*$ to $D^*$). When we refer to the classical case ($\Sigma=D$),
it will be understood that $\mathfrak z(z)=z$. For a function $f:U\subset \widehat{\Sigma}\to \mathcal L(\mathbb C^N)$, define
$f^*(q)=f(R(q))^*$, where $(\cdot)^*$ is the Hermitian adjoint. If $f\in H^0(\Sigma)$ (i.e. a holomorphic function in
some open neighborhood of $\Sigma$), then $f^*\in H^0(\Sigma^*)$. If $q\in S$, then $f^*(q)=f(q)^*$, the ordinary
complex conjugate of $f(q)$.

\begin{theorem}\label{SU(2)theorem11} Suppose that $g_1 \in C^{\infty}(S,SL(2,\mathbb C))$. Consider the following
three conditions:

(I.1) $g_1$ is of the form
$$g_1(z)=\left(\begin{matrix} a_1(z)&b_1(z)\\
c_1^*(z)&d_1^*(z)\end{matrix} \right),\quad z\in S,$$ where $a_1,b_1,c_1$ and
$d_1$ are boundary values of holomorphic function in $\Sigma$ with $a_1((0))$ and $d_1((0))$ nonzero,
and the pairs $a_1$ and $b_1$, and $c_1$ and $d_1$,
do not simultaneously vanish at a point in $\Sigma$.

(I.2) $g_1$ has a `root subgroup factorization' of the form
$$g_1(z)=\lim_{n\to\infty}\mathbf a(\eta_n)\left(\begin{matrix} 1&\eta^+_n\mathfrak z^n\\
\eta_n\mathfrak z^{-n}&1\end{matrix} \right)..\mathbf
a(\eta_0)\left(\begin{matrix} 1&
\eta^+_0\\
\eta_0&1\end{matrix} \right),$$ for some rapidly decreasing sequence
$\{\eta_0,..,\eta_n,..\}$ of pairs of complex numbers, and for some strictly equivariant
function $\mathfrak z$ with $\mathfrak z((0))=0$.

(I.3) $g_1$ and $g_1^{-*}$ have (multiplicative triangular) factorizations of the form
$$\left(\begin{matrix} 1&0\\ y^*(z)+y_0(z)&1\end{matrix} \right)\left(\begin{matrix} \mathbf a_1&0\\
0&\mathbf a_1^{-1}\end{matrix} \right)\left(\begin{matrix} \alpha_1 (z)&\beta_1 (z)\\
\gamma_1 (z)&\delta_1 (z)\end{matrix} \right),$$ where $\mathbf a_1\ne 0$,
the third factor is a $SL(2,\mathbb C)$-valued holomorphic function in $\Sigma$ which is unipotent upper
triangular at $0$, $y=y_+$ is holomorphic in $\Sigma$, and $y_0$ is as in (\ref{LinearTF}).

Then (I.2) implies (I.1) and (I.3) (with $y_0=0$),
and (I.1) and (I.3) are equivalent.

Similarly, consider $g_2 \in C^{\infty}(S,SU(2))$ and the following statements:

(II.1) $g_2$ is of the form
$$g_2(z)=\left(\begin{matrix} a_2^{*}(z)&b_2^{*}(z)\\
c_2(z)&d_2(z)\end{matrix} \right),\quad z\in S^1,$$ where $a_2,b_2,c_2$ and $d_2$
are boundary values of holomorphic functions in $\Sigma$, $b_2((0))=c_2((0))=0$, and $d_2((0))\ne 0$,
and the pairs $a_2$ and $b_2$, and $c_2$ and $d_2$,
do not simultaneously vanish at a point in $\Sigma$.

(II.2) $g_2$ has a `root subgroup factorization' of the form
$$g_2(z)=\lim_{n\to\infty}\mathbf a(\zeta_n)\left(\begin{matrix} 1&\zeta_n\mathfrak z^{-n}\\
\zeta^+_n\mathfrak z^n&1\end{matrix} \right)..\mathbf
a(\zeta_1)\left(\begin{matrix} 1&
\zeta_1\mathfrak z^{-1}\\
\zeta^+_1\mathfrak z&1\end{matrix} \right),$$ for some rapidly decreasing sequence
$\{\zeta_1,..,\zeta_n,..\}$ of complex numbers, and for some strictly equivariant $\mathfrak z$ with
$\mathfrak z((0))=0$.

(II.3) $g_2$ and $g_2^{-*}$ have (triangular) factorizations of the form
$$\left(\begin{matrix} 1& x^*(z)+x_0(z)\\
0&1\end{matrix} \right)\left(\begin{matrix} a_2&0\\
0&a_2^{-1}\end{matrix} \right)\left(\begin{matrix} \alpha_2 (z)&\beta_2 (z)\\
\gamma_2 (z)&\delta_2 (z)\end{matrix} \right), $$ where $\mathbf a_2\ne 0$, the third factor is a
$SL(2,\mathbb C)$-valued holomorphic function which is unipotent
upper triangular at $(0)$, $x=x_+$ is holomorphic
in $\Sigma$, and $x_0$ is as in (\ref{LinearTF}).

Then (II.2) implies (II.1) and (II.3) (with $x_0=0$), and (II.1) and (II.3) are equivalent.

\end{theorem}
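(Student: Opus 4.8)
The plan is to reduce Theorem \ref{SU(2)theorem11} to the classical case already treated in Theorems \ref{SU(2)theorem1smooth} and \ref{SU(2)theorem4smooth}, by replacing Fourier analysis on $S^1$ with the Krichever--Novikov style linear triangular factorization (\ref{LinearTF}) attached to the pointed surface $(\Sigma,(0))$, exactly as is done for the unitary case in \cite{BP}. Since the two sets of conditions are again intertwined by an outer involution (the surface analogue of $\sigma$ built from a fixed strictly equivariant $\mathfrak z$), I would prove only the second set $(II.1)\Leftrightarrow(II.3)$ and $(II.2)\Rightarrow(II.1),(II.3)$, and remark that the first set follows by applying the involution.

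First I would establish $(II.3)\Rightarrow(II.1)$, which is purely formal: multiplying out the two displayed factorizations for $g_2$ and $g_2^{-*}$ gives $c_2=\mathbf a_2^{-1}\gamma_2$, $d_2=\mathbf a_2^{-1}\delta_2$, holomorphic in $\Sigma$ and not simultaneously vanishing (since the third factor lies in $SL(2,\mathbb C)$), with $c_2((0))=0$, $d_2((0))=\mathbf a_2^{-1}\ne0$ by the unipotent normalization at $(0)$; the second factorization gives the analogous statements for $a_2,b_2$. For $(II.1)\Rightarrow(II.3)$ I would follow the proof of Theorem \ref{SU(2)theorem1smooth} line by line, making the following substitutions: the Hardy polarization $H=H^+\oplus H^-$ of $L^2(S^1)$ is replaced by the polarization of $L^2(S)$ coming from (\ref{LinearTF}), with the zero-mode space of dimension $\mathrm{genus}(\widehat\Sigma)+1$ now appearing as the extra finite-dimensional piece; the operator $T\colon x\mapsto((c_2 x)_-,(d_2 x)_-)$ is still the restriction of $D(g_2)^*$ to an appropriate subspace, hence injective with closed range by the same argument (nonvanishing of a certain Toeplitz-type kernel, using that $c_2,d_2$ do not simultaneously vanish in $\Sigma$ and that $g_2\in SL(2,\mathbb C)$). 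The only genuinely new feature is that the off-diagonal entry of the lower-triangular factor is now $x^*(z)+x_0(z)$ rather than a pure $H^-$ function, i.e. it has a zero-mode component $x_0$; this is forced because the complementary subspace in (\ref{LinearTF}) is not just the negative Fourier modes. Correspondingly $\mathbf a_2=d_2((0))^{-1}$ is still forced, one solves for $x=x_+$ and $x_0$ together from the analogue of (\ref{eqn2}), and then $\alpha_2,\beta_2$ from the analogue of (\ref{eqn1}). Smoothness of all factors follows from the surface analogue of Corollary \ref{decomposing}, i.e. continuity of the projections in (\ref{LinearTF}) on $C^\infty(S)$, which is established in \cite{BP}. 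The same argument applied to $g_2^{-*}$, which has the same matrix shape, yields its factorization.

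Next I would treat $(II.2)\Rightarrow(II.1)$ and $(II.2)\Rightarrow(II.3)$. Given a strictly equivariant $\mathfrak z$ with $\mathfrak z((0))=0$, each building block $\mathbf a(\zeta_k)\left(\begin{smallmatrix}1&\zeta_k\mathfrak z^{-k}\\ \zeta_k^+\mathfrak z^k&1\end{smallmatrix}\right)$ has exactly the block structure of the classical one with $z$ replaced by $\mathfrak z$; since $\mathfrak z$ maps $\Sigma\to D$ holomorphically, $\mathfrak z^k$ is holomorphic in $\Sigma$ and $\mathfrak z^{-k}$ is holomorphic in $\Sigma^*$ (vanishing at $(\infty)$). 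Taking the limit of the partial products as in Lemma \ref{keylemma} — whose combinatorial content is insensitive to whether the variable is $z$ or $\mathfrak z$ — one reads off that $g_2$ has the form in $(II.1)$, with $a_2,b_2,c_2,d_2$ holomorphic in $\Sigma$, $c_2((0))=b_2((0))=0$ (each positive power of $\mathfrak z$ vanishes at $(0)$), $d_2((0))=a_2^*((\infty))=\prod_k\mathbf a(\zeta_k)\ne0$, and the non-simultaneous-vanishing conditions holding because each finite partial product lies in $SL(2,\mathbb C)$ and the limit is uniform. The same computation exhibits the triangular factorization of $g_2$ directly (the lower-triangular off-diagonal entry being the $\mathfrak z^{-}$-part of $\gamma_2/\delta_2$, purely negative, hence $x_0=0$), and applying the involution/inverse-adjoint gives the factorization of $g_2^{-*}$; this proves $(II.2)\Rightarrow(II.1),(II.3)$.

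The main obstacle, and the place requiring the most care, is the functional-analytic core of $(II.1)\Rightarrow(II.3)$: verifying that in the Krichever--Novikov polarization the operator $T$ still has closed range and that the relevant kernel $\ker(T^*)$ is spanned by $\lambda^*(d_2^*,-c_2^*)$ with $\lambda^*$ holomorphic in $\Sigma^*$ and vanishing at $(\infty)$ — the subtlety being the precise accounting of the zero-mode subspace, so that the orthogonality $((a_2^*)_-, b_2^*)\perp\ker(T^*)$ comes out correctly (the classical computation used $\int\lambda\,d\theta=0$ because $\lambda((0))=0$; here one needs the analogous pairing against the zero modes to vanish, which is where $a_1((0))$, $d_1((0))$ nonzero and the $x_0$ component enter). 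Once the polarization estimates from \cite{BP} are invoked this is routine but it is the step where the genus of $\widehat\Sigma$ actually matters, and I would present it in full rather than by analogy. Finally, the non-compactness of $SL(2,\mathbb C)$ means one cannot take boundedness of $g_i$ for granted, but since the hypotheses already put $g_i\in C^\infty(S)$, this causes no trouble in the present $C^\infty$ statement; I would simply note that the delicate critical-exponent issues flagged in the introduction do not arise here.
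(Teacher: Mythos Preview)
Your proposal is correct and is precisely the paper's own approach: the paper gives no argument for this theorem beyond the sentence ``the proofs in \cite{BP} are easily modified in the same ways as we have indicated above,'' and your sketch carries out exactly that modification, correctly flagging the Krichever--Novikov zero-mode subspace (and the attendant $x_0$ term) as the only substantive new feature over the disk case.

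One small point deserves care. Your reduction of case (I) to case (II) via a ``surface analogue of $\sigma$'' built from a strictly equivariant $\mathfrak z$ is not automatic: unless $\widehat\Sigma$ has genus zero, $\mathfrak z$ is a branched covering of degree $>1$, so it has zeros in $\Sigma$ other than $(0)$, and then $c_2\mathfrak z^{-1}$ need not extend holomorphically to $\Sigma$. Thus $\sigma_{\mathfrak z}$ does not in general carry condition (II.1) to (I.1). This does not damage your overall plan---the (I) block can simply be proved directly by the same template, just as the paper does in the classical setting (treating (II) in Theorem \ref{SU(2)theorem1smooth} and (I) separately in Theorem \ref{SU(2)theorem4smooth})---but you should either justify the involution carefully or drop that shortcut and run the parallel argument for (I).
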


\begin{theorem}\label{SU(2)theorem12} Suppose $g\in C^{\infty}(S,SL(2,\mathbb C))$. If $g$ has a factorization
$$g(z)=g_1^*(z)\left(\begin{matrix} e^{\chi(z)}&0\\
0&e^{-\chi(z)}\end{matrix}\right)g_2(z),$$ where $\chi \in
C^{\infty}(S,\mathbb C)$, and $g_1$ and $g_2$ are as in (I.1) and (II.1), respectively, of Theorem \ref{SU(2)theorem1},
then $E(g)$, the holomorphic $SL(2,\mathbb C)$ bundle on $\widehat{\Sigma}$
defined by $g$ as a transition function, is semi-stable.

\end{theorem}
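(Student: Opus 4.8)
The plan is to analyze the holomorphic bundle $E(g)$ on the double $\widehat\Sigma$ directly through the factorization hypothesis, the point being that the hypothesized factorization $g=g_1^*\,\mathrm{diag}(e^\chi,e^{-\chi})\,g_2$ is exactly the data needed to exhibit $E(g)$ together with a filtration having controlled degrees. Recall that $E(g)$ is built by gluing the trivial rank-two bundle over $\Sigma$ to the trivial rank-two bundle over $\Sigma^*$ along $S$ using $g$ as transition function. Since $g_1$ and $g_2$ are boundary values of $SL(2,\mathbb C)$-valued holomorphic functions on $\Sigma$ (by (I.1) and (II.1), with the non-simultaneous-vanishing ensuring invertibility throughout $\Sigma$), multiplying the transition function on the $\Sigma$ side by $g_2(z)^{-1}$ and on the $\Sigma^*$ side by $(g_1^*(z))^{-1}=g_1(R(z))^{*-1}$ — which is holomorphic and invertible on $\Sigma^*$ — does not change the isomorphism class of $E(g)$. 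Hence $E(g)\cong E(\mathrm{diag}(e^\chi,e^{-\chi}))=L_\chi\oplus L_\chi^{-1}$, where $L_\chi$ is the line bundle on $\widehat\Sigma$ with transition function $e^\chi$ across $S$.

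First I would make the reduction above precise: verify that $g_1^*$ extends holomorphically and invertibly over $\Sigma^*$ and $g_2$ over $\Sigma$, using that under the reflection $R$ and the $(\cdot)^*$ operation a function holomorphic on $\Sigma$ has $f^*$ holomorphic on $\Sigma^*$ (stated in the paragraph preceding Theorem \ref{SU(2)theorem11}), and that the non-simultaneous-vanishing conditions in (I.1) and (II.1), together with $\det = 1$, force $g_1, g_2$ to be pointwise invertible on all of $\Sigma$. Second, I would identify $E(g)$ with the split bundle $L_\chi\oplus L_\chi^{-1}$. Third, I would compute $\deg L_\chi$: since $L_\chi$ and $L_\chi^{-1}$ are dual, $\deg L_\chi + \deg L_\chi^{-1}=0=\deg E(g)$, so each summand has degree equal to $\pm\deg L_\chi$, and in particular $\deg L_\chi$ together with $-\deg L_\chi$ are the only slopes of line subbundles coming from this splitting. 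Fourth, I would invoke the definition of semistability: $E(g)$ is semistable iff every line subbundle $L\subset E(g)$ has $\deg L\le 0$ (the slope of $E(g)$). Since $E(g)$ is a direct sum of two line bundles of opposite degree $d$ and $-d$, it is semistable precisely when $d=0$; but in fact we need only observe that a direct sum $L\oplus L^{-1}$ is \emph{always} semistable of slope $0$ when $\deg L = 0$, and when $\deg L = d\neq 0$ it is the Harder–Narasimhan-unstable bundle $\mathcal O(d)\oplus\mathcal O(-d)$ pulled back — so I need to pin down why $\deg L_\chi=0$.

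The key point making $\deg L_\chi = 0$ is that $\chi\in C^\infty(S,\mathbb C)$ is a genuine function on the boundary circles, not a loop with winding: the transition function $e^{\chi}$ is homotopic through nonvanishing functions to the constant $1$ (since $\chi$ is single-valued, $e^\chi$ has zero winding number around each component of $S$), hence $L_\chi$ is topologically trivial and $\deg L_\chi = 0$. Equivalently, $L_\chi$ admits the global trivialization obtained by writing $\chi = \chi_+ + \chi_0 + \chi_-$ as in \eqref{LinearTF} — patching $e^{-\chi_+-\chi_0^{(+)}}$ on $\Sigma$ with $e^{\chi_-+\chi_0^{(-)}}$ on $\Sigma^*$ (absorbing the zero-mode part appropriately) shows $L_\chi$ is trivial, up to the contribution of $\chi_0$, which lives in the finite-dimensional space of zero modes and contributes a degree-zero correction. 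Therefore $E(g)\cong L_\chi\oplus L_\chi^{-1}$ with $\deg L_\chi=0$, so both summands have slope $0$, every line subbundle has degree $\le 0$, and $E(g)$ is semistable.

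The main obstacle I anticipate is the bookkeeping around the zero-mode term $\chi_0$ and the precise meaning of $\deg$ on $\widehat\Sigma$ for a bundle glued along several circles: one must check carefully that the linear triangular factorization \eqref{LinearTF} of $\chi$ really does trivialize $L_\chi$ up to degree zero, i.e. that the zero-mode subspace (which is $\text{genus}(\widehat\Sigma)+1$ dimensional and contains meromorphic functions with prescribed pole behavior at $(0)$ and $(\infty)$) does not secretly introduce nonzero degree. This is exactly the point where the hypothesis that $\chi$ comes from a function on $S$ (and not an element of the full loop algebra) is used, and it should follow from the construction of the zero-mode space in Proposition 2.3 of \cite{BP}; I would cite that and the standard fact that the degree of a bundle on a curve glued from trivial pieces along a real-analytic curve equals the total winding of the (scalar part of the) transition function, which vanishes here.
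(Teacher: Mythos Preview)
Your reduction step rests on a misreading of (I.1) and (II.1). You claim that $g_1$ and $g_2$ extend as holomorphic $SL(2,\mathbb C)$-valued functions over $\Sigma$, but look at (II.1) again: $g_2=\left(\begin{smallmatrix}a_2^*&b_2^*\\c_2&d_2\end{smallmatrix}\right)$ with $a_2,b_2,c_2,d_2$ holomorphic in $\Sigma$. The top-row entries $a_2^*,b_2^*$ are therefore holomorphic in $\Sigma^*$, \emph{not} in $\Sigma$; the same mixed structure holds for $g_1$ and hence for $g_1^*$. So neither $g_2$ nor $g_1^*$ can be absorbed into a trivialization on one side of $S$, and your conclusion $E(g)\cong L_\chi\oplus L_\chi^{-1}$ is unsupported. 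In general it is false: on a positive-genus $\widehat\Sigma$ the bundle $E(g)$ need not split as a direct sum of line bundles at all.

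The paper itself gives no proof here; it defers to the analogous $SU(2)$ result in \cite{BP}, modified as in the earlier sections. The intended route, modeled on the computation in the proof of Theorem~\ref{maintheorem} (equations (\ref{lmatrix})--(\ref{umatrix})), uses the equivalent conditions (I.3) and (II.3) rather than (I.1), (II.1) directly. Writing $g_1=L_1D_1U_1$ and $g_2=L_2D_2U_2$ as in those triangular factorizations, the factors $U_1^*$ and $U_2$ \emph{do} extend holomorphically over $\Sigma^*$ and $\Sigma$ respectively and can be absorbed; what remains is an upper-triangular transition function whose diagonal is a constant times $e^{\pm\chi}$. This exhibits $E(g)$ as an extension $0\to L_{e^\chi}\to E(g)\to L_{e^{-\chi}}\to 0$. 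Since both line bundles have degree zero (your winding-number observation is correct and is exactly what is needed here), every line subbundle of $E(g)$ maps nontrivially either into $L_{e^\chi}$ or to $L_{e^{-\chi}}$ and hence has degree $\le 0$, giving semistability. Your overall shape was right, but the mechanism is an \emph{extension} arising from the triangular factorizations of $g_1,g_2$, not a direct-sum splitting arising from holomorphic extension of $g_1^*$ and $g_2$ themselves.
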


\begin{remark}It is very unlikely that there exists a converse to this statement, because it appears that there is a more
general condition on a multi-loop $g$ also implies that $E(g)$ is semi-stable; see Theorem 4.3 of \cite{BP}. We will not include it here because it is more difficult to state, we lack specific examples, and we have not established the converse.
\end{remark}

\subsection{Spin Toeplitz Operators}

Assume that $\Sigma$ has a spin structure. There is an induced spin structure for $\widehat{\Sigma}$ which has an
anti-holomorphic reflection symmetry compatible with $R$. We additionally assume that the $\overline \partial$
operator for spinors on $\widehat \Sigma$ is invertible. In this case there is a (pre-)Hilbert space polarization
for the space of ($\mathbb C^2$ valued) spinors along $S$,
$$\Omega^{1/2}(S)\otimes\mathbb C^2=H^{1/2}(\Sigma)\otimes\mathbb C^2\oplus H^{1/2}(\Sigma^*)\otimes\mathbb C^2$$
where $H^{1/2}(\Sigma)$ denotes the space of holomorphic spinors on $\Sigma$. Given a (measurable) loop $g:S\to SU(2)$, there is an
associated unitary multiplication operator $M_g$ on $\Omega^{1/2}(S)\otimes\mathbb C^2$, and relative to the polarization
$$M_g=\left(\begin{matrix}A(g)&B(g)\\C(g)&D(g)\end{matrix}\right)$$
In the classical case $A(g)$ ($B(g)$) is the classical block Toeplitz operator (Hankel operator, respectively),
associated to the symbol $g$. In general the `spin Toeplitz (Hankel) operators' $A(g)$ ($B(g)$, respectively) have
many of the same qualitative properties
as in the classical case, because the projection $\Omega^{1/2}\to H^{1/2}$ differs from the classical projection
by a smoothing operator.

The next result follows by analytic continuation from the unitary case.

\begin{theorem}\label{SU(2)theorem13} Suppose that $g:S\to SL(2,\mathbb C)$ (is smooth and) has a factorization as in Theorem \ref{SU(2)theorem12}.
Then for any choice of spin structure for which $\overline{\partial}$ is invertible,
 $$det(A(g)A(g^{-1}))=det(A(g_1)A(g_1^{-1}))det(\dot A(e^{\chi}) \dot A(e^{-\chi}))^2det(A(g_2)A(g_2^{-1}))$$
where in the middle factor $\dot A(e^{\chi})$ is the compression to $H^{1/2}(\Sigma)$ of $e^{\chi}$
as a (scalar) multiplication operator on $\Omega^{1/2}(S)$ (which accounts for the square of this factor).

\end{theorem}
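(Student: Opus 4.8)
The plan is to deduce the determinant factorization by analytic continuation from the unitary case, exactly as indicated in the statement, using the factorization $g=g_1^*\lambda g_2$ from Theorem \ref{SU(2)theorem12} together with a multiplicativity (cocycle) property of the determinant $\sigma_0=\det(A)$ on the central extension. First I would recall the starting point: when $g:S\to SU(2)$ is unitary, the identity
$$\det(A(g)A(g^{-1}))=\det(A(g_1)A(g_1^{-1}))\,\det(\dot A(e^{\chi})\dot A(e^{-\chi}))^{2}\,\det(A(g_2)A(g_2^{-1}))$$
is established in \cite{BP} (the Riemann surface analogue of the classical computation), where the square on the middle factor arises because the scalar loop $e^{\chi}$ acts diagonally on the two $\mathbb C^2$ components of $\Omega^{1/2}(S)\otimes\mathbb C^2$. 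The essential structural input is that $\widehat g\mapsto\sigma_0(\widehat g)$ is a holomorphic function of level one on $\widehat LSL(2,\mathbb C)$ descending to the double coset space $H^0(D^*,SL(2,\mathbb C))\backslash \widehat LSL(2,\mathbb C)/H^0(D,SL(2,\mathbb C))$ (Remark (c) following Theorem \ref{U(2)theorem}), so that $\det(A(g)A(g^{-1}))=\sigma_0(\widehat g)\sigma_0(\widehat g^{-1})$ is an ordinary (level zero) holomorphic function on the loop group, and it is multiplicative with respect to factorizations $g=g_-g_0g_+$ into triangular pieces. This is why the three-factor product appears: $g_1^*$ contributes a factor holomorphically extending into $\Sigma^*$, $g_2$ a factor extending into $\Sigma$, and the middle diagonal scalar loop contributes the Toeplitz determinant of the scalar symbol $e^{\chi}$, appearing squared.

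Next I would set up the analytic continuation. Both sides of the claimed identity are, by Theorem \ref{factorizationthm} and Corollary \ref{decomposing}, well-defined holomorphic functions of $g$ (equivalently of the root subgroup parameters $\eta,\zeta$ and $\chi$, via Theorems \ref{SU(2)theorem11} and \ref{SU(2)theorem12}) on the open set of smooth loops admitting the factorization of Theorem \ref{SU(2)theorem12}; one uses that $A(g)$ depends continuously (indeed holomorphically) on the symbol in the relevant operator topology, and that the Fredholm determinants $\det(1-B(g)C(g^{-1}))$ of Theorem \ref{det1} are holomorphic in trace-class perturbations (here one invokes that $B(g)$, $C(g)$ are Hilbert–Schmidt for $C^s$ symbols with $s>1/2$, and the products $B(g)C(g^{-1})$ are trace-class, which is what makes $\det(A(g)A(g^{-1}))$ meaningful). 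One then observes that the locus of unitary loops $g$ with values in $SU(2)$ that admit the factorization — obtained by taking $\eta,\zeta$ arbitrary with $\mathbf a(\eta_k)=(1+|\eta_k|^2)^{-1/2}$ real and $\chi$ purely imaginary — is a totally real submanifold of the complex parameter manifold, of half the real dimension, so that a holomorphic function vanishing on it vanishes identically. Applying this to the difference of the two sides, which vanishes on the unitary locus by the known unitary result, yields the identity on the full open set.

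The main obstacle I anticipate is not the analytic continuation skeleton but the bookkeeping that makes it rigorous in the spin setting: one must verify that the ``smoothing'' remark — that the projection $\Omega^{1/2}\to H^{1/2}$ differs from the classical Hardy projection by a smoothing operator — is strong enough to guarantee (i) that $A(g)$ is Fredholm of index zero and invertible precisely when $g$ has a triangular factorization, in this spin-bundle polarization, and (ii) that the trace-class and holomorphic-dependence properties survive, so that $\det(A(g)A(g^{-1}))$ is genuinely a holomorphic function on the parameter space and not merely a formal symbol. This requires checking that all the estimates used in \cite{BP} for the unitary spin case are purely about the polarization and the symbol's smoothness, hence insensitive to whether $g$ takes values in $SU(2)$ or in $SL(2,\mathbb C)$, and in particular that the exceptional set (where root subgroup coordinates blow up) is a proper analytic subvariety across which one does not attempt to continue. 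Once this is in place, the square on the middle factor and the precise identification of $\dot A(e^{\chi})$ as the scalar compression follow verbatim from the unitary computation, since the diagonal embedding $e^{\chi}\mapsto\mathrm{diag}(e^{\chi},e^{-\chi})$ splits $M_{g}$ into a $2\times2$ block form whose Toeplitz determinant is the square of the scalar one.
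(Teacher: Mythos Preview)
Your proposal is correct and follows essentially the same approach as the paper, which simply asserts that the result ``follows by analytic continuation from the unitary case'' without further elaboration. Your write-up supplies the details the paper omits (holomorphicity of both sides, the totally real submanifold argument, and the spin-polarization bookkeeping), but the underlying strategy is identical.
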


The proof of the following result is essentially the same as in the unitary case, see Theorem 5.3 of \cite{BP}. We include the proof, because we bungled the last line in the proof of Theorem 5.3 of \cite{BP}.

\begin{theorem}\label{SU(2)theorem14} Suppose that $g_1,g_2$ are $L^{\infty}$ multi-loops $S\to SL(2,\mathbb C)$ of the form
$$g_1=\left(\begin{matrix} a_1(z)&b_1(z)\\
\tilde c_1(z)&\tilde d_1(z)\end{matrix} \right)\text{ and } g_2=\left(\begin{matrix} \tilde a_2(z)&\tilde b_2(z)\\
c_2(z)&d_2(z)\end{matrix} \right)
$$
where $a_1,b_1,c_2$ and
$d_2$ are boundary values of holomorphic function in $\Sigma$ (No additional conditions are imposed on $\tilde c_1,\tilde d_1,\tilde a_2,$ and $\tilde b_2$). Then $$A(g_1^*g_2)=A(g_1^*)A(g_2)$$.
\end{theorem}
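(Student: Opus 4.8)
The plan is to exploit the general identity relating block Toeplitz and Hankel operators, namely that for any two loops $f,h$ one has $A(fh)=A(f)A(h)+B(f)C(h)$, where $B(f)=P_+M_fP_-$ and $C(h)=P_-M_hP_+$ (here everything is with respect to the polarization $\Omega^{1/2}(S)\otimes\mathbb C^2=H^{1/2}(\Sigma)\otimes\mathbb C^2\oplus H^{1/2}(\Sigma^*)\otimes\mathbb C^2$). Applying this with $f=g_1^*$ and $h=g_2$, it suffices to show that the correction term $B(g_1^*)C(g_2)=0$ as an operator on $H^{1/2}(\Sigma)\otimes\mathbb C^2$.

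The key structural input is that $g_1$ has its first two columns, i.e. its first \emph{column} entries $a_1,b_1$ wait --- more precisely $g_1$ has the form in which its \emph{first row}... I should be careful: $g_1=\left(\begin{smallmatrix}a_1&b_1\\ \tilde c_1&\tilde d_1\end{smallmatrix}\right)$ with $a_1,b_1$ holomorphic in $\Sigma$, so $g_1^*=\left(\begin{smallmatrix}a_1^*&\tilde c_1^*\\ b_1^*&\tilde d_1^*\end{smallmatrix}\right)$, whose \emph{first column} $(a_1^*,b_1^*)^T$ is holomorphic in $\Sigma^*$. Dually $g_2=\left(\begin{smallmatrix}\tilde a_2&\tilde b_2\\ c_2&d_2\end{smallmatrix}\right)$ has \emph{second row} $(c_2,d_2)$ holomorphic in $\Sigma$. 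First I would compute $C(g_2)$: for a column $\binom{f_1}{f_2}\in H^{1/2}(\Sigma)\otimes\mathbb C^2$, $M_{g_2}\binom{f_1}{f_2}=\binom{\tilde a_2 f_1+\tilde b_2 f_2}{c_2 f_1+d_2 f_2}$, and since $c_2 f_1+d_2 f_2$ is again holomorphic in $\Sigma$ (product of holomorphic spinor-type objects lands in $H^{1/2}(\Sigma)$ up to the smoothing ambiguity --- here one needs the multiplication $H^0(\Sigma)\cdot H^{1/2}(\Sigma)\subset H^{1/2}(\Sigma)$), its $P_-$-component vanishes in the second slot. So $C(g_2)\binom{f_1}{f_2}=P_-\binom{\tilde a_2f_1+\tilde b_2 f_2}{0}$, a vector supported in the first $\mathbb C$-coordinate. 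Then applying $B(g_1^*)=P_+M_{g_1^*}P_-$ to such a vector $\binom{u}{0}$ with $u\in H^{1/2}(\Sigma^*)$: $M_{g_1^*}\binom{u}{0}=\binom{a_1^* u}{b_1^* u}$, and since $a_1^*,b_1^*$ are holomorphic in $\Sigma^*$, both $a_1^*u$ and $b_1^*u$ lie in $H^{1/2}(\Sigma^*)$, so $P_+$ of this is zero. Hence $B(g_1^*)C(g_2)=0$, which gives the claim.

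The main obstacle --- and the only genuinely nontrivial point --- is justifying that the pointwise products behave well with respect to the polarization: i.e. that if $\phi\in H^0(\Sigma)$ and $\psi\in H^{1/2}(\Sigma)$ then $\phi\psi\in H^{1/2}(\Sigma)$, and symmetrically for $\Sigma^*$. In the classical case ($\Sigma=D$, $\mathfrak z(z)=z$) this is the trivial statement that $H^0(\Delta)\cdot H^0(\Delta)\subset H^0(\Delta)$. In the general spin case the projection onto $H^{1/2}(\Sigma)$ differs from a literal "holomorphic part'' projection by a smoothing operator, as noted in the paragraph preceding the theorem; but the relevant fact is simply that multiplication by a function holomorphic in a neighborhood of $\Sigma$ preserves the subspace $H^{1/2}(\Sigma)$ of holomorphic spinors, which is immediate since the product of a holomorphic function and a holomorphic spinor is a holomorphic spinor, with the appropriate $L^2$ boundary behavior preserved because $g_1,g_2\in L^\infty$. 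One small bookkeeping point: one must use that \emph{both} components of the relevant column/row of $g_1^*$ (resp.\ $g_2$) are holomorphic on the correct side --- this is exactly what the hypotheses guarantee and why no conditions on $\tilde c_1,\tilde d_1,\tilde a_2,\tilde b_2$ are needed. I would organize the write-up as: (1) recall $A(fh)=A(f)A(h)+B(f)C(h)$; (2) show $\mathrm{Ran}\,C(g_2)\subset H^{1/2}(\Sigma^*)\otimes(\mathbb C\epsilon_1)$ using holomorphy of the second row of $g_2$; (3) show $B(g_1^*)$ kills $H^{1/2}(\Sigma^*)\otimes(\mathbb C\epsilon_1)$ using holomorphy of the first column of $g_1^*$; (4) conclude $B(g_1^*)C(g_2)=0$.
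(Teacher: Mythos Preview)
Your proof is correct and follows essentially the same route as the paper: reduce to showing $B(g_1^*)C(g_2)=0$ via the identity $A(fh)=A(f)A(h)+B(f)C(h)$, then observe that the holomorphy of the second row of $g_2$ forces the image of $C(g_2)$ into $H^{1/2}(\Sigma^*)\otimes(\mathbb C\epsilon_1)$, and the holomorphy of the first column of $g_1^*$ in $\Sigma^*$ makes $B(g_1^*)$ vanish on that subspace. Your write-up is in fact slightly more careful than the paper's (you make explicit the point that $H^0(\Sigma)\cdot H^{1/2}(\Sigma)\subset H^{1/2}(\Sigma)$, and you have the correct sign $\tilde a_2 f_1+\tilde b_2 f_2$ where the paper has a typo).
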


\begin{proof} Because $$A(g_1^*g_2)=A(g_1^*)A(g_2)+B(g_1^*)C(g_2)$$ this equivalent to showing that
$B(g_1^*)C(g_2)=0$. We will prove this by direct calculation.
Suppose that $f:=\left(\begin{matrix}f_1\\f_2\end{matrix}\right)\in H_+=H^{1/2}(\Delta)$. Then
$$B(g_1^*)C(g_2)f=P_+\left(g_1^*P_-(g_2f)\right)=P_+\left(g_1^*\left(\begin{matrix}P_-(\tilde a_2 f_1-\tilde b_2 f_2)\\0\end{matrix}\right) \right)$$
$$=P_+(\left(\begin{matrix}a_1^*P_-(\tilde a_2 f_1-\tilde b_2 f_2)\\b_1^*P_-(\tilde a_2 f_1-\tilde b_2 f_2)\end{matrix}\right))=0$$
Thus $B(g_1^*)C(g_2)=0$.
\end{proof}

\section{Concluding Comments}\label{concludingcomments}

\subsection{An Alternate Formulation}

In this paper the basic building blocks (complementing diagonal loops) are loops of the form
$$ S^1 \to SL(2,\mathbb C):z \to
\mathbf a(\zeta) \left(\begin{matrix} 1&
\zeta^- z^{-n}\\
\zeta^+ z^n&1\end{matrix} \right)$$
where $\mathbf a(\zeta)=(1-\zeta^-\zeta^+)^{-1/2}$ is a choice of square root.
For $n>0$ this has triangular factorization
$$ \left(\begin{matrix} 1&
\zeta^- z^{-n}\\0&1\end{matrix} \right) \left(\begin{matrix} \mathbf a(\zeta)&0\\
0&\mathbf a(\zeta)^{-1}\end{matrix} \right) \left(\begin{matrix} 1&0\\
\zeta^+ z^n&1\end{matrix} \right)$$
Furthermore the inverse has exactly the same form, with $-(\zeta^+,\zeta^-)$ in place of $(\zeta^-,\zeta^+)$.

The set
$$\{\zeta \in \mathbb C^2: 1-\zeta^-\zeta^+\ne 0\}$$
is connected, but it is not simply connected.  There is a preferred choice of the square root on the `unitary' slice
$\zeta^+=-(\zeta^+)^* $ (the conjugate), namely
$$(1-\zeta^-\zeta^+)^{1/2}=(1+|\zeta^-|^2)^{1/2}>0$$
But this does not uniquely determine the choice of square root by analytic continuation,
and the basic loop above is actually parameterized by a point in a double covering of the set of parameters. This makes the parameter space quite complicated.

An alternative is to remove the square root ambiguity by considering the loop with triangular decomposition
$$ \left(\begin{matrix} 1+\zeta^-\zeta^+&
\zeta^- z^{-n}\\\zeta^+ z^{n}&1\end{matrix} \right) =\left(\begin{matrix} 1&
\zeta^- z^{-n}\\&1\end{matrix} \right) \left(\begin{matrix} 1&\\
\zeta^+ z^n&1\end{matrix} \right)$$
This has the feature that the diagonal is the identity. It has the drawback that when we consider $(\cdot)^{-*}$ of this loop,
the resulting loop has triangular factorization
$$ \left(\begin{matrix} 1+\zeta^-\zeta^+&
\zeta^- z^{-n}\\\zeta^+ z^{n}&1\end{matrix} \right)^{-*} =\left(\begin{matrix} 1&
(-\zeta^+)^* z^{-n}\\&1\end{matrix} \right)\left(\begin{matrix}(1+\zeta^-\zeta^+)^{-*}&\\
&(1+\zeta^-\zeta^+)^{*}\end{matrix} \right) \left(\begin{matrix} 1&\\
(-\zeta^-)^* z^n&1\end{matrix} \right)$$
if and only if $1+\zeta^-\zeta^+\ne 0$.

Let $N^+$ denote the group of loops $u\in H^0(D,SL(2,\mathbb C))$ such that $u(0)$ is unipotent upper triangular,
$N^-$ the group of loops $l\in H^0(D^*,SL(2,\mathbb C))$ such that $l(\infty)$ is unipotent lower triangular, and $c^{\infty}\otimes \mathbb C^2$
the Frechet space of $\mathbb C^2$ valued sequences which are rapidly decreasing.

Suppose that $\eta_{*+1}\in c^{\infty}\otimes \mathbb C^2$, $\chi\in C^{\infty}(S^1,\mathbb C)$, and $\zeta\in c^{\infty}\otimes \mathbb C^2$. It is easy to see that the product of limits
\begin{equation}\label{forwardmap}g=\left(\prod_{0\le i<\infty}^{\leftarrow} \left(\begin{matrix} 1+\eta_i^-\eta_i^+&
\eta_i^+ z^{n}\\\zeta_i^- z^{-n}&1\end{matrix} \right)\right)^*\left(\begin{matrix} exp(\chi)&0\\0&exp(-\chi)\end{matrix} \right)\left(\prod_{1\le k<\infty}^{\leftarrow}\left(\begin{matrix} 1+\zeta_k^-\zeta_k^+&
\zeta_k^- z^{-n}\\\zeta_k^+ z^{n}&1\end{matrix} \right) \right)\end{equation}
exists in $C^{\infty}(S^1,SL(2,\mathbb C))$

\begin{theorem} (a) $\eta,\chi,\zeta \to g$ defines an injective smooth map
$$c^{\infty}\otimes \mathbb C^2 \times \{\chi\in C^{\infty}(S^1,\mathbb C):\chi_0=0\} \times c^{\infty}\otimes \mathbb C^2 \to N^-N^+$$
with dense image; in particular we are asserting that $g$ has a triangular factorization $g=lu$.

(b) $g^{-*}$ has triangular factorization if and only if $1+\eta_i^-\eta_i^+\ne 0$ for all $i$, and $1+\zeta_k^-\zeta_k^+\ne 0$ for all $k$.

(b) Let $l=\left(\begin{matrix}\alpha_1&\beta_1\\\gamma_1&\delta_1\end{matrix}\right)$, $u=\left(\begin{matrix}\alpha_2&\beta_2\\\gamma_2&\delta_2\end{matrix}\right)$, and similarly for $g^{-*}=l'd'u'$ (which exists generically). $\eta_i^{\pm}$ are rational functions of the first $i$ coefficients of the Taylor expansions of $\gamma_1/\alpha_1$ and $\gamma'_1/\alpha'_1$ at $z=\infty$, and $\zeta_k^{\pm}$ are rational functions of the first $k$ coefficients of the Taylor expansions of $\gamma_2/\delta_2$ and $\gamma'_2/\delta'_2$ at $z=0$.
\end{theorem}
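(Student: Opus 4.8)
The plan is to obtain every assertion of the theorem by transporting results already proved for the normalized building blocks, the only change being that the scalar $\mathbf a(\zeta)$ is replaced by the matrix factor whose $(1,1)$ entry is $1+\zeta^-\zeta^+$ (and $(2,2)$ entry $1$); this substitution is exactly why the exceptional quantities become $1+\zeta^-\zeta^+$ rather than $1-\zeta^-\zeta^+$. Throughout I would use the $\sigma$ of (\ref{outer}) to pass between the $\eta$ and $\zeta$ halves, and the fact that $C^\infty$ is a decomposing algebra so that all factors stay smooth.

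For part (a): convergence of the two products in $C^\infty(S^1,SL(2,\mathbb C))$ is the same estimate used for Theorem \ref{smoothSLtheorem1} (rapid decrease of $\eta,\zeta$ forces the tail factors to be $C^\infty$-close to $1$). Writing $g=g_1^*\,\mathrm{diag}(e^\chi,e^{-\chi})\,g_2$ with $g_1,g_2$ the two infinite products, I would first record the analogue of Lemma \ref{keylemma}: the entries of $g_2$ (and of $g_1$) are holomorphic in $\Delta$ with Pauli-exclusion expansions in the $\zeta$ (resp.\ $\eta$), now carrying the weights $1+\zeta_s^-\zeta_s^+$ explicitly; in particular $g_2$ has the matrix shape of (II.1) in Theorem \ref{SU(2)theorem1smooth} with $d_2(0)\neq0$ and the non-simultaneous-vanishing property, and likewise $g_1$ has the shape of (I.1). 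Theorem \ref{maintheorem} (equivalence of (i) and (ii)) then furnishes a triangular factorization $g=lmau$; since $\chi_0=0$ and the unnormalized blocks contribute no diagonal, $m=a=1$, so $g\in N^-N^+$ with $g=lu$. Injectivity and density go as in Section \ref{SU(2)caseII}: uniqueness of the triangular factorization together with the recovery formulas from the proof of Theorem \ref{maintheorem}, adapted to the unnormalized blocks (and the displayed formula (\ref{umatrix})), recovers $\chi$, and then part (b) recovers $\eta,\zeta$; density of the image follows from density of $L_{fin}SL(2,\mathbb C)$ combined with the finite-type version of the first part (b), where the reduction (\ref{reduction}) applies verbatim with $1\pm$ interchanged.

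For the first part (b): if every $1+\eta_i^-\eta_i^+$ and every $1+\zeta_k^-\zeta_k^+$ is nonzero, then each $(\cdot)^{-*}$ of an individual building block has the triangular factorization displayed in Section \ref{concludingcomments}, and running the argument of part (a) on $g^{-*}$ (whose entries again have the shapes (I.1), (II.1)) produces a triangular factorization. Conversely, suppose $1+\eta_{i_0}^-\eta_{i_0}^+=0$ for some $i_0$ (the $\zeta$ case is the $\sigma$-image). Then in the product for $g^{-*}$ the $i_0$-th block degenerates, and one checks that the holomorphic pair of entries of $g^{-*}$ which would have to be independent around $S^1$ and free of common zeros in $\Delta$ in order for $A(g^{-*})$ (or $A_1(g^{-*})$) to be invertible in fact acquires a common zero; by the bracketed Note in the proof of Theorem \ref{SU(2)theorem1smooth} the relevant Toeplitz operator is then non-invertible, so by the uniqueness half of Theorem \ref{factorizationthm} $g^{-*}$ has no triangular factorization. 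Verifying that the exceptional locus is \emph{exactly} $\{\exists i:\,1+\eta_i^-\eta_i^+=0\}\cup\{\exists k:\,1+\zeta_k^-\zeta_k^+=0\}$ — i.e.\ that no other configuration of the parameters can destroy invertibility — is the step I expect to be the main obstacle, and it is the concrete incarnation of the ``exceptional set'' the authors flag as wanting a conceptual explanation.

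For the second part (b): by the part (a) recovery relations the lower row of $u=u(g)$ is $e^{-\chi_+}$ times the lower row of the holomorphic factor of $g_2$, so $\gamma_2/\delta_2$ equals the quotient of those two entries; by the Lemma \ref{keylemma}-analogue for the unnormalized blocks and the exact cancellation argument of Theorem \ref{solvingsubgpcoords} (the telescoping of $\gamma(1-\delta+\delta^2-\cdots)$ governed by the Pauli exclusion principle), the $n$-th Taylor coefficient of $\gamma_2/\delta_2$ at $z=0$ is $\zeta_n^+\prod_{s<n}(1+\zeta_s^-\zeta_s^+)$ plus a polynomial in $\zeta_s^\pm$ with $s<n$. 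Applying the same to $g^{-*}=l'm'a'u'$, whose building blocks are the $(\cdot)^{-*}$ computed in Section \ref{concludingcomments}, the $n$-th coefficient of $\gamma_2'/\delta_2'$ at $z=0$ is a nonzero scalar multiple of $\bar\zeta_n^-$ (again times $\prod_{s<n}(1+\zeta_s^-\zeta_s^+)$ up to conjugation and the accumulated diagonal constants) plus lower-order terms; conjugating and combining with the previous display one solves recursively and rationally for $\zeta_n^+$ and then $\zeta_n^-$ from the first $n$ coefficients of the two quotients. The $\eta$ assertion is the identical computation performed on $l=l(g)$ and $l'=l(g^{-*})$, using $\gamma_1/\alpha_1$ (equivalently $\beta_1^*/\alpha_1^*$ after $\sigma$) expanded at $z=\infty$. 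Apart from the bookkeeping of which entry-quotient is paired with which parameter, the only genuinely new ingredient over Theorems \ref{solvingsubgpcoords} and \ref{maintheorem} is the replacement of the $\mathbf a(\zeta)$ weights by $(1+\zeta^-\zeta^+)$ weights inside the Pauli-exclusion sums, which leaves the cancellation combinatorics untouched.
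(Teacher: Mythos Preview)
The paper states this theorem in Section \ref{concludingcomments} but gives no proof; it is presented as a consequence of the earlier machinery with the normalization $\mathbf a(\zeta)$ removed. Your overall strategy---transport Theorems \ref{SU(2)theorem1smooth}, \ref{SU(2)theorem4smooth}, \ref{solvingsubgpcoords}, and \ref{maintheorem} to the unnormalized blocks---is exactly the intended one, and parts (a) and the second (b) go through essentially as you outline. One small correction in (a): the unnormalized $g_2$ has $d_2(0)=1$ but $a_2^*(\infty)=\prod_k(1+\zeta_k^-\zeta_k^+)$, so it does \emph{not} satisfy the symmetric hypothesis $a_2^*(\infty)=d_2(0)$ of (II.1) in Theorem \ref{SU(2)theorem1smooth}; you must invoke the asymmetric Theorem \ref{SU(2)theorem4smooth} instead.

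There is a genuine gap in the ``only if'' direction of the first (b), and it is not quite where you flagged it. You propose that when some $1+\zeta_{k_0}^-\zeta_{k_0}^+=0$ a holomorphic pair of entries of $g^{-*}$ acquires a common zero in $\Delta$. But $g^{-*}=(g_1^{-*})^*\,\mathrm{diag}(e^{-\chi^*},e^{\chi^*})\,g_2^{-*}$ has no row holomorphic in $\Delta$ in general, so the bracketed Note from the proof of Theorem \ref{SU(2)theorem1smooth} does not apply to $g^{-*}$ directly. The argument must be run on $g_2^{-*}$, and the computation you are missing is explicit: since $a_2^*(\infty)=\prod_k(1+\zeta_k^-\zeta_k^+)$ and $b_2^*(\infty)=0$, the bottom row $(-b_2,a_2)$ of $g_2^{-*}$ satisfies $a_2(0)=\overline{\prod_k(1+\zeta_k^-\zeta_k^+)}$ and $b_2(0)=0$, so it has a common zero at $z=0$ \emph{precisely} when some factor vanishes. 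By Theorem \ref{SU(2)theorem4smooth} this is exactly the obstruction to a triangular factorization of $g_2^{-*}$, and the recovery formulas in the proof of Theorem \ref{maintheorem} show that a triangular factorization of $g^{-*}$ would force one for $g_2^{-*}$. This also dissolves your worry about ``other configurations'': the single product $\prod_k(1+\zeta_k^-\zeta_k^+)$ (and its $\eta$ analogue) is the whole exceptional locus. Finally, the implication you need (triangular factorization $\Rightarrow$ Toeplitz invertibility) is the elementary direction, not what Theorem \ref{factorizationthm} states.
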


\subsection{Toeplitz Determinants}

\begin{theorem} Suppose that
$$g=\left(\prod_{i\ge 0}^{\rightarrow} \left(\begin{matrix}1+\eta_i^-\eta_i^+ & \eta^+ z^{i}\\ \eta^- z^{-i}& 1\end{matrix}\right)\right)^*\left(\begin{matrix}e^{\chi}&0\\0&e^{-\chi}\end{matrix}\right)
\prod_{k>0}^{\leftarrow}\left(\begin{matrix} 1+\zeta^-_k\zeta^+_k&\zeta^-_kz^{-k}\\
\zeta^+_kz^k&1\end{matrix} \right) $$
Then
$$det(A(g)A(g^{-1}))=\left(\prod_{i=0}^{\infty}(1+\eta^-_i\eta^+_i)^{i}\right)^*\times
\left(\prod_{
j=1}^{\infty}e^{2j\chi_j\chi_{-j}}\right)\times
\left(\prod_{k=1}^{\infty}(1+\zeta^-_k\zeta^+_{-k})^{k}\right)$$

\end{theorem}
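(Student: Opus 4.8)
The plan is to reduce to Theorem \ref{det1} by rewriting the new building blocks in terms of the original, square-root-normalized ones and then discarding constant diagonal factors, which do not affect $\det(A(\cdot)A((\cdot)^{-1}))$.

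Write $g=P_\eta^*\,\lambda\,P_\zeta$, where $P_\eta$ and $P_\zeta$ are the two infinite products in the statement and $\lambda=\mathrm{diag}(e^\chi,e^{-\chi})$. First I would record the elementary identity
$$\left(\begin{matrix}1+\zeta^-\zeta^+&\zeta^-z^{-k}\\\zeta^+z^k&1\end{matrix}\right)=\mathbf a(\tilde\zeta)\left(\begin{matrix}1&\tilde\zeta^-z^{-k}\\\tilde\zeta^+z^k&1\end{matrix}\right)\mathrm{diag}\bigl((1+\zeta^-\zeta^+)^{1/2},\,(1+\zeta^-\zeta^+)^{-1/2}\bigr),$$
where $\tilde\zeta^-=\zeta^-$ and $\tilde\zeta^+=\zeta^+/(1+\zeta^-\zeta^+)$, so that $1-\tilde\zeta^-\tilde\zeta^+=(1+\zeta^-\zeta^+)^{-1}$, together with the analogous identity for the $\eta$-blocks (there $\tilde\eta^+=\eta^+$, $\tilde\eta^-=\eta^-/(1+\eta^-\eta^+)$). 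Thus each new block is an ordinary root subgroup block (with parameter $\tilde\zeta$, resp. $\tilde\eta$) followed on the right by a constant diagonal matrix. Pushing such a constant diagonal $\mathrm{diag}(v,v^{-1})$ rightward past an ordinary block rescales its parameters to $(v^2\nu^-,v^{-2}\nu^+)$ --- leaving the product $\nu^-\nu^+$, hence the $\mathbf a$-factor, fixed --- and reappears unchanged on its right. Cascading all the diagonals to the far right gives $P_\zeta=h_\zeta D_\zeta$ and $P_\eta=h_\eta D_\eta$, where $D_\zeta,D_\eta$ are constant diagonal loops and $h_\zeta,h_\eta$ are ordinary root subgroup products whose parameters satisfy $\hat\zeta_k^-\hat\zeta_k^+=\zeta_k^-\zeta_k^+/(1+\zeta_k^-\zeta_k^+)$ and $\hat\eta_i^-\hat\eta_i^+=\eta_i^-\eta_i^+/(1+\eta_i^-\eta_i^+)$.

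Next I would use that, for a constant diagonal loop $D$ and any $h$, $A(hD)A((hD)^{-1})=A(h)A(h^{-1})$ and $A(Dh)A((Dh)^{-1})=\mathcal D\,A(h)A(h^{-1})\,\mathcal D^{-1}$, where $\mathcal D$ is the associated multiplication operator (bounded, boundedly invertible, commuting with $P_\pm$); since Fredholm determinants are similarity invariant and $g=D_\eta^*\,(h_\eta^*\,\lambda\,h_\zeta)\,D_\zeta$, it follows that $\det(A(g)A(g^{-1}))=\det(A(Y)A(Y^{-1}))$ with $Y=h_\eta^*\,\lambda\,h_\zeta$. Now $Y$ has the shape of condition (ii) of Theorem \ref{maintheorem}, hence has a triangular factorization, so the splitting (\ref{det2}) applies to $Y$ and gives
$$\det(A(g)A(g^{-1}))=\overline{\det(A(h_\eta)A(h_\eta^{-1}))}\cdot\det(A(\lambda)A(\lambda^{-1}))\cdot\det(A(h_\zeta)A(h_\zeta^{-1})),$$
the conjugation coming from $A(f^*)=A(f)^*$. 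By Theorem \ref{det1} the outer factors equal $\prod_i(1-\hat\eta_i^-\hat\eta_i^+)^{-i}=\prod_i(1+\eta_i^-\eta_i^+)^{i}$ and $\prod_k(1-\hat\zeta_k^-\hat\zeta_k^+)^{-k}=\prod_k(1+\zeta_k^-\zeta_k^+)^{k}$, while the middle factor is $\det(A(e^\chi)A(e^{-\chi}))^2=\exp\bigl(2\sum_{j\ge1}j\chi_j\chi_{-j}\bigr)$ by the strong Szeg\H{o} limit theorem, since $A(\lambda)$ is block-diagonal. Assembling these gives the formula.

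The main obstacle, I expect, is the passage from finite to infinite products. For finitely many nonzero parameters everything above is purely algebraic. In general one must verify that the rescaled parameters $\hat\eta,\hat\zeta$ are still rapidly decreasing (so Theorem \ref{det1} applies) and that the accumulated diagonals $D_\eta,D_\zeta$ converge; both follow from $1+\eta_i^-\eta_i^+\to1$ and $1+\zeta_k^-\zeta_k^+\to1$ rapidly, which makes the cascade of rescalings converge to a factor uniformly bounded away from $0$ and $\infty$, but this ought to be spelled out --- or else one proves the identity first for finite root subgroup products and then invokes density together with the continuity of $g\mapsto\det(A(g)A(g^{-1}))$ on $C^s(S^1,SL(2,\mathbb C))$, $s>1/2$. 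A lesser point is the appeal to the splitting (\ref{det2}): its proof is a Szeg\H{o}--Widom localization that uses only the opposite holomorphy types of $h_\eta^*$ and $h_\zeta$, is insensitive to the normalization of the building blocks, and so transfers without change.
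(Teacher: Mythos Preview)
The paper does not supply a proof of this final theorem; it is simply stated at the end of the concluding section. So there is nothing to compare your argument against line by line. Judged on its own, your reduction is sound: the identity rewriting each alternate block as an ordinary root-subgroup block times a constant diagonal is correct, the commutation of a constant diagonal past a block rescales the off-diagonal parameters while preserving their product (hence the $\mathbf a$-factor), and constant diagonal loops factor out of $\det(A(\cdot)A((\cdot)^{-1}))$ exactly as you say. After that, Theorem~\ref{det1} together with the splitting (\ref{det2}) gives the formula, and your computation $(1-\hat\zeta_k^-\hat\zeta_k^+)^{-1}=1+\zeta_k^-\zeta_k^+$ is what produces the sign change in the exponents.

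Two remarks. First, your rewriting requires $1+\zeta_k^-\zeta_k^+\ne 0$ and $1+\eta_i^-\eta_i^+\ne 0$ (otherwise $\tilde\zeta^+$ blows up and the square roots are undefined); you should either state this as a standing hypothesis or, as you suggest at the end, prove the identity first for parameters in the open set where these are nonzero and then extend by continuity/analyticity in the finitely many nonzero parameters. Second, and more in the spirit of the paper: every other determinant identity here (Theorems~\ref{det1} and \ref{SU(2)theorem13}) is obtained by analytic continuation from the unitary case, and the same one-line argument works here --- both sides are holomorphic in the parameters $(\eta_i^\pm,\chi_j,\zeta_k^\pm)$ and agree on the real slice $\eta_i^+=-\overline{\eta_i^-}$, $\zeta_k^+=-\overline{\zeta_k^-}$, $\chi$ purely imaginary, where the alternate blocks are $SU(2)$-valued and the formula is (a reparametrization of) part (b) of Theorem~\ref{U(2)theorem}. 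That route sidesteps the square-root and convergence bookkeeping entirely; your approach, on the other hand, has the virtue of making the relation between the two normalizations completely explicit.
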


\end{document}